\newtheorem{theorem}{Theorem}[section]
\newtheorem*{theorem*}{Theorem}
\newtheorem{maintheorem}{Main Theorem}
\newtheorem{lemma}[theorem]{Lemma}
\newtheorem{corollary}[theorem]{Corollary}
\newtheorem{proposition}[theorem]{Proposition}
\theoremstyle{remark}
\newtheorem{remark}[theorem]{Remark}
\newtheorem{example}[theorem]{Example}
\theoremstyle{definition}
\newtheorem{definition}[theorem]{Definition}
\DeclareMathOperator{\SL}{SL}
\DeclareMathOperator{\Lie}{Lie}
\DeclareMathOperator{\Aut}{Aut}
\DeclareMathOperator{\id}{id}
\DeclareMathOperator{\Spec}{Spec}
\DeclareMathOperator{\Hom}{Hom}
\DeclareMathOperator{\rank}{rank}
\DeclareMathOperator{\aff}{aff}
\DeclareMathOperator{\Ve}{Vec}
\DeclareMathOperator{\LND}{LND}
\DeclareMathOperator{\Conv}{Conv}
\DeclareMathOperator{\Span}{Span}
\DeclareMathOperator{\Mor}{Mor}
\DeclareMathOperator{\Der}{Der}
\DeclareMathOperator{\inter}{int}
\newcommand{\norm}[1]{\left\lVert#1\right\rVert}
\newcommand{\OO}{\mathcal{O}}
\newcommand{\GG}{\mathbb{G}}
\newcommand{\RR}{\mathbb{R}}
\newcommand{\PP}{\mathbb{P}}
\newcommand{\ZZ}{\mathbb{Z}}
\renewcommand{\AA}{\mathbb{A}}
\newcommand{\kk}{\textbf{k}}
\newcommand{\aquot}{/ \! \! /}
\newcommand{\name}[1]{#1}
\renewcommand{\phi}{\varphi}
\newcommand{\set}[2]{\left\{\,#1 \ | \ #2\,\right\}}
\newcommand{\Bigset}[2]{\left\{\,#1 \ \Big| \ #2\,\right\}}
\newcommand{\sprod}[2]{\langle #1, #2 \rangle}
\newcommand*{\qrr@gobblenexttocentry}[5]{}
\newcommand*{\qrr@gobblenexttocentry}[4]{}
\newcommand*{\addsubsection}{%
	\addtocontents{toc}{\protect\qrr@gobblenexttocentry}%
	\subsection}
\title[Quasi-affine spherical varieties via the automorphism group]
{Characterizing quasi-affine spherical varieties via the automorphism group}
\author[A. Regeta \and I. van Santen]
{Andriy Regeta \and Immanuel van Santen}
\thanks{}
\address{\noindent Institut f\"{u}r Mathematik, Friedrich-Schiller-Universit\"{a}t Jena, \newline
\indent  Jena 07737, Germany}
\email{andriyregeta@gmail.com}
\address{Departement Mathematik und Informatik, 
Universit\"at Basel,\newline
\indent Spiegelgasse 1, CH-4051 Basel, Switzerland}
\email{immanuel.van.santen@math.ch}
\begin{document}

\subjclass[2010]{14R20, 14M27, 14J50, 22F50}

\maketitle

\begin{abstract}
	Let $G$ be a connected reductive algebraic group.
	In this  note we prove that for a quasi-affine $G$-spherical variety the weight monoid is
	determined by the weights of its non-trivial $\mathbb{G}_a$-actions that are homogeneous with
	respect to a Borel subgroup of $G$. 
	As an application we get that  a smooth affine $G$-spherical variety that is non-isomorphic to a torus 
	is determined by its automorphism group inside the category of smooth affine irreducible varieties.
\end{abstract}

\tableofcontents

\section{Introduction}

%Throughout the whole paper we work over an algebraically closed field $\kk$ of
%characteristic zero. 
In \cite[Theorem~1.1]{Kraft2017}, \name{Kraft} proved 
that $\mathbb{A}^n$ is determined by 
its automorphism group $\Aut(\AA^n)$ seen as an ind-group inside 
the category of connected affine varieties
and in \cite[Main Theorem]{KrReSa2018Is-the-affine-spac}, 
this result was partially generalized in case $\Aut(\AA^n)$ is seen only as an abstract group. 
In \cite[Theomrem~A]{CaReXi2019Families-of-Commut}, 
the last results are widely generalized in the following sense:
$\AA^n$ is completely characterized through the abstract group
$\Aut(\AA^n)$ inside the category of connected affine varieties.
The result of \name{Kraft} was partially generalized to other affine varieties
than the affine space in \cite{Re2017Characterization-o} and \cite{LiReUr2018Characterization-o}. 
More precisely,  there is the following statement: 

\begin{theorem}[{\cite[Theorem 1.4]{LiReUr2018Characterization-o}}]

\label{theorem0}
Let $X$ be an affine toric variety different from the torus
and let $Y$ be an irreducible normal affine variety. If $\Aut(X)$ and $\Aut(Y)$ are isomorphic
as an ind-groups, then $X$ and $Y$ are isomorphic as varieties.  
\end{theorem}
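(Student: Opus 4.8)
The plan is to reconstruct the combinatorial data of the affine toric variety $X$ from the ind-group $\Aut(X)$, transport it through the given isomorphism $\phi\colon\Aut(X)\to\Aut(Y)$, and then recognize $Y$ as the toric variety attached to that data. Write $n=\dim X$ and let $T\simeq(\GG_m)^n$ be the big torus of $X$, with character lattice $M$ and cocharacter lattice $N$, so that $X=\Spec\kk[\sigma^\vee\cap M]$ for a strongly convex full-dimensional cone $\sigma\subset N_\RR$.

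First I would transfer a distinguished torus. Any torus acting faithfully on an irreducible variety has finite generic stabilizer---a positive-dimensional stabilizer would be a subtorus fixing a dense open subset, hence all of the variety---so its dimension is at most $\dim$ of the variety. Thus every torus in $\Aut(X)$ has dimension $\le n$, whence $T$ is a maximal torus. Since $\phi$ is an isomorphism of ind-groups it carries the algebraic subgroup $T$ isomorphically onto an $n$-dimensional (maximal) torus $T':=\phi(T)\subset\Aut(Y)$ acting faithfully on $Y$, and the same dimension bound on the $Y$-side gives $\dim Y\ge n$. The reverse inequality $\dim Y\le n$---equivalently, that $T'$ acts with a dense orbit---is the crux and I return to it below.

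Next I would transport the root subgroups. The $T$-normalized $\GG_a$-subgroups $U\subset\Aut(X)$ are exactly the root subgroups indexed by the Demazure roots of $\sigma$, and each $U=\{u_e(s)\}$ satisfies the homogeneity relation $t\,u_e(s)\,t^{-1}=u_e(e(t)\,s)$ recording its weight $e\in M$. As $\phi$ restricts to an isomorphism $T\simto T'$ and preserves conjugation, it sends these to $T'$-normalized $\GG_a$-subgroups of $\Aut(Y)$ whose weights coincide with the Demazure roots of $X$ under the induced identification $M=\widehat{T}\simeq\widehat{T'}$. Because $X$ is not the torus, $\sigma$ has at least one ray, so the set of Demazure roots is nonempty; by the theory of homogeneous locally nilpotent derivations on toric varieties this set determines $\sigma$, hence $X$, uniquely.

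The main obstacle is the final recognition step: showing that a normal affine variety $Y$ carrying a faithful $(\GG_m)^n$-action together with the prescribed family of homogeneous $\GG_a$-actions (matching the Demazure roots of $X$) must be isomorphic to $X$. Here I would use the homogeneous locally nilpotent derivations $\partial_e\in\LND(\OO(Y))$ produced by the transported root subgroups, together with the $T'$-grading, to pin down the monoid of $T'$-weights occurring in $\OO(Y)$: each $\partial_e$ forces the presence of a full slice of weights, and realizing the complete root pattern of the full-dimensional cone $\sigma$ should force $\OO(Y)^{T'}=\kk$ and $\dim Y=n$, so that $T'$ has a dense orbit and $Y$ is toric with lattice $M$. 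It then remains to identify the weight monoid of $Y$ with $\sigma^\vee\cap M$, which is precisely the combinatorial rigidity statement that the Demazure roots recover the cone, yielding $\OO(Y)\simeq\kk[\sigma^\vee\cap M]=\OO(X)$ and hence $Y\simeq X$. The delicate points I expect to fight with are the well-definedness and saturation of the recovered monoid and, above all, the upper bound $\dim Y\le n$, since a priori the faithful torus action only supplies $\dim Y\ge n$.
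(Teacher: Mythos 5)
Your first two steps are sound and agree with the paper's machinery: a faithful torus action on an irreducible variety forces $\dim T\le\dim X$, so $T$ is a maximal torus, and an isomorphism preserving algebraic subgroups carries root subgroups to root subgroups of the same weight (this is Lemma~\ref{lem.auto_perserves_weights}). The genuine gap is exactly at the step you flag as the crux, and it is conceptual, not technical: the recognition principle you hope for is false. The data you propose to transport --- a faithful $(\GG_m)^n$-action on $Y$ together with homogeneous $\GG_a$-actions realizing the Demazure roots of $\sigma$ --- does not determine $Y$. Take $Y=X\times\AA^1$, let $T$ act on the first factor, and extend each root subgroup $u_e$ of $\Aut(X)$ by the identity on the second factor: then $Y$ is irreducible, normal, affine, carries a faithful $T$-action and $T$-homogeneous $\GG_a$-actions of every Demazure-root weight, yet $\dim Y=n+1$ and $Y\not\simeq X$; in particular $\OO(Y)^{T}\neq\kk$. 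So no analysis of these weights alone can yield $\dim Y\le n$. (Even if you strengthen your hypothesis to the exact equality $D(Y)=D(X)$, you give no argument, and the paper never deduces toricity of $Y$ from $D(Y)$ alone either.)

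The missing idea is to use more of the isomorphism than the torus and the one-dimensional root subgroups, namely that it preserves algebraic subgroups of \emph{arbitrary} dimension. The paper works with \emph{generalized root subgroups}: vector groups $(\GG_a)^m\subset\Aut(X)$ normalized by a connected solvable subgroup $B$ with a single weight. Proposition~\ref{prop:characterization_of_aff_spherical_var} shows that, for connected solvable $B$ containing unipotent elements, $X$ has an open $B$-orbit if and only if the dimensions of all generalized root subgroups with respect to $B$ are bounded; the key direction uses Rosenlicht's theorem to produce an invariant rational function $f_1/f_2$ and, from it, vector groups $(\GG_a)^{m+1}$ of arbitrarily large dimension. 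In the counterexample $Y=X\times\AA^1$ this is precisely what goes wrong: modulating $u_e$ by polynomials in the $\AA^1$-coordinate gives generalized root subgroups of unbounded dimension, which is invisible to $D(Y)$ but obstructs any isomorphism of automorphism groups preserving algebraic subgroups. A second subtlety is that this criterion fails when $B$ is a torus (see the example following Proposition~\ref{prop:characterization_of_aff_spherical_var}), so it cannot be applied to $T$ directly; the paper instead produces a root subgroup $V$ and a codimension-one subtorus $T'\subset T$ such that the solvable group $V\cdot T'$ already has an open orbit on $X$ (Lemma~\ref{lem.open_orbit_of_solvable_group}), transports this group, and concludes $\dim Y\le\dim V+\dim T'=n$ (Proposition~\ref{prop.iso_and_sphericity}). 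Once $Y$ is known to be toric, the second half of your plan is essentially the paper's: $D(X)=D(Y)$ determines the weight monoid (Corollary~\ref{cor.weights_describe_weight-monoid}), and the weight monoid determines the affine toric variety up to equivariant isomorphism (Theorem~\ref{thm.smooth_spherical_var}).
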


\begin{remark}
 In fact, in both \cite{Kraft2017} and \cite[Theorem 1.4]{LiReUr2018Characterization-o}, the
 authors prove the statements under the slightly weaker assumption 
 that there is a group isomorphism $\Aut(X) \simeq \Aut(Y)$ 
 that preserves algebraic subgroups (see Sect.~\ref{sec.AutomorphismGroups} for the definition).
\end{remark}

In this article, we work over an algebraically closed field $\kk$ of characteristic zero.
A natural generalization of toric varieties are the so-called spherical varieties.
Let $G$ be a connected reductive algebraic group.
Recall that a variety $X$ endowed with a faithful $G$-action
is called \emph{$G$-spherical} if  some (and hence every) 
Borel subgroup in $G$ acts on $X$ with an open dense orbit, see e.g. \cite{Ti2011Homogeneous-spaces} for a survey on the topic. If $G$ is a torus, then a $G$-spherical variety is the same thing as a $G$-toric variety.
If $X$ is $G$-spherical, then 
$X$ has an open $G$-orbit which is isomorphic to $G/H$ for some subgroup $H \subset G$.
The family of $G$-spherical varieties is, in a sense,  the widest family of $G$-varieties which is well-studied: in fact, $G$-equivariant open embeddings of $G$-homogeneous 
$G$-spherical varieties are classified
by certain combinatorial data (analogous to the classical case of toric varieties)
by \name{Luna-Vust} \cite{LuVu1983Plongements-despac} (see also the work of \name{Knop} \cite{Kn1991The-Luna-Vust-theo}) 
and homogeneous $G$-spherical varieties are classified for $\kk$ equal to the complex numbers 
by \name{Luna},  \name{Bravi}, \name{Cupit-Foutou}, \name{Losev} and \name{Pezzini} 
\cite{Lu2001Varietes-spherique, BrPe2005Wonderful-varietie, Br2007Wonderful-varietie,
Lu2007La-variete-magnifi, Lo2009Uniqueness-propert, BrCu2010Classification-of-, Cu2014Wonderful-Varietie}.
%
%One of the reasons why affine spherical varieties $X$ are important in representation theory is the following:
%an affine $G$-variety is spherical if and only if the ring of 
%regular functions $\OO(X)$ is a  direct sum of pairwise distinct simple $G$-modules 
%(see \cite[Theorem~2]{ViKi1978Homogeneous-domain} or \cite[Lemma 2.12]{Br2010Introduction-to-ac}).

 In this paper, we  generalize partially Theorem  \ref{theorem0} to quasi-affine $G$-spherical varieties.  
% These varieties have a particularly nice representation theoretic description:
% a quasi-affine $G$-variety is $G$-spherical if and only if the ring
% of regular functions $\OO(X)$ is a  direct sum of pairwise distinct simple $G$-modules
 In order to state our main results, let us introduce some notation. Let
 $X$ be an irreducible $G$-variety for a connected algebraic group $G$ with a fixed Borel subgroup
 $B \subset G$.
 % and a fixed maximal torus $T \subset B$. 
 We denote by $\frak{X}(B)$ the character group
 of $B$, i.e. the group of regular group homomorphisms $B \to \GG_m$.  
 % Note that the character lattices $\frak{X}(T)$ and $\frak{X}(B)$ are naturally identified
 % by restricting characters of $B$ onto $T$.
 The \emph{weight monoid} of $X$ is defined by
 \[
	\Lambda^+(X) = \{ \,\lambda \in \frak{X}(B) \ | \ \OO(X)_{\lambda}^{(B)} \neq 0 \, \}
 \] 
where $\OO(X)_{\lambda}^{(B)} \subset \OO(X)$ 
denotes the subspace of $B$-semi-invariants of weight $\lambda$
of the coordinate ring $\OO(X)$.
% and the \emph{character lattice} of $X$ is defined by
%\[
%	\Lambda(X)  = \{ \,\lambda \in \frak{X}(T) \ | \ K(X)_{\lambda}^{(B)} \neq 0 \, \}
%\]
%where $K(X)_{\lambda}^{(B)}$ denotes the set of $B$-eigenvectors of the function field $K(X)$
%of weight $\lambda$. Up to isomorphism, the weight monoid $\Lambda^+(X)$ and the character lattice
%$\Lambda(X)$ do not depend 
%on the choice of the Borelsubgroup $B$ and the maximal torus $T$ inside $B$.
%In fact, $\Lambda(X)$ is the subgroup of $\frak{X}(T)$ which is generated by 
%$\Lambda^+(X)$, see Corollary~\ref{cor.Character_lattice}.
Our main result of this article is the following:

%Spherical varieties and in particular combinatorics of these varieties got a lot of
%attention during the past few decades (see for example \cite{KnVa2006Classification-of-}, \cite{Kn1996Automorphisms-root},
%\cite{Lo2009Proof-of-the-Knop-} and \cite{PeSt2016Combinatorial-char}).  In \cite{Lo2009Proof-of-the-Knop-} \name{Losev} proved that the  
%weight monoid  $\Lambda^+(X)$ determines a smooth affine spherical variety 
%$X$ up to an isomorphism, i.e. if  two smooth affine spherical $G$-varieties 
%$X$ and $Y$ have the same weight monoid, then $X$ and $Y$ are isomorphic. 
%In the general case, the  situation is more delicate. We ask the following question.

%\begin{question*}\label{spherical}
%	Is a quasi-affine spherical variety determined by its automorphism group?
%	More precisely, let $X$ be a quasi-affine spherical variety and 
%	assume that there exists a normal  quasi-affine variety $Y$ with 
%	$\Aut(Y) \simeq \Aut(X)$. Does this imply $Y \simeq X$?
%\end{question*}

%The notion of a spherical variety has its origin in representation
%theory. In some sense it generalises such objects as multiplicity-free spaces,
%(partial) flag varieties, symmetric spaces and toric varieties.

\begin{maintheorem}
	\label{mainthm:autos}
	Let $X$, $Y$ be irreducible normal quasi-affine varieties, let
	$\theta \colon \Aut(X) \simeq \Aut(Y)$ be a group isomorphism
	that preserves algebraic subgroups (see Sect.~\ref{sec.AutomorphismGroups} for the definition) 
	and let $G$ be a connected reductive 
	algebraic group. 
	Moreover, we fix a Borel subgroup $B \subset G$. 
	%	and a maximal torus $T \subset B$.
	If $X$ is $G$-spherical and not isomorphic to a torus, then the following holds:
	\begin{enumerate}[leftmargin=*]
		\item \label{main1} $Y$ is $G$-spherical for the induced $G$-action via $\theta$;
		\item \label{main2} the weight monoids $\Lambda^+(X)$ and $\Lambda^+(Y)$ inside $\frak{X}(B)$ 
		are the same;
		\item \label{main3} if one of the following assumptions holds
									 \begin{itemize}[leftmargin=*]
									 	\item $X$, $Y$ are smooth and affine or
									 	\item $X$, $Y$ are affine and $G$ is a torus,
									 \end{itemize}
									 then $X$ and $Y$ are isomorphic as $G$-varieties.
	\end{enumerate}
\end{maintheorem}

We prove Main Theorem~\ref{mainthm:autos}\eqref{main1} in Proposition~\ref{prop.iso_and_sphericity},
Main Theorem~\ref{mainthm:autos}\eqref{main2} in Corollary~\ref{thesameweights} 
and Main Theorem~\ref{mainthm:autos}\eqref{main3} in Theorem~\ref{thm.smooth_spherical_var}.

In case $X$ is isomorphic to a torus and $X$ is $G$-spherical, it follows that $G$ is in fact a torus
of dimension $\dim X$, as by assumption $G$ acts faithfully on $X$. Thus $X\simeq G$. 
Then \cite[Example 6.17]{LiReUr2018Characterization-o} gives an example
of an affine variety $Y$ such that there is a group isomorphism $\theta \colon \Aut(X) \to \Aut(Y)$
that preserves algebraic subgroups, but $Y$ is not $G$-toric. Thus the assumption that $X$ is
not isomorphic to a torus in Main Theorem~\ref{mainthm:autos} is essential.

%\begin{remark}
%In  Theorem \ref{mainthm:autos} we assume that $\theta$ preserves algebraic groups. By
%\cite[Theorem 2]{LeRe2017Automorphism-Group}
%this is a weaker condition than the condition that $\theta$ is an isomorphism of ind-groups.
%\end{remark}

%Part~\eqref{main1} follows from Proposition~\ref{prop.iso_and_sphericity} and part~\eqref{main2} 
%follows from Proposition~\ref{prop.weight_monoids}.
%
%Note that the automorphism group cannot determine an affine  variety $X$ in the category of quasi-projective varieties since
%$\Aut(X)$ is isomorphic to $\Aut(X \times C)$, where $C$ is a projective curve without isomorphisms. For example, one can choose $C$ as a generic curve of genus $\ge 3$, see for example \cite{Po00}.
  
%\cite[Main Theorem]{Popp}.
Moreover, in general, we cannot drop the normality condition in Main Theorem~\ref{mainthm:autos}:
We give an example in Lemma~\ref{lem.ex_normality} where the weight monoids of $X$ and $Y$ are different, 
see Sect.~\ref{sec.Counter_example}.

To prove Main Theorem \ref{mainthm:autos}, 
we study so-called generalized root subgroups of $\Aut(X)$ and 
their weights  for a $G$-variety $X$ (see Sect.~\ref{sec.determination_of_sphericity} for details). 
We show that if a Borel subgroup $B$ of $G$ is not a torus, 
then an irreducible normal quasi-affine variety with a faithful $G$-action is
$G$-spherical if and only if the dimension of all 
generalized root subgroups of $\Aut(X)$ with respect to $B$ 
is bounded (see Proposition~\ref{prop:characterization_of_aff_spherical_var}). 

Moreover, we prove that the weight monoid $\Lambda^+(X)$ of a quasi-affine 
$G$-spherical variety $X$ is encoded in 
the set of $B$-weights of non-trivial $B$-homogeneous $\GG_a$-actions:
\[
	D(X) = \Bigset{ \lambda \in \frak{X}(B)}{ 
		\begin{array}{l}
		\textrm{there exists a non-trivial $B$-homogeneous} \\
		\textrm{$\GG_a$-action on $X$ of weight $\lambda$} 
		\end{array}
	}
\]
(see Subsec.~\ref{sec.GroupActionsAndVectorfields} for the definition of a $B$-homogeneous $\GG_a$-action).
To $D(X) \subset \frak{X}(B)$ we may associate its asymptotic cone $D(X)_{\infty}$ inside 
$\frak{X}(B) \otimes_\ZZ \RR$
(see Sect.~\ref{sec.Cones_and_asymptotic_cones} for the definition).
We prove then the following ``closed formula" for the weight monoid:

\begin{maintheorem}
	\label{mainthm:closed_formula}
	Let $G$ be a connected reductive algebraic group, let $B \subset G$ be a Borel subgroup and let
	$X$ be a quasi-affine $G$-spherical variety which is non-isomorphic to a torus.
	If $G$ is not a torus or $\Spec(\OO(X)) \not\simeq \AA^1 \times (\AA^1 \setminus \{0\})^{\dim(X)-1}$, then
	\[
		\Lambda^+(X) = \Conv ( D(X)_{\infty}) \cap \Span_{\ZZ}(D(X))
	\]
	where the asymptotic cones and linear spans are taken inside $\frak{X}(B) \otimes_{\ZZ} \RR$.
\end{maintheorem}

Main Theorem~\ref{mainthm:closed_formula} is proven in Theorem~\ref{thm.description_of_weight-monoid}. Using
this result, we then obtain as a consequence:

\begin{maintheorem}
	Let $G$ be a connected reductive algebraic group and let
	$X, Y$ be quasi-affine $G$-spherical varieties with $D(X) = D(Y)$. Then $\Lambda^+(X) = \Lambda^+(Y)$.
\end{maintheorem}

This is proven in Corollary~\ref{cor.weights_describe_weight-monoid}.

\addsubsection*{Acknowledgements}
The authors would like to thank \name{Michel Brion} for giving us the idea to
study asymptotic cones, which eventually led to a proof of our main result.

\section{Cones and asymptotic cones}
\label{sec.Cones_and_asymptotic_cones}
In the following section we introduce some basic facts about cones and convex sets.
As a reference for cones we take~\cite[\S1.2]{Fu1993Introduction-to-to} and as a 
reference for asymptotic cones we take~\cite[Chp.~2]{AuTe2003Asymptotic-cones-a}.

Throughout this section $V$ denotes a Euclidean vector space, i.e. a finite dimensional $\RR$-vector space 
together with a scalar product
\[
	V \times V \to \RR \, , \quad (u, v) \mapsto \langle u, v \rangle \, .
\]
The induced norm on $V$ we denote by $\norm{\cdot} \colon V \to \RR$.

A subset $C \subset V$ is a \emph{cone} if 
for all $\lambda \in \RR_{\geq 0}$ and for all $c \in C$ we have 
$\lambda \cdot c \in C$. 
The \emph{asymptotic cone} $D_{\infty}$
of a subset $D \subset V$ is defined as follows
\[
D_{\infty} = \left\{ x \in V \ \Big| \  
\begin{array}{l}
\textrm{there exists a sequence $(x_i)_i$ in $D$ with $\norm{x_i} \to \infty$} \\
\textrm{such that $\lim_{i \to \infty} \RR x_i = \RR x$ inside $\PP(V)$} 
\end{array}
\right\}
\]
where $\PP(V)$ denotes the space of lines through the origin in $V$.
The asymptotic cone satisfies the following basic properties, see e.g. 
\cite[Proposition~2.1.1, Proposition~2.1.9]{AuTe2003Asymptotic-cones-a}.
\begin{lemma}[Properties of asymptotic cones] 
	\label{lem.Properties_asymptotic_cone}
	$ $
	\begin{enumerate}[leftmargin=*]
		\item If $D \subset V$, then $D_{\infty} \subset V$
		is a closed cone.
		\item If $C \subset V$ is a closed cone, then $C_{\infty} = C$. 
		\item If $D \subset D' \subset V$, then $D_\infty \subset (D')_{\infty}$.
		\item If $D \subset V$ and $v \in V$, then 
		$(v+D)_{\infty} = D_{\infty}$.
		\item If $D_1, \ldots, D_k \subset V$, then 
		$(D_1 \cup \ldots \cup D_k)_{\infty} =
		(D_1)_{\infty} \cup \ldots \cup (D_k)_{\infty}$.
		%		\item If $D_1, \ldots, D_k \subset V$ are closed convex sets 
		%		and the intersection $D_1 \cap \cdots \cap D_k$ is non-empty, then 
		%		$(D_1 \cap \ldots \cap D_k)_{\infty} =
		%		(D_1)_{\infty} \cap \ldots \cap (D_k)_{\infty}$.
	\end{enumerate}
\end{lemma}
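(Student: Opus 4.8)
The plan is to reduce all five assertions to the standard \emph{ratio form} of the definition. For $x \neq 0$ the stated condition says that the directions $x_i/\norm{x_i}$ tend to that of $x$ while $\norm{x_i} \to \infty$; this is equivalent to requiring that there exist $x_i \in D$ and scalars $t_i \in \RR_{>0}$ with $t_i \to \infty$ and $x_i/t_i \to x$ (take $t_i = \norm{x_i}/\norm{x}$ in one direction, and note that $t_i \to \infty$ together with $x_i/t_i\to x\neq 0$ forces $\norm{x_i}\to\infty$ in the other). I would adopt this ratio description as the working definition throughout, especially since it makes transparent that $0 \in D_\infty$ for every nonempty $D$ (use a constant sequence $x_i \equiv x_0 \in D$ together with $t_i = i$), so that $D_\infty$ is genuinely a cone in the sense required.

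Granting this, (1)--(4) are short sequence manipulations. In (1) the cone property is immediate, since $x_i/t_i \to x$ and $\lambda > 0$ give $x_i/(t_i/\lambda) \to \lambda x$, while $0 \in D_\infty$ was just observed; closedness follows by a diagonal extraction: if $y_n \to y$ with $y_n \in D_\infty$, I would choose for each $n$ a single witness $z_n \in D$ and a scale $s_n \geq n$ with $\norm{z_n/s_n - y_n} \leq 1/n$, so that $z_n/s_n \to y$ and hence $y \in D_\infty$. For (2) the inclusion $C \subseteq C_\infty$ uses the sequence $x_i = i x \in C$ with $t_i = i$, and the reverse inclusion is exactly where closedness of $C$ enters: each $x_i/t_i$ lies in the cone $C$ and converges to $x$, whence $x \in C$. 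Assertion (3) is immediate, a witnessing sequence for $D$ being one for $D'$; and (4) holds because a translation is absorbed in the limit, $(v + x_i)/t_i = v/t_i + x_i/t_i \to x$ as $v/t_i \to 0$, with the reverse inclusion obtained symmetrically by writing elements of $v + D$ as $v + d_i$.

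For (5) the inclusion $\supseteq$ follows at once from the monotonicity (3). The inclusion $\subseteq$ is the only step requiring an idea beyond bookkeeping: given a witnessing sequence $(x_i)$ in $D_1 \cup \cdots \cup D_k$ for a point $x$, the finiteness of the cover lets me invoke the pigeonhole principle to find an index $j$ such that infinitely many $x_i$ lie in $D_j$; passing to that subsequence still witnesses $x$, so $x \in (D_j)_\infty$.

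I do not expect a serious obstacle, as the content is elementary once the definition is in ratio form. The only points demanding care are the diagonal extraction for closedness in (1) and the extraction of a single-set subsequence in (5), the latter being valid precisely because the union is finite. The genuine subtlety lies entirely at the outset---verifying that the given projective formulation matches the ratio description and really yields a cone containing the origin---after which each property drops out directly.
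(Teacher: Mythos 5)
Your proof is correct, but it is worth noting that the paper itself contains no proof of this lemma at all: it simply cites \cite[Propositions~2.1.1 and~2.1.9]{AuTe2003Asymptotic-cones-a}, so any complete argument is necessarily a ``different route''. What you have written is in substance the standard proof from that reference: pass to the ratio form of the definition ($x_i/t_i \to x$ with $t_i \to \infty$, $x_i \in D$), after which (1)--(4) are short sequence manipulations and (5) follows by pigeonhole on the finite union. Your diagonal extraction for closedness in (1), the use of closedness of $C$ in (2), the absorption of the translation in (4), and the subsequence extraction in (5) are all exactly right; the payoff of your approach is that the statement becomes self-contained, whereas the paper's citation only buys brevity.

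One caveat deserves to be flagged rather than labelled an ``equivalence''. Convergence of \emph{lines} $\RR x_i \to \RR x$ in $\PP(V)$ does not see the sign of $x$: for the closed ray $C = \RR_{\geq 0} v$ with $v \neq 0$, the sequence $x_i = i v$ satisfies $\norm{x_i} \to \infty$ and $\RR x_i = \RR(-v)$ for every $i$, so the paper's projective formulation, read literally, would put $-v$ into $C_{\infty}$ and item (2) would be \emph{false}; likewise $x = 0$ is not covered by that formulation at all, since $\RR \cdot 0$ is not a point of $\PP(V)$. Hence the ratio form you adopt is not equivalent to the paper's literal wording (it corresponds to convergence of rays, not of lines); it is, however, the definition actually used in the cited reference and clearly the intended one, since otherwise the lemma fails. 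Your argument therefore proves the intended statement, but you should present the ratio description as a correction of the stated definition, not as a reformulation of it.
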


Moreover, we need the following property:
\begin{lemma}[Asymptotic cone of a $\delta$-neighbourhood]
	\label{lem.Asymptotic_cone_delt_neighb}
	Let $D \subset V$ and let $\delta \in \RR$ with $\delta \geq 0$. Then 
	the $\delta$-neighbourhood of $D$
	\[
		D^{\delta} := \set{x \in V}{\textrm{there is $y \in D$ with $\norm{x-y} \leq \delta$ }}
	\]
	satisfies $(D^{\delta})_{\infty} = D_{\infty}$.
\end{lemma}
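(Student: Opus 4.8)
The plan is to prove that the asymptotic cone is insensitive to thickening by a bounded amount, so $(D^{\delta})_\infty = D_\infty$. The inclusion $D_\infty \subset (D^\delta)_\infty$ is immediate from monotonicity, since $D \subset D^\delta$ gives $D_\infty \subset (D^\delta)_\infty$ by Lemma~\ref{lem.Properties_asymptotic_cone}(3). So the content lies in the reverse inclusion $(D^\delta)_\infty \subset D_\infty$.

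For the reverse inclusion, I would argue directly from the definition of the asymptotic cone. Take $x \in (D^\delta)_\infty$ with $x \neq 0$ (the case $x = 0$ is trivial, since $0$ lies in any asymptotic cone whenever the relevant sequence exists, and more simply $D_\infty$ is a cone so contains $0$). By definition there is a sequence $(x_i)_i$ in $D^\delta$ with $\norm{x_i} \to \infty$ and $\RR x_i \to \RR x$ in $\PP(V)$. By definition of $D^\delta$, for each $i$ choose $y_i \in D$ with $\norm{x_i - y_i} \leq \delta$. The plan is then to show that $(y_i)_i$ witnesses $x \in D_\infty$.

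First I would check $\norm{y_i} \to \infty$: from the triangle inequality $\norm{y_i} \geq \norm{x_i} - \norm{x_i - y_i} \geq \norm{x_i} - \delta$, and since $\norm{x_i} \to \infty$ this forces $\norm{y_i} \to \infty$. Next I would show $\RR y_i \to \RR x$ in $\PP(V)$. The key estimate is that the unit vectors $x_i / \norm{x_i}$ and $y_i / \norm{y_i}$ become arbitrarily close. Writing
\[
	\left\lVert \frac{y_i}{\norm{y_i}} - \frac{x_i}{\norm{x_i}} \right\rVert
	\leq \left\lVert \frac{y_i}{\norm{y_i}} - \frac{y_i}{\norm{x_i}} \right\rVert
		+ \left\lVert \frac{y_i - x_i}{\norm{x_i}} \right\rVert
	= \frac{\big|\,\norm{x_i} - \norm{y_i}\,\big|}{\norm{x_i}}
		+ \frac{\norm{y_i - x_i}}{\norm{x_i}} \, ,
\]
and bounding $\big|\,\norm{x_i} - \norm{y_i}\,\big| \leq \norm{x_i - y_i} \leq \delta$ as well as $\norm{y_i - x_i} \leq \delta$, both terms are at most $\delta / \norm{x_i} \to 0$. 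Hence $y_i / \norm{y_i}$ and $x_i / \norm{x_i}$ have the same limiting direction, so from $\RR x_i \to \RR x$ I conclude $\RR y_i \to \RR x$ in $\PP(V)$. This exhibits $x$ as an element of $D_\infty$ and completes the proof.

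The main (and only mild) obstacle is the convergence of directions in $\PP(V)$ rather than in $V$ itself: one must be careful that the topology on $\PP(V)$ is governed by the unit-vector representatives up to sign, so the estimate above, controlling the distance between normalized vectors, is exactly what is needed to transfer convergence from the sequence $(x_i)$ to $(y_i)$. Everything else is a routine application of the triangle inequality together with the fact that $\norm{x_i} \to \infty$ swamps the fixed bound $\delta$.
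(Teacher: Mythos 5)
Your proof is correct and follows essentially the same route as the paper's: one inclusion by monotonicity from $D \subset D^{\delta}$, and for the reverse inclusion, passing from a sequence $(x_i)$ in $D^{\delta}$ to nearby points $(y_i)$ in $D$, checking $\norm{y_i} \to \infty$, and estimating the distance between the normalized representatives to transfer convergence of lines in $\PP(V)$. The only immaterial difference is the form of the estimate: your two-step triangle inequality gives the bound $2\delta/\norm{x_i}$, while the paper bounds the same quantity by $\delta/\min\{\norm{x_i},\norm{y_i}\}$; both tend to $0$, which is all that is needed.
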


\begin{proof}
	We only have to show that $(D^{\delta})_{\infty} \subset D_{\infty}$. Let $x \in (D^{\delta})_{\infty}$ and let 
	$(x_i)_i$ be a sequence
	in $D^{\delta}$ such that $\norm{x_i} \to \infty$ and $\lim_{i \to \infty} \RR x_i = \RR x$. By definition, 
	there is a sequence $(y_i)_i$ in $D$ such that $\norm{x_i-y_i} \leq \delta$. In particular, we get
	$\norm{y_i} \to \infty$. Let $m_i := \min\{\norm{x_i}, \norm{y_i}\}$. Then $m_i \to \infty$ for $i \to \infty$
	and for big enough $i$ we get
	\[
		0 \leq \norm{\frac{x_i}{\norm{x_i}}-\frac{y_i}{\norm{y_i}}} \leq \frac{\norm{x_i-y_i}}{m_i} \leq \frac{\delta}{m_i} \, .
	\]
	As $\frac{\delta}{m_i} \to \infty$ for $i \to \infty$, the above inequality
	implies $\RR x = \lim_{i \to \infty} \RR x_i = \lim_{i \to \infty} \RR y_i$ inside $\PP(V)$.
\end{proof}

For the next statement, we recall some definitions.
A subset $C \subset V$ is called \emph{convex} if for all
$x, y \in C$ and all $\alpha \in [0, 1]$, we have $\alpha x + (1- \alpha) y \in C$.
A convex cone $C \subset V$  is called \emph{strongly convex}, if
it contains no linear subspace of $V$ except the zero subspace.
For a
subset $D \subset V$, we denote by $\Conv(D)$ the 
\emph{convex cone generated by $D$} in $V$, i.e.
\[
\Conv(D) = \set{\lambda_1 v_1 + \ldots + \lambda_k v_k \in V}{
	\textrm{$v_1, \ldots, v_k \in D$ and $\lambda_1, \ldots, \lambda_k \in \RR_{\geq 0}$}} \, .
\]

A subset $C \subset V$ is a \emph{convex polyhedral cone} if there is a finite subset 
$F \subset V$ such that
\[
C = \Conv F \, .
\]

A hyperplane $H \subset V$ passing through the origin is called a \emph{supporting hyperplane} of a convex polyhedral cone 
$C \subset V$ if
$C$ is contained in one of the closed half spaces in $V$ 
induced by $H$, i.e. there is a normal vector $u \in V$ to $H$
such that
\[
	C \subset \set{x \in V}{\langle u, x \rangle\geq 0} \, .
\]
A \emph{face} of a convex polyhedral cone $C \subset V$ is the intersection of $C$ with a
supporting hyperplane of $C$ in $V$.
A face of dimension one of $C$ is called an \emph{extremal ray of $C$}. 
For a subset $D \subset V$ we denote by $\inter(D)$ the topological interior 
of $D$ inside the linear span of $D$.

\begin{lemma}
	\label{lem.Intersection_with_affine_hyperplane}
	Let $C \subset V$ be a cone and let $H_1$ be an affine hyperplane in $V$
	such that $0 \not\in H_1$. If $C \cap H_1 \neq \varnothing$, then $\inter(C) \cap H_1 \neq \varnothing$.
\end{lemma}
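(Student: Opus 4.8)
The plan is to reduce the whole statement to a one-dimensional rescaling argument, exploiting that the relative interior $\inter(C)$ of a convex cone is again stable under multiplication by strictly positive scalars. Since $0 \notin H_1$, I would first choose a normal vector $u \in V$ and normalize it so that
\[
	H_1 = \set{x \in V}{\sprod{u}{x} = 1} \, .
\]
The key reduction is then: \emph{if} I can produce a single point $q \in \inter(C)$ with $\sprod{u}{q} > 0$, \emph{then} the rescaled point $\tfrac{1}{\sprod{u}{q}}\, q$ still lies in $\inter(C)$ and, by construction, lies on $H_1$, which is all we want. The stability of $\inter(C)$ under positive scaling holds because multiplication by $\tfrac{1}{\sprod{u}{q}} > 0$ is a linear homeomorphism of $\Span(C)$ that maps the cone $C$ onto itself and hence preserves its interior relative to $\Span(C)$.

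So the real content is to find one relative-interior point that pairs positively with $u$, and here I would use the hypothesis $C \cap H_1 \neq \varnothing$. Fix $p \in C \cap H_1$, so $\sprod{u}{p} = 1 > 0$. First, $\inter(C) \neq \varnothing$: the set $C$ is non-empty and convex (it contains $0$ and $p$), and the relative interior of a non-empty convex set in finite dimensions is non-empty. Fix any $q_0 \in \inter(C)$. By the line-segment principle for convex sets, $q_t := (1-t)q_0 + t p \in \inter(C)$ for all $t \in [0,1)$, and along this segment
\[
	\sprod{u}{q_t} = (1-t)\sprod{u}{q_0} + t \longrightarrow 1 \quad \text{as } t \to 1 \, .
\]
Hence for $t$ close enough to $1$ the point $q := q_t$ satisfies $q \in \inter(C)$ and $\sprod{u}{q} > 0$, and rescaling as above completes the proof.

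I do not expect a serious obstacle, but the step that requires care is precisely the production of the positively-paired interior point: the given $p$ may lie on the relative boundary of $C$, and an arbitrary interior point $q_0$ need not satisfy $\sprod{u}{q_0} > 0$. Sliding $q_0$ towards $p$ fixes the sign while remaining in the relative interior, and the cone structure then lets me rescale back onto $H_1$. Finally, convexity of $C$ is essential for this argument (for a non-convex cone, e.g.\ the union of two rays spanning the plane, $\inter(C)$ may be empty while $C \cap H_1 \neq \varnothing$), so I read the statement with $C$ a convex cone, as in the intended applications.
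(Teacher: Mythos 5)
Your proof is correct and follows essentially the same route as the paper's: both produce a point of $\inter(C)$ that pairs positively with the linear functional defining $H_1$ and then rescale it by a positive factor to land on $H_1$ (the paper approximates the given point $c \in C \cap H_1$ by a nearby relative-interior point via continuity of the functional, while you slide from an arbitrary $q_0 \in \inter(C)$ toward $p$ along a segment --- the same underlying fact about convex sets). Your closing caveat is also on target: the paper states the lemma for arbitrary cones, but its own proof implicitly uses convexity (density of $\inter(C)$ in $C$), the statement genuinely fails for the union of two rays spanning a plane, and since the lemma is only ever applied to convex polyhedral cones, your convexity reading is the correct one.
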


\begin{proof}
	Let $\pi \colon V \to \RR$ be a linear map such that $H_1 = \pi^{-1}(1)$.
	By assumption, there is $c \in C \cap H_1$.
	We may assume that $c$ lies in the topological boundary of $C$ inside the linear span of $C$
	(otherwise we are finished). 
	By the continuity 
	of $\pi$, there is $c' \in \inter(C)$ such that $|\pi(c)- \pi(c')| < 1$. As $\pi(c) = 1$, we
	get $\pi(c') > 0$. Then $\lambda = 1 / \pi(c') \in \RR_{> 0}$ and thus 
	$\lambda c' \in \inter(C) \cap H_1$. 
\end{proof}

%Note that any face of a convex polyhedral cone 
%is again a convex polyhedral cone. Moreover, if $C$ is a convex polyhedral cone
%that spans $V$ as a vector space, then 
%\[
%C = \Conv(F_1 \cup \ldots \cup F_r) \, ,
%\]
%where $F_1, \ldots, F_r$ denote the facets of $C$. 

%The following technical lemma will be crucial 
%for our proof.

%If $C \subset V$ is a convex set and $\delta \geq 0$, then the $\delta$-neighbourhood
%$C_{\delta}$ is convex as well. Indeed,
%if $x_1, x_2 \in C_\delta$, then there are $y_1, y_2 \in C$ with $\norm{x_i-y_i} \leq \delta$ for $i= 1, 2$
%and thus if $x = \alpha x_1 + (1-\alpha) x_2$ for some $\alpha \in [0, 1]$, then 
%$y := \alpha y_1 + (1-\alpha) x_2 \in C$ and we have
%\[
%	\norm{x-y} \leq \alpha \norm{x_1-y_1} + (1-\alpha) \norm{x_2-y_2} \leq \delta \, .
%\]

%A convex cone and $C \subset \RR^n$  is called a \emph{strongly convex polyhedral cone}, if
%it contains no linear subspace of $\RR^n$ except the zero subspace.

\begin{lemma}
	\label{lem.Weights_facet_translated}
	Let $C \subset V$ be a convex polyhedral cone and let 
	$H \subset V$ be a hyperplane. Then for each $v \in V$ such that $C \cap (v+H) \neq \varnothing$, we have
	\[
		\left( C \cap (v+H) \right)_{\infty} = C \cap H \, .
	\]
\end{lemma}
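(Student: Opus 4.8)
The plan is to establish the two inclusions separately, treating $P := C \cap (v+H)$ simply as a convex set contained in both the closed cone $C$ and the affine hyperplane $v+H$. Note first that the left-hand side $(C \cap (v+H))_{\infty}$ is always a cone by Lemma~\ref{lem.Properties_asymptotic_cone}(1), so the right-hand side must be one too; this forces $H$ to be a \emph{linear} hyperplane (through the origin), and I work under this convention throughout.

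For the inclusion $P_{\infty} \subseteq C \cap H$, I would invoke only the formal properties of the asymptotic cone. From $P \subseteq C$ and the fact that a convex polyhedral cone is a closed cone, monotonicity and idempotency (Lemma~\ref{lem.Properties_asymptotic_cone}(3) and~(2)) give $P_{\infty} \subseteq C_{\infty} = C$. From $P \subseteq v+H$, monotonicity together with translation-invariance (Lemma~\ref{lem.Properties_asymptotic_cone}(4)) and the fact that a linear subspace is a closed cone give $P_{\infty} \subseteq (v+H)_{\infty} = H_{\infty} = H$. Intersecting the two conclusions yields $P_{\infty} \subseteq C \cap H$, and this direction needs neither polyhedrality nor the hypothesis $P \neq \varnothing$.

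For the reverse inclusion $C \cap H \subseteq P_{\infty}$, I would argue by an explicit construction. Using $C \cap (v+H) \neq \varnothing$, fix a point $p \in P$ and let $x \in C \cap H$. The case $x = 0$ is immediate since $P_{\infty}$ is a cone and $P \neq \varnothing$. For $x \neq 0$, consider the ray $x_i := p + i\,x$ with $i \in \mathbb{N}$. Because $C$ is a convex cone it is closed under addition and nonnegative scalar multiplication, so $x_i \in C$; and because $p - v \in H$ and $x \in H$ with $H$ linear, we have $x_i - v \in H$, i.e. $x_i \in v+H$. Hence $x_i \in P$ for every $i$. Finally $\norm{x_i} \to \infty$ and $x_i / i \to x$ show that $\lim_{i \to \infty} \RR x_i = \RR x$ in $\PP(V)$, so $x \in P_{\infty}$.

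The argument is essentially formal, so there is no serious obstacle; the only points demanding attention are the two structural hypotheses hidden in the reverse inclusion: the ray stays inside $C$ precisely because $C$ is a convex cone, and translating by $v$ preserves membership in $v+H$ precisely because $H$ is linear. I would also record the degenerate case $x = 0$ explicitly, since the stated definition of $D_{\infty}$ only refers to nonzero directions, and the membership $0 \in P_{\infty}$ is exactly what makes $P_{\infty}$ a genuine closed cone.
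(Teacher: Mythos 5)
Your proof is correct and takes essentially the same approach as the paper: the inclusion $(C \cap (v+H))_{\infty} \subseteq C \cap H$ is the identical formal computation via Lemma~\ref{lem.Properties_asymptotic_cone}, and the reverse inclusion rests on the same key observation that adding a fixed point $p \in C \cap (v+H)$ to elements of $C \cap H$ stays inside $C \cap (v+H)$. The only cosmetic difference is that you certify membership in the asymptotic cone by the explicit sequence $p + i\,x$, whereas the paper deduces it from $p + (C \cap H) \subseteq C \cap (v+H)$ using translation-invariance, monotonicity, and idempotency of $(\cdot)_{\infty}$ on the closed cone $C \cap H$.
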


\begin{proof}	
	We denote $D := C \cap (v+H) \subset V$. As 
	$D \neq \varnothing$ we can take $x \in D$. If $y \in C \cap H$, then
	$x+y \in C$ and $x+y \in v + H$, thus $x+y \in D$. This shows that
	$x + (C \cap H) \subset D$ and by Lemma~\ref{lem.Properties_asymptotic_cone}, we get $C \cap H \subset D_\infty$.
	Now, from Lemma~\ref{lem.Properties_asymptotic_cone} we get also the reverse inclusion:
	\[
		D_\infty = (C \cap (v+H))_\infty \subset C_\infty \cap (v+H)_\infty
		= C_\infty \cap H_{\infty} = C \cap H \, .
	\]
\end{proof}

For a fixed lattice $\Lambda \subset V$ (i.e. a finitely generated subgroup), 
a convex polyhedral cone $C \subset V$
is called \emph{rational (with respect to $\Lambda$)}, if each extremal ray of $C$ is generated 
by some element from $C \cap \Lambda$. Note that a face of a rational convex polyhedral cone
is again a rational convex polyhedral cone, see \cite[Proposition~2]{Fu1993Introduction-to-to}.

%A hyperplane $H \subset V$ is called
%\emph{rational (with respect to $\Lambda$)} if $H$ has a basis consisting of elements in $\Lambda$.

\begin{lemma}
	\label{lem.Weights_lattice}
	Let $\Lambda \subset V$ be a lattice. If $C \subset V$ is a rational convex
	polyhedral cone (with respect to $\Lambda$), then 
	\[
		C = (C \cap \Lambda)_{\infty} \, .
	\] 
\end{lemma}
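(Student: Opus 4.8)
The plan is to prove the two inclusions separately, using the properties of asymptotic cones collected in Lemma~\ref{lem.Properties_asymptotic_cone} together with the $\delta$-neighbourhood Lemma~\ref{lem.Asymptotic_cone_delt_neighb}. The inclusion $(C \cap \Lambda)_{\infty} \subseteq C$ is immediate: since $C \cap \Lambda \subseteq C$, monotonicity gives $(C \cap \Lambda)_{\infty} \subseteq C_{\infty}$, and $C$ is a closed cone, so $C_{\infty} = C$ by Lemma~\ref{lem.Properties_asymptotic_cone}. The content of the lemma is therefore the reverse inclusion $C \subseteq (C \cap \Lambda)_{\infty}$.

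For this I would first use rationality to write $C = \Conv\{v_1, \ldots, v_k\}$ for finitely many lattice vectors $v_1, \ldots, v_k \in C \cap \Lambda$. The key observation is then that $C \cap \Lambda$ is \emph{uniformly} close to every point of $C$. Setting $\delta := \max\{\,\norm{t_1 v_1 + \cdots + t_k v_k} : t_1, \ldots, t_k \in [0,1]\,\}$, which is finite by compactness and satisfies $\delta \geq 0$, I split an arbitrary $x = \sum_j \lambda_j v_j \in C$ (with $\lambda_j \geq 0$) as $x = \ell + r$, where $\ell := \sum_j \lfloor \lambda_j \rfloor v_j$ and $r := \sum_j (\lambda_j - \lfloor \lambda_j \rfloor) v_j$. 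Here $\ell$ is a nonnegative integer combination of the $v_j$, hence $\ell \in C \cap \Lambda$, while $\norm{r} \leq \delta$; thus $x$ lies in the $\delta$-neighbourhood $(C \cap \Lambda)^{\delta}$. This proves $C \subseteq (C \cap \Lambda)^{\delta}$, and passing to asymptotic cones via monotonicity, the equality $C_\infty = C$, and Lemma~\ref{lem.Asymptotic_cone_delt_neighb}, I obtain $C = C_{\infty} \subseteq \bigl((C \cap \Lambda)^{\delta}\bigr)_{\infty} = (C \cap \Lambda)_{\infty}$, as desired. Note that this argument is uniform in $x$ and in particular subsumes the degenerate case $C = \{0\}$.

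The step I expect to be the main obstacle is producing the finite lattice generating set, i.e. establishing $C = \Conv\{v_1, \ldots, v_k\}$ with $v_j \in C \cap \Lambda$ from the hypothesis that $C$ is rational. For a pointed (strongly convex) cone this is standard structure theory: $C$ is generated by its extremal rays, each of which is spanned by an element of $C \cap \Lambda$ by the definition of rationality. To handle the general case I would decompose $C = L \oplus C_0$, where $L = C \cap (-C)$ is the lineality space and $C_0$ is a pointed rational cone in a complementary subspace; one then verifies that $L$ is spanned by vectors of $L \cap \Lambda$, so that $L$ is generated as a cone by finitely many lattice vectors together with their negatives, and combines these with lattice generators of the extremal rays of $C_0$. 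Once such a finite lattice generating set is in hand, the approximation argument above applies verbatim and completes the proof.
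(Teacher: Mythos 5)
Your core argument is exactly the paper's proof: the paper likewise picks lattice generators $v_1,\ldots,v_r$ of the extremal rays, bounds the ``fractional part'' by the maximum $\delta$ of the norm over the compact set $K=\bigset{\sum_i t_i v_i}{0\le t_i\le 1}$, splits an arbitrary $c\in C$ into a nonnegative integer combination plus an element of $K$, deduces $C\cap\Lambda\subset C\subset (C\cap\Lambda)^\delta$, and concludes via Lemma~\ref{lem.Properties_asymptotic_cone} and Lemma~\ref{lem.Asymptotic_cone_delt_neighb}. For cones that are generated by their extremal rays (in particular strongly convex ones), your proposal and the paper's proof coincide and are correct.

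Where you go beyond the paper is the non-pointed case, and there your plan has a genuine gap. You propose to decompose $C = L \oplus C_0$ with $L = C\cap(-C)$ the lineality space and then to ``verify that $L$ is spanned by vectors of $L\cap\Lambda$''. With the paper's definition of rationality this verification is impossible, because the hypothesis says nothing about $L$: every face of a convex cone contains its lineality space, so if $\dim L\ge 2$ then $C$ has no faces of dimension one at all, hence no extremal rays, and $C$ is vacuously ``rational''. Concretely, take $V=\RR^3$, $\Lambda=\ZZ^3$ and $C$ the plane $\set{x\in\RR^3}{x_3=\sqrt{2}\,x_1+\sqrt{3}\,x_2}$; this is a convex polyhedral cone (it is $\Conv$ of four vectors), it is rational in the paper's sense for trivial reasons, but $C\cap\Lambda=\{0\}$, so $(C\cap\Lambda)_\infty\neq C$. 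Thus the statement, read literally with the paper's definition, is false for non-pointed cones, and no completion of your sketch can exist. To be fair, the paper's own proof carries the same implicit restriction --- writing $c=\sum_i m_i v_i+\sum_i t_i v_i$ presupposes that the extremal-ray generators generate $C$, i.e.\ that $C$ is pointed --- so your proof is on an equal footing with the paper's; the defect lies in the stated definition, not in your approximation argument. The clean repair, which also makes your lineality-space decomposition go through, is to take ``rational'' in the stronger standard sense that $C=\Conv(F)$ for a finite set $F\subset\Lambda$; that is the property which actually holds for the cones (such as $\sigma^\vee\cap\rho^\bot$, which need not be pointed) to which the paper later applies this lemma.
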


\begin{proof}
	Let $v_1, \ldots, v_r \in C \cap \Lambda$ such that $\RR_{\geq 0} v_1, \ldots,\RR_{\geq 0} v_r$
	are the extremal rays of $C$. Then 
	\[
		K:=\Bigset{\sum_{i=1}^r t_i v_i \in V}{\textrm{$0 \leq t_i \leq 1$ for $i=1, \ldots, r$}}
	\]
	is a compact subset of $C$. In particular, there is a real number $\delta \geq 0$ such that
	$\norm{v} \leq \delta$ for all $v \in K$. Now, let $c \in C$. Then there exist
	$m_1, \ldots, m_r \in \ZZ_{\geq 0}$ and $0 \leq t_1, \ldots, t_r \leq 1$ such that
	\[
		c = \underbrace{\sum_{i=1}^r m_i v_i}_{\in C \cap \Lambda} + 
		\underbrace{\left(\sum_{i=1}^r t_i v_i \right)}_{\in K} \, .
	\]
	This shows that $c$ is contained in the $\delta$-neighbourhood $(C \cap \Lambda)^\delta$. In 
	summary we get $C \cap \Lambda \subset C \subset (C \cap \Lambda)^\delta$ and by
	using Lemma~\ref{lem.Properties_asymptotic_cone} and~\ref{lem.Asymptotic_cone_delt_neighb}
	the statement follows.
\end{proof}

\begin{proposition}
	\label{prop.Weights_facet}
	Let $\Lambda \subset V$ be a lattice, let $C \subset V$ be a convex polyhedral cone
	and let $H \subset V$ be a hyperplane such that $H \cap \Lambda$ has rank $\dim H$ and 
	$C \cap H$ is rational 
	with respect to $\Lambda$. Moreover, let $H' := \gamma + H$ for some 
	$\gamma \in \Lambda \setminus H$.
	\begin{enumerate}[leftmargin=*]
		\item 
		\label{prop.Weights_facet1}
		If $C \cap H' \neq \varnothing$ and $C \cap H$ has dimension $\dim H$, then 
		$\inter(C) \cap H' \cap \Lambda \neq \varnothing$.
		\item 
		\label{prop.Weights_facet2}
		If $\inter(C) \cap H' \cap \Lambda$ is non-empty, then 
		\[
			C \cap H = (\inter(C) \cap H' \cap \Lambda)_{\infty} \, .
		\]
	\end{enumerate}
\end{proposition}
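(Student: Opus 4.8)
The plan is to prove the two parts separately, with a common engine being the elementary observation that for a convex cone $C$ one has $\inter(C) + C \subseteq \inter(C)$. Indeed, for $\lambda > 0$ the inclusions $\lambda C \subseteq C$ and $\lambda^{-1} C \subseteq C$ force $\lambda C = C$, so scaling by $\lambda$ is a self-homeomorphism of $C$ preserving $\inter(C)$; thus if $x \in \inter(C)$ and $y \in C$, then $\tfrac12 x + \tfrac12 y \in \inter(C)$ by convexity, and hence $x + y = 2\big(\tfrac12 x + \tfrac12 y\big) \in 2\inter(C) = \inter(C)$. I would also record at the outset that $C \cap H$ contains a lattice point in its relative interior: since $\dim(C \cap H) = \dim H$ (which holds in part~\eqref{prop.Weights_facet1} and may be assumed in part~\eqref{prop.Weights_facet2}), the relative interior $\inter(C \cap H)$ is a non-empty open cone in $H$, and as $H \cap \Lambda$ has full rank $\dim H$ it meets $\Lambda$; concretely, choose a rational point of $\inter(C \cap H)$ and clear denominators, using that $\inter(C \cap H)$ is stable under positive scaling, to obtain $w \in \inter(C \cap H) \cap \Lambda$.

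For part~\eqref{prop.Weights_facet1}, since $\gamma \notin H$ we have $0 \notin H'$, so Lemma~\ref{lem.Intersection_with_affine_hyperplane} applied to the affine hyperplane $H'$ produces a point $p \in \inter(C) \cap H'$; write $p = \gamma + h_0$ with $h_0 \in H$. I claim that the lattice points $q_n := \gamma + n w \in H' \cap \Lambda$ lie in $\inter(C)$ for all large $n$. Writing $q_n = p + (n w - h_0)$, it suffices to check that $n w - h_0 \in C$, for then $q_n \in \inter(C) + C \subseteq \inter(C)$. Since $w \in \inter(C \cap H)$ there is $\varepsilon > 0$ with $\{\, w + h \mid h \in H,\ \norm{h} < \varepsilon \,\} \subseteq C \cap H$; hence for $n > \norm{h_0}/\varepsilon$ the vector $w - h_0/n$ lies in $C \cap H$, and scaling by $n$ gives $n w - h_0 \in C$. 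Thus $q_n \in \inter(C) \cap H' \cap \Lambda$ for large $n$, which is the assertion.

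For part~\eqref{prop.Weights_facet2}, set $S := \inter(C) \cap H' \cap \Lambda$, non-empty by hypothesis. The inclusion $S_\infty \subseteq C \cap H$ is immediate: from $S \subseteq C \cap H'$ and Lemma~\ref{lem.Properties_asymptotic_cone} one gets $S_\infty \subseteq (C \cap H')_\infty$, and the latter equals $C \cap H$ by Lemma~\ref{lem.Weights_facet_translated} (applied with $v = \gamma$). For the reverse inclusion I would fix a point $s_0 \in S$ and observe that $s_0 + \big((C \cap H) \cap \Lambda\big) \subseteq S$: each such translate lies in $\Lambda$, lies in $H' = \gamma + H$ because the added vector lies in $H$, and lies in $\inter(C)$ by the cone property $\inter(C) + C \subseteq \inter(C)$. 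Combining monotonicity and translation invariance (Lemma~\ref{lem.Properties_asymptotic_cone}) with Lemma~\ref{lem.Weights_lattice} for the rational cone $C \cap H$ then yields
\[
	C \cap H = \big((C \cap H) \cap \Lambda\big)_\infty = \big(s_0 + (C \cap H) \cap \Lambda\big)_\infty \subseteq S_\infty \, ,
\]
which together with the first inclusion completes the proof.

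The main obstacle is the passage from an open condition to an actual lattice point in part~\eqref{prop.Weights_facet1}: the set $\inter(C) \cap H'$ is only guaranteed to be a non-empty relatively open subset of $H'$, so a priori there is no reason for it to meet $\Lambda$. The resolution is the explicit sequence $\gamma + n w$ marching to infinity along the full-dimensional interior direction $w$ of $C \cap H$, where the hypothesis $\dim(C \cap H) = \dim H$ is precisely what keeps the points in $\inter(C)$.
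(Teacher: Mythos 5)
Your proposal is correct, and while your part~\eqref{prop.Weights_facet2} coincides with the paper's argument (translate $C \cap H \cap \Lambda$ by a point of $\inter(C) \cap H' \cap \Lambda$, then combine Lemma~\ref{lem.Weights_lattice} with Lemma~\ref{lem.Weights_facet_translated} and the basic properties of asymptotic cones from Lemma~\ref{lem.Properties_asymptotic_cone}), your part~\eqref{prop.Weights_facet1} takes a genuinely different route. The paper also starts from a point $x \in \inter(C) \cap H'$ supplied by Lemma~\ref{lem.Intersection_with_affine_hyperplane}, but then forms $D = x + (\inter(C \cap H) \setminus \{0\})$, radially projects $H' \setminus \{x\}$ onto the unit sphere $S$ of $H$ via $\pi(w) = (w-x)/\norm{w-x}$, observes that $\pi(D)$ is open and $\pi$-saturated, and appeals to the density in $S$ of the directions of the points of $H' \cap \Lambda = \gamma + (H \cap \Lambda)$; an open set of directions must then contain a lattice direction, whose preimage is a lattice point in $D$. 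You instead produce the lattice point explicitly: pick $w \in \inter(C \cap H) \cap \Lambda$ (density of $\mathbb{Q}$-points of the full-rank lattice $H \cap \Lambda$ in $H$, plus invariance of the open cone $\inter(C \cap H)$ under positive scaling to clear denominators), and verify $\gamma + nw = p + (nw - h_0) \in \inter(C)$ for $n \gg 0$ via the quantitative estimate $w - h_0/n \in C \cap H$ together with the absorption property $\inter(C) + C \subseteq \inter(C)$, whose short proof you correctly supply. What each approach buys: the paper's density-of-directions claim is asserted without proof and is its least elementary ingredient (it is essentially the density of lattice directions on the sphere), whereas your argument is self-contained and constructive, needing only density of rational points and an $\varepsilon$-ball; the paper's projection argument in exchange shows lattice points are abundant in $\inter(C) \cap H'$ (every saturated open set of directions contains one), though the proposition needs only a single point. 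One blemish: your preliminary remark that $\dim(C \cap H) = \dim H$ ``may be assumed in part~\eqref{prop.Weights_facet2}'' is unjustified --- the hypothesis $\inter(C) \cap H' \cap \Lambda \neq \varnothing$ does not force it (take $C$ a ray through $\gamma$ meeting $H$ only in $0$, so that $C \cap H = \{0\}$ while $\gamma \in \inter(C) \cap H' \cap \Lambda$) --- but this is inert, since your part~\eqref{prop.Weights_facet2} argument never uses the recorded observation and needs only the rationality of $C \cap H$, which is a standing hypothesis.
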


\begin{proof}
	\eqref{prop.Weights_facet1}
	As $\gamma \not\in H$, we get $0 \not\in H'$.
	Since $C \cap H' \neq \varnothing$ there is thus $x \in \inter(C) \cap H'$
	by Lemma~\ref{lem.Intersection_with_affine_hyperplane}. 
	As $\dim(C \cap H) = \dim H$, the linear
	span of $C \cap H$ is $H$ and we get that
	$\inter(C \cap H)$ is a non-empty open subset of $H$. Let
	\[
		D := x + (\inter(C\cap H) \setminus \{0\}) \subset \inter(C) \cap H' \setminus \{ x \} \, .
	\]
	Denote by $S$ the unit sphere in $H$ with centre $0$ and consider
	\[
		\pi \colon H' \setminus \{x\} \to S \, , \quad w \mapsto \frac{w-x}{\norm{w-x}} \, .
	\]
	For all $h \in \inter(C \cap H)$ we have $\RR_{> 0}h \subset \inter(C \cap H)$ and thus 
	$\pi^{-1}(\pi(D)) = D$. 
	Since $D$ is a non-empty open subset of $H' \setminus \{x\}$, 
	the same is true
	for $\pi(D)$ in $S$. Since 
	$\gamma \in \Lambda$ and $H' = \gamma + H$, it follows that 
	$H' \cap \Lambda = \gamma + (H \cap \Lambda)$. Since the rank of $H \cap \Lambda$
	is $\dim H$, we get that
	$\pi((H' \cap \Lambda) \setminus \{ x\})$ is dense in $S$. Since $\pi(D)$ is open in $S$, 
	there is $\lambda \in (H' \cap \Lambda) \setminus \{x\} \subset \Lambda$
	with $\pi(\lambda) \in \pi(D)$. 
	In particular, $\lambda \in \pi^{-1}(\pi(D)) = D$ and thus
	$\lambda \in D \cap \Lambda \subset \inter(C) \cap H' \cap \Lambda$.
	
	\eqref{prop.Weights_facet2} By assumption, there is $y \in \inter(C) \cap H' \cap \Lambda$. Thus
	we get
	\[
		y + (C \cap H \cap \Lambda) \subset \inter(C) \cap H' \cap \Lambda \, .
	\]
	This implies by Lemma~\ref{lem.Properties_asymptotic_cone}
	\[
			\label{Eq:1}
			\tag{$a$}
			(C \cap H \cap \Lambda)_{\infty} \subset (\inter(C) \cap H' \cap \Lambda)_{\infty}
			\subset (C \cap H')_{\infty} \, .
	\]
	By Lemma~\ref{lem.Weights_lattice} applied to the rational convex polyhedral cone
	$C \cap H \subset V$ we get
	\[
			\label{Eq:2}
			\tag{$b$}
			C \cap H = (C \cap H \cap \Lambda)_{\infty} \, .
	\] 
	By Lemma~\ref{lem.Weights_facet_translated} we get
	\[
			\label{Eq:3}
			\tag{$c$}
			(C \cap H')_{\infty} = C \cap H \, .
	\]
	Combining~\eqref{Eq:1},~\eqref{Eq:2} and~\eqref{Eq:3} yields the result.
\end{proof}

\begin{proposition}
	\label{prop.help_lemma_weights}
	Let $\Lambda \subset V$ be a lattice of rank $\dim V$, $C \subset V$ a rational 
	convex poly\-hedral cone and $H_0 \subset V$ a hyperplane such that
	$C \cap H_0$ is rational. Let $H_1 \subset V$ be an affine hyperplane which is parallel to $H_0$ and
	set $H_{-1} := -H_1$. 
	If $C \cap H_i \neq \varnothing$ for each $i \in \{\pm1 \}$, then
	\[
		C \cap H_{-1} \cap \Lambda \neq \varnothing \quad \iff \quad
		C \cap H_1 \cap \Lambda \neq \varnothing \, .		
	\]
\end{proposition}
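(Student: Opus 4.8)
The plan is to reduce to the case where $C$ is full-dimensional, observe that under this reduction the hypotheses force the cutting hyperplane to become rational, and then invoke Proposition~\ref{prop.Weights_facet}\eqref{prop.Weights_facet1} twice, once in each direction, exploiting the trivial symmetry $\gamma \mapsto -\gamma$ of the lattice. Concretely, I would first fix a linear functional $\pi \colon V \to \RR$ with $H_0 = \pi^{-1}(0)$, so that $H_1 = \pi^{-1}(c)$ and $H_{-1} = \pi^{-1}(-c)$ for some $c \neq 0$. Since $C$ is a rational cone, its linear span $V' := \Span(C)$ is a rational subspace and $\Lambda \cap V'$ is a lattice of rank $\dim V'$; moreover $\pi$ does not vanish identically on $V'$ (otherwise $C \subset H_0$, contradicting $C \cap H_1 \neq \varnothing$). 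Hence $H_i \cap V'$ are parallel affine hyperplanes in $V'$ with $C \cap H_i = C \cap (H_i \cap V')$, and I may replace $V$, $\Lambda$ by $V'$, $\Lambda \cap V'$ to assume that $C$ is full-dimensional.

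The key observation is that, once $C$ is full-dimensional, the assumption $C \cap H_1 \neq \varnothing$ and $C \cap H_{-1} \neq \varnothing$ forces $C \cap H_0$ to be full-dimensional in $H_0$. Indeed, $\pi$ then attains both a positive and a negative value on $C$, hence on the open convex set $\inter(C)$; by convexity $\pi$ also attains the value $0$ on $\inter(C)$, so $\inter(C) \cap H_0 \neq \varnothing$ and therefore $\dim(C \cap H_0) = \dim H_0$. Since $C \cap H_0$ is rational by hypothesis and now full-dimensional, its linear span, which is all of $H_0$, is a rational subspace; consequently $H_0 \cap \Lambda$ has rank $\dim H_0$. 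This is exactly the rationality input required to apply Proposition~\ref{prop.Weights_facet}.

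With these facts in place the two implications become symmetric. Suppose $C \cap H_1 \cap \Lambda \neq \varnothing$ and pick $\lambda$ in this set; then $\gamma := -\lambda \in \Lambda$ satisfies $\pi(\gamma) = -c \neq 0$, so $\gamma \in \Lambda \setminus H_0$ and $H_{-1} = \gamma + H_0$. Applying Proposition~\ref{prop.Weights_facet}\eqref{prop.Weights_facet1} with $H := H_0$ and $H' := H_{-1}$ — legitimate since $H_0 \cap \Lambda$ has rank $\dim H_0$, $C \cap H_0$ is rational of dimension $\dim H_0$, and $C \cap H_{-1} \neq \varnothing$ by assumption — yields $\inter(C) \cap H_{-1} \cap \Lambda \neq \varnothing$, so in particular $C \cap H_{-1} \cap \Lambda \neq \varnothing$. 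The reverse implication is obtained verbatim after interchanging the roles of $H_1$ and $H_{-1}$. The main obstacle, and the real content of the argument, is the reduction step together with the derivation that the hypotheses secretly make $H_0$ rational inside $\Span(C)$: this is what lets the purely existential Proposition~\ref{prop.Weights_facet}\eqref{prop.Weights_facet1} apply, with the mild technical point being the convention that a full-dimensional rational cone has rational linear span (including its lineality space).
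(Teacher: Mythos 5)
Your proof is correct and not circular (Proposition~\ref{prop.Weights_facet} is established before, and independently of, this statement), but it takes a genuinely different route from the paper's. Both proofs share the convexity step: the paper also deduces $\inter(C)\cap H_0\neq\varnothing$ from $C\cap H_{\pm1}\neq\varnothing$, by joining points of $\inter(C)\cap H_1$ and $\inter(C)\cap H_{-1}$ (obtained from Lemma~\ref{lem.Intersection_with_affine_hyperplane}) by a segment. After that the two arguments diverge. You reduce to full-dimensional $C$, upgrade the hypotheses to ``$C\cap H_0$ has dimension $\dim H_0$ and $\rank(H_0\cap\Lambda)=\dim H_0$'', and then invoke Proposition~\ref{prop.Weights_facet}\eqref{prop.Weights_facet1} with $\gamma=-\lambda$, which imports the sphere-density argument hidden in that proposition and even yields the stronger conclusion that $\inter(C)\cap H_{-1}$ contains a lattice point. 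The paper instead stays in the possibly degenerate ambient setting and argues constructively: by Lemma~\ref{lem.Weights_lattice}, the lattice points of the rational cone $C\cap H_0$ cannot all lie in the union $B$ of proper faces of $C$ (otherwise $C\cap H_0=(C\cap H_0\cap\Lambda)_{\infty}\subset B_{\infty}=B$, contradicting $\inter(C)\cap H_0\neq\varnothing$), so one may pick $\gamma_0\in(C\cap H_0\cap\Lambda)\setminus B$; writing $C$ as a finite intersection of half-spaces, one checks directly that $m\gamma_0-\gamma_{-1}\in C\cap H_1\cap\Lambda$ for $m\gg0$. Thus the paper's proof avoids both your reduction step and any rank condition on $H_0\cap\Lambda$, at the cost of an explicit construction, whereas your proof is shorter given that Proposition~\ref{prop.Weights_facet} is already available. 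Two small points to patch: dispose first of the trivial case $H_1=H_0$ (your assertion that $c\neq0$ silently excludes it; if $c=0$ then $H_{-1}=H_1$ and there is nothing to prove, and your full-dimensionality argument for $C\cap H_0$ would otherwise fail); and your claims that $\Span(C)$, respectively $H_0=\Span(C\cap H_0)$, are rational subspaces use the convention that a rational cone is generated by lattice points (so, in particular, has rational lineality space) --- you flag this, and it is consistent with the paper, whose proof of Lemma~\ref{lem.Weights_lattice} makes the same implicit assumption.
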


\begin{proof}
	If $H_0 = H_1$, then $H_0 = H_{-1}$ and the statement is trivial. Thus we assume that $H_0 \neq H_1$,
	whence $H_0 \neq H_{-1}$ and $H_{1} \neq H_{-1}$.
		
	Since $C \cap H_{\pm 1} \neq \varnothing$ and since $H_0$, $H_1$ and $H_{-1}$ are pairwise disjoint, 
	there exist $c_{\pm 1} \in \inter(C) \cap H_{\pm 1}$ by Lemma~\ref{lem.Intersection_with_affine_hyperplane}. As $C$ is convex, the line segment in 
	$V$ that connects $c_1$ and $c_{-1}$ lies in $\inter(C)$ and thus 
	$\inter(C) \cap H_0 \neq \varnothing$. Let 
	$B \subset C$ be the union of the proper faces of $C$, i.e. $B$ is the topological
	boundary of $C$ inside the linear span of $C$, see \cite[Sect.~1.2~(7)]{Fu1993Introduction-to-to}.
	If $C \cap H_0 \cap \Lambda \subset B$, then by Lemma~\ref{lem.Weights_lattice} applied to
	the rational convex polyhedral cone 
	$C \cap H_0$ in $V$ we get $C \cap H_0 = (C \cap H_0 \cap \Lambda)_{\infty} \subset B_{\infty} = B$, 
	a contradiction to $\inter(C) \cap H_0 \neq \varnothing$.
	In particular, we can choose 
	\[
		\gamma_0 \in (C \cap H_0 \cap \Lambda) \setminus B \, .
	\]
	
	By exchanging $H_1$ and $H_{-1}$, it is enough to prove
	``$\Rightarrow$" of the statement. 
	For this, let $\gamma_{-1} \in  C \cap H_{-1} \cap \Lambda$. 
	Since $C$ is a convex rational polyhedral cone,
	there  is a finite set $E \subset V \setminus \{0\}$ with
	\[
		C = \bigcap_{u \in E} \set{v \in V}{\sprod{u}{v}\geq 0} \, .
	\]
	Since $\gamma_0 \in C \setminus B$, we get $\sprod{u}{\gamma_0} > 0$ for all $u \in E$. In particular, we can
	choose $m \geq 0$ so big that 
	\[
		\sprod{u}{m \gamma_0 - \gamma_{-1}} = m \sprod{u}{\gamma_0} - \sprod{u}{\gamma_{-1}} \geq 0
	\]
	for all $u \in E$, i.e. $m \gamma_0 - \gamma_{-1} \in C$.
	As $\gamma_0 \in H_0 \cap \Lambda$, we get 
	$m \gamma_0 - \gamma_{-1} \in C \cap H_1 \cap \Lambda$.
\end{proof}

\section{Quasi-affine varieties}

To any variety $X$, we can naturally associate an affine scheme
\[
	X_{\aff} := \Spec \OO(X) \, .
\]
Moreover this scheme comes equipped with the so-called canonical morphism
\[
	\iota \colon X \to X_{\aff}
\]
which is induced by the  natural isomorphism $\OO(X) = \OO(X_{\aff})$.

\begin{remark}
	\label{rem.canonical_morphi_dominant}
	For any variety $X$, the
	canonical morphism $\iota \colon X \to X_{\aff}$ is dominant. Indeed, let
	$X' := \overline{\iota(X)} \subset X_{\aff}$ be the closure of the image of $\iota$
	(endowed with the induced reduced subscheme structure). 
	Since the composition
	\[
		\OO(X) = \OO(X_{\aff}) \to \OO(X') \to \OO(X)
	\]
	is the identity on $\OO(X)$, it follows that the surjection $\OO(X)= \OO(X_{\aff}) \to \OO(X')$
	is injective and thus $X' = X$.
\end{remark}

\begin{lemma}[{\cite[Ch. II, Proposition 5.1.2]{Gr1961Elements-de-geomet-II}}]
	\label{lem:Quasi-affine_irreducible}
	Let $X$ be a variety. Then $X$ is quasi-affine if and only if the canonical morphism
	$\iota \colon X \to X_{\aff}$ is an open immersion.
\end{lemma}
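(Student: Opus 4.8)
The plan is to prove both implications, the reverse one being essentially formal and the forward one carrying all the content. For ``$\Leftarrow$'', if $\iota \colon X \to X_{\aff}$ is an open immersion, then $X$ is isomorphic to an open subscheme of the affine scheme $X_{\aff} = \Spec \OO(X)$, and an open subscheme of an affine scheme is quasi-affine by definition; so there is nothing more to do.

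For ``$\Rightarrow$'', assume $X$ is quasi-affine and fix an open immersion $j \colon X \hookrightarrow \Spec A$ for some ring $A$, identifying $X$ with an open subscheme of $\Spec A$. The restriction homomorphism $A = \OO(\Spec A) \to \OO(X)$ induces a morphism $f \colon X_{\aff} \to \Spec A$ satisfying $f \circ \iota = j$. Since $X$ is open in $\Spec A$, it is covered by basic open subsets $D(g) \subset X$ with $g \in A$, and since a variety is Noetherian, hence quasi-compact, finitely many such $g_1, \dots, g_r$ suffice. For each $g = g_\alpha$ write $h := \bar g \in \OO(X)$ for its image; at a point $x \in X \subset \Spec A$ one has $h(x) = g(x)$, so that $\iota^{-1}(D_{X_{\aff}}(h)) = D(g) \cap X = D(g)$, where $D_{X_{\aff}}(h) \subset X_{\aff}$ denotes the corresponding basic open.

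The heart of the argument is to show that $\iota$ restricts to an isomorphism $D(g) \to D_{X_{\aff}}(h)$ for each $\alpha$. On the level of rings this amounts to the assertion that the natural map $\OO(X)_h \to \OO(D(g)) = A_g$ is an isomorphism. Here I would invoke the key lemma that for a quasi-compact and separated (hence quasi-separated) scheme, and in particular for a variety $X$, localization of global sections is compatible with restriction to the non-vanishing locus, i.e. $\OO(X)_h \to \OO(X_h)$ is an isomorphism, where $X_h = \set{x \in X}{h(x) \neq 0}$. This is proved by writing $\OO(X)$ as the equalizer of $\prod_i A_{f_i} \rightrightarrows \prod_{i,j} \OO(D(f_i) \cap D(f_j))$ for a finite affine open cover $\{D(f_i)\}$ of $X$ and using that localization at $h$, being exact, commutes with this finite limit; quasi-separatedness guarantees that the double intersections are again covered by finitely many affine opens, so the same reasoning applies to them. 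Since $X_h = D(g)$ is affine with $\OO(X_h) = A_g$, the desired ring isomorphism follows, and hence $\iota|_{D(g)}$ is an isomorphism onto $D_{X_{\aff}}(h)$.

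It remains to glue these local isomorphisms. The source members $D(g_\alpha)$ cover $X$, so their images cover $U := \bigcup_\alpha D_{X_{\aff}}(h_\alpha) = \iota(X)$, an open subset of $X_{\aff}$. Moreover $\iota$ is injective: if $\iota(x) = \iota(y)$, choose $\alpha$ with $x \in D(g_\alpha)$; then both $\iota(x), \iota(y) \in D_{X_{\aff}}(h_\alpha)$, so $y \in \iota^{-1}(D_{X_{\aff}}(h_\alpha)) = D(g_\alpha)$, and the injectivity of $\iota|_{D(g_\alpha)}$ forces $x = y$. A morphism that is injective and restricts to isomorphisms $\iota^{-1}(D_{X_{\aff}}(h_\alpha)) \to D_{X_{\aff}}(h_\alpha)$ onto the members of an open cover of its image $U$ is an isomorphism onto $U$, hence an open immersion; this finishes the proof. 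I expect the main obstacle to be the localization lemma $\OO(X)_h \cong \OO(X_h)$: this is exactly the place where the hypothesis that $X$ is a variety (quasi-compact and separated) enters, and without a quasi-separatedness assumption the statement can fail.
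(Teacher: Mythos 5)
Your proof is correct. There is, however, nothing in the paper to compare it against: the paper does not prove this lemma at all, but quotes it from EGA II, Proposition 5.1.2. Your argument is essentially the standard proof of that cited proposition: cover the quasi-affine $X$ by finitely many basic open sets $D(g_\alpha)$ of the ambient affine scheme, reduce everything to the localization lemma $\OO(X)_h \simeq \OO(X_h)$ for quasi-compact, (quasi-)separated schemes --- which you correctly identify as the crux, and whose proof via the sheaf-equalizer sequence and exactness of localization you sketch accurately --- and then glue the local isomorphisms $D(g_\alpha) \simeq D_{X_{\aff}}(h_\alpha)$; for this last step your separate injectivity argument is not even needed, since being an isomorphism is local on the target. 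One small caveat concerns the direction you call formal. Declaring the backward implication true ``by definition'' is accurate for EGA's notion of quasi-affine (a quasi-compact open subscheme of an affine \emph{scheme}; quasi-compactness is automatic here since varieties are Noetherian). But if one defines a quasi-affine variety as an open subvariety of an affine \emph{variety}, i.e.\ of a finite-type one, then $X_{\aff} = \Spec \OO(X)$ need not itself qualify, because $\OO(X)$ can fail to be a finitely generated $\kk$-algebra (Rees--Nagata type examples). In that case one needs a routine extra step: replace $\OO(X)$ by a finitely generated subalgebra $B$ containing the $h_\alpha$ together with numerators of generators of each $\OO(X)_{h_\alpha}$; then $B_{h_\alpha} \simeq \OO(X)_{h_\alpha}$, and $X$ embeds as an open subvariety of the affine variety $\Spec B$. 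This is a definitional wrinkle rather than a gap in your main argument, which stands.
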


%\begin{proof}
%	By \cite[Ch. II, Proposition 5.1.2]{Gr1961Elements-de-geomet-II}, 
%	the canonical morphism $\iota \colon X \to X_{\aff}$
%	is an open immersion of schemes. Let 
%	$X' := \overline{\iota(X)} \subset X_{\aff}$ be the closure of the image of $\iota$ (endowed with
%	the reduced scheme structure). Since the composition
%	\[
%		\OO(X) = \OO(X_{\aff}) \to \OO(X') \to \OO(X)
%	\]
%	is the identity on $\OO(X)$, it follows that the surjection $\OO(X)= \OO(X_{\aff}) \to \OO(X')$
%	is injective and thus $X' = X$.
%\end{proof}

If $X$ is endowed with an algebraic group action, then this
action uniquely extends to an algebraic group action on $X_{\aff}$:

\begin{lemma}
	\label{lem:Quasi-affine_group_action}
	Let $X$ be a quasi-affine $H$-variety for some algebraic group $H$.
	Then $X_{\aff}$ is an affine scheme that has a unique
	$H$-action that extends the $H$-action on $X$ via the canonical open immersion
	$X \hookrightarrow X_{\aff}$.
\end{lemma}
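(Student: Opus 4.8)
The plan is to construct the $H$-action on $X_{\aff} = \Spec \OO(X)$ by dualizing the coaction on functions, and then verify that it extends the given action and is unique. First I would recall that an algebraic $H$-action on a quasi-affine variety $X$ is equivalent to a comodule structure $\mu^* \colon \OO(X) \to \OO(H) \otimes \OO(X)$, where $\mu \colon H \times X \to X$ is the action morphism. The key point is that this comodule structure is an intrinsic piece of data on the ring $\OO(X)$ together with the Hopf-algebra structure on $\OO(H)$, so it does not ``see'' that $X$ is only quasi-affine. Since $\OO(X_{\aff}) = \OO(X)$ by definition of $X_{\aff}$, the same comodule map furnishes $\OO(X_{\aff})$ with an $H$-comodule structure, which is exactly the algebraic datum of an $H$-action on the affine scheme $X_{\aff}$. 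I would spell this out by dualizing: the comultiplication gives $\OO(X_{\aff}) \to \OO(H) \otimes \OO(X_{\aff})$ satisfying the counit and coassociativity axioms, and these translate directly into the identity and associativity axioms for an action morphism $H \times X_{\aff} \to X_{\aff}$.

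Next I would check compatibility with the canonical open immersion $\iota \colon X \into X_{\aff}$ of Lemma~\ref{lem:Quasi-affine_irreducible}. Because $\iota$ is induced by the identity $\OO(X) = \OO(X_{\aff})$, the two action morphisms $H \times X \to X$ and $H \times X_{\aff} \to X_{\aff}$ fit into a commuting square with $\iota$ on the vertical arrows: concretely, the restriction of the coaction on $\OO(X_{\aff})$ along $\OO(X_{\aff}) = \OO(X)$ recovers the original coaction, so the $H$-action on $X_{\aff}$ restricts to the given one on the open subscheme $\iota(X)$. This is where I would want to be slightly careful: one must confirm that $\iota(X)$ is $H$-stable under the new action, or rather that the square commutes on the level of schemes and not merely on global functions. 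The honest check is that the composite $H \times X \to H \times X_{\aff} \to X_{\aff}$ agrees with $\iota \circ \mu$, which follows by comparing the induced maps on coordinate rings since $\iota$ is dominant (Remark~\ref{rem.canonical_morphi_dominant}) and all schemes involved are reduced.

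For uniqueness, I would use that $\iota$ is dominant, so a morphism out of $X_{\aff}$ is determined by its values on the dense image $\iota(X)$. Two $H$-actions on $X_{\aff}$ that both extend the action on $X$ agree on the dense open $\iota(X)$, hence agree on the product $H \times \iota(X)$, which is dense in $H \times X_{\aff}$ (as $H \times -$ preserves density over a field); since $X_{\aff}$ is separated, two morphisms $H \times X_{\aff} \to X_{\aff}$ that coincide on a dense subset coincide everywhere. Equivalently, at the ring level any extending coaction must restrict to the given one on $\OO(X) = \OO(X_{\aff})$, and there is nothing left to choose, giving uniqueness immediately.

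The main obstacle is the passage from the algebraically clean comodule statement to the geometric statement that $\iota$ is genuinely $H$-equivariant for the constructed action, i.e. that the action on $X_{\aff}$ really does preserve the open subscheme $\iota(X)$ and restricts to the original action there. The cleanest way around this is to phrase everything in terms of comodule algebras from the start, so that ``the $H$-action on $X_{\aff}$ extends the one on $X$'' becomes the tautological statement that the two coactions on the identified rings $\OO(X) = \OO(X_{\aff})$ coincide; the equivariance of $\iota$ then follows formally from functoriality of $\Spec$ applied to this ring identity, and one only needs Remark~\ref{rem.canonical_morphi_dominant} and separatedness to upgrade the ring-level statement to uniqueness of the geometric action.
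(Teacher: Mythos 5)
Your argument is correct. Note, though, that the paper does not actually prove this lemma itself: its ``proof'' consists of invoking Lemma~\ref{lem:Quasi-affine_irreducible} for the open immersion $X \hookrightarrow X_{\aff}$ and then citing \cite[Lemma~5]{KrReSa2018Is-the-affine-spac} for existence and uniqueness of the extension. Your comodule argument supplies precisely the content that the paper outsources: the action morphism induces a coaction $\mu^* \colon \OO(X) \to \OO(H) \otimes \OO(X)$ (using $\OO(H\times X) = \OO(H)\otimes\OO(X)$, which the paper itself quotes from Demazure--Gabriel in the proof of Proposition~\ref{prop.locally_finite_and_rational}), and since $\OO(X_{\aff}) = \OO(X)$ this same map is a coaction for $X_{\aff}$. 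One remark that streamlines your write-up: because $X_{\aff}$ is affine, morphisms $S \to X_{\aff}$ from an arbitrary scheme $S$ correspond bijectively to $\kk$-algebra homomorphisms $\OO(X_{\aff}) \to \OO(S)$. This single fact yields at once (i) the action morphism $H \times X_{\aff} \to X_{\aff}$ from the coaction, (ii) the group-action axioms, since both sides of each axiom are morphisms into the affine scheme $X_{\aff}$ inducing the same ring map, (iii) the equivariance of $\iota$, since both composites $H \times X \to X_{\aff}$ induce $\mu^*$ on global sections, and (iv) uniqueness, since any extending action is forced to induce $\mu^*$ and is therefore determined. In particular, your appeals to dominance of $\iota$, reducedness, density of $H \times \iota(X)$, and separatedness of $X_{\aff}$ are all avoidable; the density route does work, but it silently requires $H \times X_{\aff}$ to be reduced (true here because tensor products of reduced algebras over an algebraically closed field of characteristic zero are reduced) and that the flat, finitely presented projection $H \times X_{\aff} \to X_{\aff}$ pulls dense subsets back to dense subsets, so the ring-level uniqueness argument you mention at the end is the one to keep.
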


\begin{proof}
	By Lemma~\ref{lem:Quasi-affine_irreducible}, 
	the canonical morphism $X \to X_{\aff}$
	is an open immersion of schemes 
	and there is a unique action of $H$ on $X_{\aff}$ that extends the $H$-action on $X$, see e.g. \cite[Lemma~5]{KrReSa2018Is-the-affine-spac}.
\end{proof}

Now, we compare $G$-sphericity of $X$ and $X_{\aff}$.

\begin{lemma}
	\label{lem:Spherical_quasi-affine}
	Let $G$ be a connected reductive algebraic group and
	let $X$ is a quasi-affine $G$-variety. Then 
	\[
		\textrm{$X$ is $G$-spherical} \quad \iff \quad 
		\textrm{$X_{\aff}$ is an affine $G$-spherical variety} \, .
	\]
\end{lemma}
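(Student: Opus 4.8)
The plan is to exploit that, by Remark~\ref{rem.canonical_morphi_dominant} together with Lemmas~\ref{lem:Quasi-affine_irreducible} and~\ref{lem:Quasi-affine_group_action}, the canonical morphism $\iota \colon X \to X_{\aff}$ is a $G$-equivariant open immersion with dense image, and that the canonical isomorphism $\OO(X) = \OO(X_{\aff})$ is an isomorphism of rational $G$-modules. Since $X$ is irreducible, $\OO(X)$ is a domain, so $X_{\aff}$ is irreducible and reduced. Moreover, if $g \in G$ acts trivially on $\iota(X)$, then it acts trivially on $X_{\aff}$ by density and separatedness; hence the $G$-action on $X$ is faithful if and only if the $G$-action on $X_{\aff}$ is faithful. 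Thus faithfulness is never an issue, and both sphericity conditions reduce to the existence of a dense $B$-orbit.

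For the implication ``$X_{\aff}$ spherical $\Rightarrow$ $X$ spherical'' I would argue purely geometrically. Let $\OO \subset X_{\aff}$ be the open dense $B$-orbit. As $\iota(X)$ is $G$-stable (hence $B$-stable), open and dense in $X_{\aff}$, the intersection $\OO \cap \iota(X)$ is a non-empty $B$-stable subset of the single orbit $\OO$, and is therefore equal to $\OO$. Consequently $\OO \subset \iota(X)$, so $\iota^{-1}(\OO)$ is an open dense $B$-orbit in $X$, and $X$ is $G$-spherical. This direction uses only that $X_{\aff}$ is a variety, which is part of the hypothesis.

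For the converse ``$X$ spherical $\Rightarrow$ $X_{\aff}$ spherical'', the existence of the dense $B$-orbit transfers immediately: the image under $\iota$ of the open dense $B$-orbit of $X$ is a $B$-orbit that is open in $\iota(X)$ --- hence open in $X_{\aff}$ --- and dense in $X_{\aff}$. The remaining, and genuinely delicate, point is to show that the affine scheme $X_{\aff}$ is of finite type over $\kk$, i.e. that $\OO(X)$ is a finitely generated $\kk$-algebra; this is the main obstacle. Here I would invoke the characterization of sphericity by multiplicity-freeness (\name{Vinberg}--\name{Kimelfeld}): an irreducible quasi-affine $G$-variety $W$ carries a dense $B$-orbit if and only if each weight space $\OO(W)^{(B)}_{\lambda}$ is at most one-dimensional, i.e. $\OO(W)$ is a multiplicity-free $G$-module. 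Applied to $X$, this makes $\OO(X)^U$ (with $U$ the unipotent radical of $B$) equal to the semigroup algebra $\kk[\Lambda^+(X)]$ of the weight monoid; a standard argument --- a $G$-stable subalgebra containing $\OO(X)^U$ already equals $\OO(X)$ in the multiplicity-free situation --- then shows that $\OO(X)$ is finitely generated as soon as $\OO(X)^U$ is, which in turn holds once $\Lambda^+(X)$ is a finitely generated monoid.

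I therefore expect the crux to be the finite generation of the weight monoid $\Lambda^+(X)$ of the quasi-affine spherical variety $X$. I would establish this from the general theory of spherical varieties --- for instance by passing to the affine $G$-stable closure $\overline{\iota(X)}$ (and, if needed, its normalization), where finite generation is classical, and transferring finiteness back to $\Lambda^+(X)$ via Gordan's lemma --- or by citing the known finiteness result directly. Once $X_{\aff}$ is known to be a variety, the equivalence is in fact transparent: since $\OO(X) = \OO(X_{\aff})$ as $G$-modules, multiplicity-freeness --- and hence sphericity --- holds for $X$ if and only if it holds for $X_{\aff}$.
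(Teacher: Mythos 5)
Your setup and the direction ``$X_{\aff}$ spherical $\Rightarrow$ $X$ spherical'' are correct and agree with the paper, which compresses this direction into a citation of Lemma~\ref{lem:Quasi-affine_irreducible}; your orbit argument is exactly the missing detail. You also correctly isolate the crux of the converse, namely that $\OO(X)$ is a finitely generated $\kk$-algebra, and your reduction of this to finite generation of the monoid $\Lambda^+(X)$ (Vinberg--Kimelfeld multiplicity-freeness, the identification $\OO(X)^U \simeq \kk[\Lambda^+(X)]$, and the highest-weight argument that a $G$-stable subalgebra containing $\OO(X)^U$ equals $\OO(X)$) is sound in characteristic zero.

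The genuine gap is in your treatment of the finite generation of $\Lambda^+(X)$ itself. First, ``the affine $G$-stable closure $\overline{\iota(X)}$'' is circular: $\iota(X)$ is dense in $X_{\aff}$ by Remark~\ref{rem.canonical_morphi_dominant}, so this closure is $X_{\aff}$ itself, whose finite-typeness is exactly what is to be proved. What you would need is the (true, but unproved in your sketch) fact that a quasi-affine $G$-variety admits a $G$-equivariant open immersion into an affine $G$-variety $Z$ of finite type, obtained from a suitable finitely generated $G$-stable subalgebra of $\OO(X)$. Second, and more seriously, the ``transfer via Gordan's lemma'' from such a $Z$ back to $X$ does not work: passing to the open $G$-stable subset $X \subset Z$ genuinely enlarges the weight monoid and even its cone. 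For instance, for $T = \GG_m^2$ and $X = \AA^1 \times (\AA^1 \setminus \{0\}) \subset Z = \AA^2$ one has $\Conv(\Lambda^+(Z))$ a quadrant while $\Conv(\Lambda^+(X))$ is a half-plane. Hence polyhedrality of $\Conv(\Lambda^+(X))$ --- without which Gordan's lemma says nothing --- cannot be imported from $Z$; establishing it is essentially the content of the statement being proved (in the toric case it is Lemma~\ref{lem.toric_qusi-affine}, in general it is Knop's theorem). Gordan's lemma would moreover only produce monoids of the form $C \cap \Lambda$, i.e.\ saturated ones, and saturation of $\Lambda^+(X)$ requires normality of $X$, which the lemma does not assume: the curve $\Spec \kk[t^2,t^3]$ with its $\GG_m$-action is spherical in the paper's sense with non-saturated weight monoid $\{0,2,3,4,\dots\}$. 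Your fallback --- ``citing the known finiteness result directly'' --- is of course legitimate, but it is precisely the paper's proof (the citation of \cite{Kn1993Uber-Hilberts-vier}), and with that citation in hand the multiplicity-freeness detour becomes unnecessary, since Knop's result yields finite generation of $\OO(X)$ itself.
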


\begin{proof}	
	%	By \cite[Ch. II, Proposition 5.1.2]{Gr1961Elements-de-geomet-II}, 
	%	the canonical morphism $X \to X_{\aff}$
	%	is an open immersion and there is a unique action of $G$ on $X_{\aff}$ that extends the $G$-action on $X$, see e.g. %	\cite[Lemma~5]{KrReSa2018Is-the-affine-spac}.
	%
	If $X_{\aff}$ is an affine $G$-spherical variety, then $X$ is $G$-spherical by Lemma~\ref{lem:Quasi-affine_irreducible}.
	
	For the other implication,
	assume that $X$ is $G$-spherical. It follows that $\OO(X)$ is a 
	finitely generated algebra over the ground field
	by \cite{Kn1993Uber-Hilberts-vier} and thus $X_{\aff} = \Spec \OO(X)$ is an affine variety.
	Since $X$ is irreducible, $X_{\aff}$ is irreducible by Remark~\ref{rem.canonical_morphi_dominant}.
	Moreover, for each $x \in X$, the local ring $\OO_{X, x}$ is integrally
	closed and thus $\OO(X) = \bigcap_{x \in X} \OO_{X, x}$ is integrally closed, i.e.
	$X_{\aff}$ is normal. Since $X$ is an open subset of $X_{\aff}$,
	and since a Borel subgroup of $G$ acts with an open orbit on $X$, the same is true
	for $X_{\aff}$. 
\end{proof}

%Moreover we will need the following statement which is well-known for 
%affine varieties and generalizes mutatis mutandis to all varieties.

\begin{proposition}
	\label{prop.locally_finite_and_rational}
	Let $X$ be any variety endowed with an $H$-action for some
	algebraic group $H$. The natural action of $H$ on $\OO(X)$ satisfies the following:
	If $f \in \OO(X)$, then $\Span_{\kk}(Hf)$ is a finite dimensional $H$-invariant subspace 
	of $\OO(X)$ and $H$ acts regularly on it.
\end{proposition}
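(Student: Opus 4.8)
The plan is to exploit the action morphism $a \colon H \times X \to X$, $(h,x) \mapsto h\cdot x$, together with its comorphism on global functions. The technical heart is the identification $\OO(H \times X) = \OO(H) \otimes_{\kk} \OO(X)$. For affine $X$ this is immediate, and it is essentially the only place where allowing $X$ to be an arbitrary variety costs anything. I would establish it as follows: since an algebraic group $H$ is affine, the projection $\pr_X \colon H \times X \to X$ is an affine morphism, so $\Gamma(H \times X, \OO) = \Gamma(X, (\pr_X)_* \OO_{H\times X})$, and on each affine open $\Spec A \subseteq X$ one has $(\pr_X)_* \OO = \OO(H) \otimes_\kk A$; hence $(\pr_X)_* \OO_{H\times X} = \OO(H) \otimes_\kk \OO_X$. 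Writing $\OO(H) \otimes_\kk \OO_X$ as the filtered colimit of its finite-rank free $\OO_X$-subsheaves and using that $X$, being a variety, is Noetherian (so that $\Gamma$ commutes with such colimits of quasi-coherent sheaves), one recovers $\Gamma(X, \OO(H) \otimes_\kk \OO_X) = \OO(H) \otimes_\kk \OO(X)$. I expect this sheaf-theoretic identity to be the only genuine obstacle; everything after it is the classical argument for linear algebraic groups.

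Granting the identity, I would write $a^* f = \sum_{i=1}^n g_i \otimes f_i$ with $g_1,\dots,g_n \in \OO(H)$ linearly independent over $\kk$ and $f_1,\dots,f_n \in \OO(X)$. With the convention $(h\cdot f)(x) = f(h^{-1}x)$ one has $a^* f(h',x) = f(h'x) = (h'^{-1}\cdot f)(x)$, so that $h \cdot f = \sum_i g_i(h^{-1}) f_i$. In particular $Hf \subseteq \Span_\kk(f_1,\dots,f_n)$, and therefore $V := \Span_\kk(Hf)$ is finite-dimensional. That $V$ is $H$-invariant is automatic, since $h' \cdot (h\cdot f) = (h'h)\cdot f \in Hf$.

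To upgrade $V$ to a \emph{regular} $H$-module, I would first show that each $f_i$ already lies in $V$. Because the $g_i$ are $\kk$-linearly independent, there exist points $p_1,\dots,p_n \in H$ for which the matrix $\big(g_i(p_j^{-1})\big)_{i,j}$ is invertible (linearly independent regular functions are separated by finitely many points). Inverting this matrix expresses each $f_i$ as a $\kk$-linear combination of the elements $p_j \cdot f \in V$, whence $f_i \in V$ and thus $a^* f \in \OO(H) \otimes_\kk V$. Combined with linearity this promotes to a coaction on all of $V$: for a spanning element $h\cdot f$ one computes $a^*(h\cdot f) = \sum_i (g_i \circ \lambda_{h^{-1}}) \otimes f_i$, where $\lambda_{h^{-1}} \colon H \to H$ is left translation by $h^{-1}$, so the left tensor factors remain regular on $H$ while the right factors $f_i$ stay in $V$. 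Hence $a^*(V) \subseteq \OO(H) \otimes_\kk V$.

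Finally I would read off regularity from this coaction. Choosing a basis $v_1,\dots,v_m$ of $V$ and writing $a^* v_j = \sum_k m_{kj} \otimes v_k$ with $m_{kj} \in \OO(H)$ gives $h^{-1}\cdot v_j = \sum_k m_{kj}(h)\, v_k$, so the matrix coefficients of the $H$-action on $V$ are regular functions on $H$; equivalently the homomorphism $H \to \GL(V)$ is a morphism of varieties, i.e. $H$ acts regularly on $V$. For the quasi-affine varieties of primary interest one could alternatively bypass the colimit argument, extending the action to $\Spec \OO(X)$ via Lemma~\ref{lem:Quasi-affine_group_action} and invoking the affine case directly; but the argument above requires no such hypothesis on $X$.
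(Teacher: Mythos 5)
Your proof is correct, and its skeleton is the paper's: pull $f$ back along the action comorphism, use the identification $\OO(H \times X) = \OO(H) \otimes_{\kk} \OO(X)$ to write $a^{*}f = \sum_i g_i \otimes f_i$, deduce finite-dimensionality of $\Span_{\kk}(Hf)$ from $h \cdot f = \sum_i g_i(h^{-1}) f_i$, and then read regularity off the matrix coefficients. The differences lie in two places. First, the paper simply cites Demazure--Gabriel for the key identity $\OO(H \times X) = \OO(H) \otimes_{\kk} \OO(X)$, whereas you prove it (affineness of $\pr_X$, computation of the pushforward on affine opens, and commutation of global sections with filtered colimits of quasi-coherent sheaves on the Noetherian space $X$); this makes your argument self-contained, and the sheaf-theoretic reasoning is sound. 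Second, the linear algebra that upgrades finite-dimensionality to regularity is dual to the paper's: the paper normalizes the $f_i$ to be linearly independent, changes basis so that $f_1, \dots, f_m$ is a basis of $W = \Span_{\kk}(Hf)$, observes that the superfluous $\OO(H)$-factors vanish, and then expresses each basis vector as a combination of translates $h_r f$ to exhibit regular matrix coefficients; you instead normalize the $g_i$ to be linearly independent, invoke the standard fact that linearly independent regular functions can be separated by finitely many points to conclude $f_i \in V$, and from there obtain the comodule property $a^{*}(V) \subseteq \OO(H) \otimes_{\kk} V$, from which regularity is immediate. Both routes are classical; yours needs the extra point-separation lemma but yields the cleaner coaction formulation, while the paper's basis-change trick avoids that lemma entirely.
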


For the sake of completeness,
we insert a proof here which follows closely \cite[\S2.4 Lemma]{Kr1984Geometrische-Metho}:

\begin{proof}
	The action morphism $\rho \colon H \times X \to X$ induces a $\kk$-algebra homomorphism
	\[
		\rho^{\ast} \colon \OO(X) \to \OO(H \times X) = \OO(H) \otimes \OO(X)
	\]
	(the equality 
	$\OO(H \times X) = \OO(H) \otimes \OO(X)$ follows from \cite[I,\S2,2.6 Proposition]{DeGa1970Groupes-algebrique}).
	Let $f \in \OO(X)$. There exist finitely many linearly independent $f_1, \ldots, f_n \in \OO(X)$
	such that 
	\[
			\rho^{\ast}(f) = \sum_{i=1}^n p_i \otimes f_i
	\]
	for some $p_1, \ldots, p_n \in \OO(H)$. Let $W := \Span_{\kk}(Hf) \subset \OO(X)$.
	Thus $W$ is $H$-invariant.
	We claim that $W$
	is contained in $\bigoplus_{i=1}^n \kk f_i$ (and thus is finite dimensional) and that
	$H$ acts regularly on $W$. 
	Indeed, for all $h \in H$ and $x \in X$ we get
	\[
			\label{eq.locally_finite_and_rational}
			\tag{$\ast$}
			(hf)(x) = \rho^\ast(f)(h^{-1}, x) = \sum_{i=1}^n p_i(h^{-1}) f_i(x) 
	\]
	and thus $hf \in \bigoplus_{i=1}^n \kk f_i$, which proves $W \subset \bigoplus_{i=1}^n \kk f_i$. 
	After changing the basis $f_1, \ldots, f_n$, we may have assumed from the beginning (and we will 
	assume this now) that $W = \bigoplus_{i=1}^m \kk f_i$
	for some $m \leq n$. Using that~\eqref{eq.locally_finite_and_rational} holds for all $x \in X$, 
	we get $p_i(h^{-1}) = 0$
	for each $i > m$ and for each $h \in H$, i.e. $p_i = 0$ for each $i > m$. 
	Thus we may assume that $m = n$
	and $f_1, \ldots, f_n$ is a basis of $W$. For $j \in \{1, \ldots, n\}$, writing 
	$f_j = \sum_r \lambda_{jr} (h_r f)$ for some  $h_r \in H$ and $\lambda_{jr} \in \kk$ yields
	\[
		h f_j = \sum_r \lambda_{jr} (h h_r f) = 
		\sum_i \left(\sum_r \lambda_{jr} p_i(h_r^{-1} h^{-1}) \right) f_i
	\]
	by using~\eqref{eq.locally_finite_and_rational}. Thus $H$ acts regularly on $W$.
\end{proof}

\begin{proposition}
	\label{prop:H-semi-invariants}
	Let $H$ be a connected solvable algebraic group and let $X$ be an irreducible
	quasi-affine $H$-variety.
	Then for every $H$-semi-invariant rational map $f \colon X \dasharrow \kk$
	there exist $H$-semi invariants $f_1, f_2 \in \OO(X)$ such
	that $f = f_1 / f_2$.
\end{proposition}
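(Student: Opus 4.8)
The plan is to produce the representation $f = f_1/f_2$ by finding a semi-invariant element in the ideal of denominators of $f$. First I would dispose of the trivial case $f=0$ (write $0 = 0/1$), so assume $f \neq 0$ and let $\chi$ be the character of $H$ with $t\cdot f = \chi(t)\,f$ for all $t\in H$. Since $X$ is irreducible and quasi-affine, the canonical open immersion $X \hookrightarrow X_{\aff} = \Spec\OO(X)$ of Lemma~\ref{lem:Quasi-affine_irreducible} is dense, so the function field $\kk(X)$ is the fraction field of $\OO(X)$; in particular $f = p/q$ for some $p,q \in \OO(X)$ with $q \neq 0$.

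The central object is the ideal of denominators
\[
	I := \set{a \in \OO(X)}{a f \in \OO(X)} \subset \OO(X) \, .
\]
It is a nonzero ideal of $\OO(X)$: it is closed under addition and under multiplication by $\OO(X)$, and $q \in I$ because $q f = p \in \OO(X)$. Moreover $I$ is $H$-stable. Indeed, for $a \in I$ and $t \in H$ the element $t\cdot(a f)$ lies in $\OO(X)$ (as $\OO(X)$ is $H$-stable), and semi-invariance of $f$ gives
\[
	(t\cdot a)\,f = \chi(t)^{-1}\, t\cdot(a f) \in \OO(X) \, ,
\]
so $t\cdot a \in I$.

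Next I would extract a semi-invariant element of $I$. Choosing any nonzero $a_0 \in I$, Proposition~\ref{prop.locally_finite_and_rational} shows that $W := \Span_{\kk}(H a_0)$ is a finite-dimensional $H$-invariant subspace on which $H$ acts regularly; since $I$ is $H$-stable we have $H a_0 \subset I$, hence $W \subset I$. As $H$ is connected and solvable and $\kk$ is algebraically closed, the Lie--Kolchin theorem applied to the regular action of $H$ on $W$ yields a common eigenvector $f_2 \in W \setminus \{0\}$, that is, an $H$-semi-invariant of some weight $\mu$ lying in $I$.

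Finally, set $f_1 := f_2\, f$. By the definition of $I$ we have $f_1 \in \OO(X)$, and $f_1$ is semi-invariant of weight $\mu\chi$ since $t\cdot f_1 = (t\cdot f_2)(t\cdot f) = \mu(t)\chi(t)\, f_1$; thus $f = f_1/f_2$ is the desired expression. The main obstacle is precisely the production of a semi-invariant denominator: the interplay of local finiteness (Proposition~\ref{prop.locally_finite_and_rational}), which replaces the infinite-dimensional ideal $I$ by a finite-dimensional invariant subspace, with the Lie--Kolchin theorem, which supplies a common eigenvector there, is exactly where the hypothesis that $H$ be connected and solvable is used.
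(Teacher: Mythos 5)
Your proof is correct and is essentially identical to the paper's: the paper also forms the ideal of denominators $\set{g \in \OO(X)}{gf \in \OO(X)}$, notes it is nonzero because $X$ quasi-affine makes $K(X)$ the fraction field of $\OO(X)$, passes to a finite-dimensional $H$-invariant subspace with regular action via Proposition~\ref{prop.locally_finite_and_rational}, and extracts a semi-invariant denominator by Lie--Kolchin. The only differences are cosmetic (you additionally spell out the $f=0$ case, the $H$-stability computation, and the weight of $f_1$).
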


\begin{proof}
	The strategy of the proof is the same as in \cite[Proposition 2.8]{Br2010Introduction-to-ac}.
	Let $f \colon X \dasharrow \kk$ be a $H$-semi-invariant rational map. The subspace
	\[
			V = \set{g \in \OO(X)}{g f \in \OO(X)}	\subset \OO(X)	
	\]
	is non-zero, since the quotient field of $\OO(X)$ is the function field $K(X)$
	(note that $X$ is quasi-affine). Moreover, $V$ is $H$-invariant, since $f$
	is a $H$-semi-invariant. By Proposition~\ref{prop.locally_finite_and_rational} there
	is a finite-dimensional $H$-invariant non-zero subspace $W \subset V$ such
	that the $H$-action on $W$ is regular. Then by the \name{Lie-Kolchin} 
	Theorem~\cite[Theorem~17.6]{Hu1975Linear-algebraic-g} 
	there is a non-zero $H$-semi-invariant $f_2 \in W$.
	Thus $f_1 = f_2 \cdot f \in \OO(X)$ is $H$-semi-invariant which concludes the proof.
\end{proof}

\begin{corollary}
	Assume that $H$ is a unipotent algebraic group and $X$ is an irreducible 
	quasi-affine $H$-variety.
	Then the field $K(X)^H$ of $H$-invariant rational maps $X \dasharrow \kk$
	is the same as the quotient field of $\OO(X)^H$.
\end{corollary}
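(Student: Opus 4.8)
The plan is to deduce the corollary directly from Proposition~\ref{prop:H-semi-invariants}, exploiting two elementary features of a unipotent group $H$ over an algebraically closed field of characteristic zero: first, $H$ is connected and solvable, so that the hypotheses of Proposition~\ref{prop:H-semi-invariants} are met; second, $H$ has no non-trivial characters, so that the notions of $H$-semi-invariant and $H$-invariant coincide. Concretely, any homomorphism $H \to \GG_m$ has unipotent image, and the only unipotent element of $\GG_m$ is the identity, so every character of $H$ is trivial.

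The inclusion ``$\supseteq$'' is immediate and needs neither sphericity nor quasi-affineness: the ring $\OO(X)^H$ is a nonzero integral domain (it contains the constants and sits inside the domain $\OO(X)$), and for $f_1, f_2 \in \OO(X)^H$ with $f_2 \neq 0$ the rational map $f_1/f_2$ is plainly $H$-invariant, hence lies in $K(X)^H$. Thus the quotient field of $\OO(X)^H$ embeds into $K(X)^H$.

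For the reverse inclusion I would take an arbitrary $f \in K(X)^H$. Since $H$ fixes $f$, it is in particular an $H$-semi-invariant rational map $X \dasharrow \kk$ (of trivial weight). Applying Proposition~\ref{prop:H-semi-invariants}---legitimate because $H$ is connected solvable and $X$ is irreducible and quasi-affine---yields $H$-semi-invariants $f_1, f_2 \in \OO(X)$ with $f = f_1/f_2$. The only substantive point, and really the whole content of the argument, is the observation recorded above: because $H$ is unipotent the weights of $f_1$ and $f_2$ are trivial, so $f_1, f_2 \in \OO(X)^H$ and therefore $f$ lies in the quotient field of $\OO(X)^H$. I do not expect any genuine obstacle here; the argument is essentially a one-line specialization of Proposition~\ref{prop:H-semi-invariants}, the main thing to get right being the (standard) vanishing of characters of unipotent groups.
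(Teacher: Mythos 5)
Your proof is correct and follows essentially the same route as the paper: the easy inclusion of the quotient field of $\OO(X)^H$ into $K(X)^H$, followed by an application of Proposition~\ref{prop:H-semi-invariants} together with the triviality of $\frak{X}(H)$ for unipotent $H$ to get the reverse inclusion. Your explicit verification that a unipotent group in characteristic zero is connected and solvable, and that its characters vanish, simply spells out what the paper leaves implicit.
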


\begin{proof}
	There is an inclusion of the quotient field $Q(\OO(X)^H)$ into $K(X)^H$.
	As the character group of $H$ is trivial, Proposition~\ref{prop:H-semi-invariants} implies $Q(\OO(X)^H) = K(X)^H$.
\end{proof}

%With the notation from Sect.~\ref{sec.Setup_Notation} we get now from the corollary above:
%
%\begin{corollary}
%	\label{cor.Character_lattice}
%	Let $X$ be an irreducible quasi-affine
%	$B$-variety. Then the character lattice $\Lambda(X)$
%	is subgroup inside $\frak{X}(B)$ generated by the weight monoid $\Lambda^+(X)$.
%\end{corollary}

\section{Vector fields}

\subsection{Generalities on vector fields}
\label{sec.generalities_on_vectorfields}
Let $X$ be any variety. We denote by
$\Ve(X)$ the vector space of all 
algebraic vector fields on $X$, i.e. all algebraic sections of the tangent bundle
$T X \to X$. Note that $\Ve(X)$ is in a natural way an $\OO(X)$-module.

Now, assume $X$ is endowed with a regular action of an algebraic
group $H$. Then, $\Ve(X)$ is an $H$-module, via the following action: Let $h \in H$ and
$\xi \in \Ve(X)$, then $h \cdot \xi$ is defined via
\[
	(h \cdot \xi) (x) = \mathrm{d} \varphi_h (\xi(\varphi_{h^{-1}}(x))) \quad \textrm{for each $x \in X$}
\]
where $\varphi_h$ denotes the automorphism of $X$ given by
multiplication with $h$ and $d\varphi_h$ denotes the differential of $\varphi_h$.
For a fixed character $\lambda \colon H \to \GG_m$ we say that a
vector field $\xi \in \Ve(X)$ is
\emph{normalized by $H$ with weight $\lambda$} if
$\xi$ is a $H$-semi-invariant of weight $\lambda$, i.e.
for all $h \in H$ the following diagram commutes
\[
\xymatrix{
	TX  \ar[rr]^{ \textrm{d} \varphi_h} && TX \\
	X \ar[u]^{\xi} \ar[rr]^{\varphi_h} && X \ar[u]_{\lambda(h) \xi} 
}
\]
We denote by $\Ve(X)_{\lambda, H}$
the subspace in $\Ve(X)$ of all vector fields which are normalized by $H$
with weight $\lambda$. If it is clear which action on $X$ is meant, we drop
the index $H$ and simply write $\Ve(X)_{\lambda}$. Note that $\Ve(X)_{\lambda}$
is in a natural way an $\OO(X)^H$-module where $\OO(X)^H$ 
denotes the $H$-invariant regular functions on $X$. 
We denote by $\Ve^H(X)$ the subspace of all 
$H$-invariant vector fields in $\Ve(X)$, i.e. $\Ve^H(X) = \Ve(X)_{0}$ where $0$ 
denotes the trivial character of $H$.

Now, assume that $X$ is affine. There is a $\kk$-linear map
\[
	\Ve(X) \to \Der_{\kk}(\OO(X)) \, , \quad \xi \mapsto D_{\xi}
\] 
where $D_{\xi} \colon \OO(X) \to \OO(X)$ is given by $D_{\xi}(f)(x) := \xi(x)(f)$
(here we identify the tangent space of $X$ at $x$ with the 
$\kk$-derivations $\OO_{X, x}\to \kk$ in $x$). In fact, $\Ve(X) \to \Der_{\kk}(\OO(X))$ is an 
isomorphism: Indeed, as $X$ is affine, we have
\[
	\Ve(X) = \Bigset{ \eta \colon X \to TX }{
		\begin{array}{l}
		\textrm{$\eta$ is a set-theoretical section and for all $f \in \OO(X)$} \\
		\textrm{the map $x \mapsto \eta(x)(f)$ is a regular function on $X$}
		\end{array}
	} \, ,
\]
see \cite[\S3.2]{FuKr2018On-the-geometry-of}.

\subsection{Homogeneous $\GG_a$-actions and vector fields}
\label{sec.GroupActionsAndVectorfields}

The material of this small subsection is contained in \cite[\S6.5]{FuKr2018On-the-geometry-of},
however formulated for all varieties.
Throughout this subsection, $H$ denotes an algebraic group and $X$ a $H$-variety. 

Let $D$ be an algebraic group that acts regularly on a variety $X$.
Then we get a $\kk$-linear map $\Lie D \to \Ve(X)$, 
$A \mapsto \xi_A$, where the vector field $\xi_A$ is given by
\[
	\label{Eq:vectorfield_to_H-action}
	\tag{$\varominus$}
	\xi_A \colon X \to TX \, , \quad x \mapsto (\textrm{d}_e \mu_x) A
\]
and $\mu_x \colon D \to X$, $d \mapsto d x$ denotes the orbit map in $x$: Indeed~\eqref{Eq:vectorfield_to_H-action} is a morphism as it is the composition of the morphisms
\[
	X \to T_e D \times TX \, , \ x \mapsto (A, 0_x) \quad \textrm{and} \quad
	d \mu |_{T_e D \times TX} \colon T_e D \times TX \to TX
\]
where $0_x \in TX$ denotes the zero vector inside $T_x X$ and 
$\mu \colon D \times X \to X$ denotes the $D$-action.

\begin{lemma}
	\label{lem.faithfull_action}
	Let $D$ be an algebraic group. If $D$ acts faithfully on a variety $X$, then the
	$\kk$-linear map $\Lie D \to \Ve(X)$, $A \mapsto \xi_A$ is injective.
\end{lemma}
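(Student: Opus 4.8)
The plan is to identify the kernel of the linear map $A \mapsto \xi_A$ with the Lie algebra of the kernel of the action, and then to exploit that in characteristic zero such a kernel is a reduced (smooth) group scheme, so that faithfulness on $\kk$-points already forces triviality on the infinitesimal level. First I would reformulate the vanishing $\xi_A = 0$ in terms of dual numbers. An element $A \in \Lie D = T_e D$ is the same datum as a $\kk[\epsilon]$-point $g_A \colon \Spec \kk[\epsilon] \to D$ (with $\epsilon^2 = 0$) reducing to $e$. Writing $X_\epsilon := X \times_{\Spec \kk} \Spec \kk[\epsilon]$, the point $g_A$ induces an automorphism $\sigma_A$ of $X_\epsilon$ over $\kk[\epsilon]$, namely ``multiplication by $g_A$''. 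Unwinding the defining formula~\eqref{Eq:vectorfield_to_H-action} of $\xi_A$, on each affine open $U \subseteq X$ the comorphism of $\sigma_A$ is $\id + \epsilon\, D_{\xi_A} \colon \OO(U)[\epsilon] \to \OO(U)[\epsilon]$, where $D_{\xi_A}$ is the derivation of $\OO(U)$ associated to $\xi_A|_U$ (recall $\Ve(U) = \Der_{\kk}(\OO(U))$ for $U$ affine). Hence $\xi_A = 0$ if and only if $\sigma_A = \id_{X_\epsilon}$, i.e. if and only if $g_A$ acts trivially on $X_\epsilon$. This first step is the routine but crucial computation; I would carry it out on an affine open cover, which suffices since $\Ve(X) \to \Der$ is injective locally.

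Next I would introduce the kernel $N \subseteq D$ of the action, i.e. the closed subgroup scheme representing the functor $R \mapsto \{\, d \in D(R) \mid d \text{ acts trivially on } X_R \,\}$. By the first step we have, for $A \in \Lie D$,
\[
	\xi_A = 0 \quad \Longleftrightarrow \quad g_A \in N(\kk[\epsilon]) \quad \Longleftrightarrow \quad A \in \Lie N \, ,
\]
so that the kernel of the linear map $\Lie D \to \Ve(X)$ equals $\Lie N$. It therefore remains to show $\Lie N = 0$.

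Finally I would conclude using the characteristic-zero hypothesis. Faithfulness of the action means precisely $N(\kk) = \{e\}$. Since $\operatorname{char}\kk = 0$, every group scheme of finite type over $\kk$ is smooth by Cartier's theorem; as $\kk$ is algebraically closed and $N(\kk) = \{e\}$, smoothness forces $N$ to be the reduced one-point scheme $\{e\}$, whence $\Lie N = T_e N = 0$. Combined with the previous paragraph this yields injectivity.

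The step I expect to be the main obstacle is the passage from faithfulness to infinitesimal injectivity: faithfulness is a statement about $\kk$-points, whereas the desired injectivity is an infinitesimal (first-order) statement, and bridging the two requires that the kernel subgroup scheme carry no nilpotent structure. This is exactly where characteristic zero enters, through reducedness of $N$. As a sanity check, the statement genuinely fails in positive characteristic, where infinitesimal (e.g. Frobenius) kernels act trivially yet have nonzero Lie algebra; so the hypothesis on $\kk$ is used essentially and cannot be removed.
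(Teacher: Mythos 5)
Your proof is correct, and it takes a genuinely different route from the paper's. The paper argues pointwise: it uses that the kernel of the differential $\mathrm{d}_e\mu_x \colon \Lie D \to T_xX$ of each orbit map equals $\Lie D_x$, so $\xi_A = 0$ forces $A \in \bigcap_{x\in X}\Lie D_x = \Lie\bigl(\bigcap_{x \in X} D_x\bigr)$, and faithfulness kills this intersection. You instead argue globally, identifying $\ker\bigl(\Lie D \to \Ve(X)\bigr)$ with $\Lie N$ for the scheme-theoretic kernel $N$ of the action via the dual-numbers computation (which is correct, and consistent with the paper's own formula $D_{\xi_A}(f)(x) = A(f\circ\mu_x)$), and then invoking Cartier's theorem. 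The one ingredient you assert without justification is that the kernel functor $N$ is representable by a closed subgroup scheme of $D$. This is true and standard: for affine $X$ one writes $\mu^\ast f = \sum_i p_i \otimes f_i$ with the $f_i$ linearly independent and sees that ``$d$ acts trivially'' is the vanishing of finitely many regular functions of $d$ for each $f$, hence a closed condition; for a general variety one covers $X$ by affine opens $U_i$ and notes that the condition that $d$ restrict to the inclusion on $(U_i)_R$ is the condition that $\{d\}\times (U_i)_R$ factor through the equalizer $E_i \subset D\times U_i$ of $\mu|_{D\times U_i}$ and $\mathrm{pr}_2$ (closed, since $X$ is separated), which is again cut out by coefficient equations. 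It is worth observing that the paper's pointwise formulation is designed exactly to sidestep this representability question: $\bigcap_{x}D_x$, the intersection of scheme-theoretic stabilizers of $\kk$-points, is manifestly a closed subgroup scheme. In exchange, your version makes the role of characteristic zero completely transparent (Cartier's theorem, plus your sanity check that Frobenius kernels violate the statement in characteristic $p$), whereas in the paper characteristic zero enters implicitly, both in the equality $\ker(\mathrm{d}_e\mu_x) = \Lie D_x$ (separability of orbit maps) and in commuting $\Lie$ with the intersection of the stabilizers.
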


\begin{proof}
	For each $x \in X$, the kernel of the differential $\textrm{d}_e \mu_x \colon \Lie D \to T_x X$ of
	the orbit map $\mu_x \colon D \to X$, $d \mapsto d x$ is equal to $\Lie D_x$ where
	$D_x$ denotes the stabilizer of $x$ in $D$. If $A \in \Lie D$ such that $\xi_A = 0$, then 
	$(\textrm{d}_e \mu_x)A = 0$ for each $x \in X$, i.e. $A \in \Lie D_x$ for each $x \in X$.
	As $D$ acts faithfully on $X$, we get $\{ e \} = \bigcap_{x \in X} D_x$ and thus
	$\{0\} = \Lie( \bigcap_{x \in X} D_x ) = \bigcap_{x \in X} \Lie(D_ x)$ which implies $A = 0$.
\end{proof}

A $\GG_a$-action on $X$ is called 
\emph{$H$-homo\-geneous of weight $\lambda \in \frak{X}(H)$} if 
\[
	h \circ \varepsilon(t) \circ h^{-1} = \varepsilon(\lambda(h) \cdot t) \quad 
	\textrm{for all $h \in H$ and all $t \in \GG_a$}
\]
where $\varepsilon \colon \GG_a \to \Aut(X)$ is the group homomorphism
induced by the $\GG_a$-action. 

\begin{lemma}
	\label{lem.H-homogeneous_Ga-action}
	Let $H$ be an algebraic group, $X$ a $H$-variety and
	$\rho$ a $H$-homogeneous $\GG_a$-action on $X$
	of weight $\lambda \in \frak{X}(H)$. Then the image
	of the previously introduced $\kk$-linear map 
	$\Lie \GG_a \to \Ve(X)$, $A \mapsto \xi_A$ associated to $\rho$
	lies in $\Ve(X)_{\lambda, H}$.
\end{lemma}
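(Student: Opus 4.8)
The plan is to check the defining condition of $\Ve(X)_{\lambda, H}$ directly: I will show that $h \cdot \xi_A = \lambda(h)\,\xi_A$ for every $h \in H$, where the $H$-action on $\Ve(X)$ is the one introduced in Subsection~\ref{sec.generalities_on_vectorfields}. Unwinding that action, it suffices to establish the pointwise equality
\[
	(h \cdot \xi_A)(x) = \mathrm{d}\varphi_h\big(\xi_A(\varphi_{h^{-1}}(x))\big) = \lambda(h)\,\xi_A(x)
	\qquad \textrm{for all } x \in X.
\]

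To do this, for $y \in X$ let $\mu_y \colon \GG_a \to X$, $t \mapsto \varepsilon(t)(y)$, be the orbit map, so that $\xi_A(y) = (\mathrm{d}_e \mu_y)(A)$ by~\eqref{Eq:vectorfield_to_H-action}. Putting $y := \varphi_{h^{-1}}(x)$ and noting $\mu_y(e) = y$, the chain rule gives $(h \cdot \xi_A)(x) = \mathrm{d}_e(\varphi_h \circ \mu_y)(A)$. The crux is to identify the morphism $\varphi_h \circ \mu_y \colon \GG_a \to X$. Rewriting the $H$-homogeneity relation $h \circ \varepsilon(t) \circ h^{-1} = \varepsilon(\lambda(h)\cdot t)$ as $\varphi_h \circ \varepsilon(t) = \varepsilon(\lambda(h)\,t) \circ \varphi_h$, I compute for each $t \in \GG_a$
\[
	(\varphi_h \circ \mu_y)(t) = \varphi_h\big(\varepsilon(t)(\varphi_{h^{-1}}(x))\big)
	= \varepsilon(\lambda(h)\,t)(x) = \mu_x(\lambda(h)\,t).
\]
Hence $\varphi_h \circ \mu_y = \mu_x \circ m_{\lambda(h)}$, where $m_c \colon \GG_a \to \GG_a$ denotes multiplication by the scalar $c$.

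Finally I would differentiate this identity of morphisms at $e \in \GG_a$. Since $m_{\lambda(h)}(e) = e$, the chain rule yields
\[
	\mathrm{d}_e(\varphi_h \circ \mu_y) = (\mathrm{d}_e \mu_x) \circ (\mathrm{d}_e m_{\lambda(h)}).
\]
As $\GG_a \cong \AA^1$ is one-dimensional and $m_{\lambda(h)}$ is multiplication by $\lambda(h)$, its differential at $e$ is again multiplication by $\lambda(h)$ on $\Lie \GG_a$, so $(\mathrm{d}_e m_{\lambda(h)})(A) = \lambda(h)\,A$. Therefore
\[
	(h \cdot \xi_A)(x) = (\mathrm{d}_e \mu_x)(\lambda(h)\,A) = \lambda(h)\,(\mathrm{d}_e \mu_x)(A) = \lambda(h)\,\xi_A(x),
\]
and as $x \in X$ was arbitrary we conclude $\xi_A \in \Ve(X)_{\lambda, H}$.

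I do not anticipate a genuine difficulty: the argument is a bookkeeping exercise with the chain rule once the orbit maps are in place. The one point demanding care is the variance, i.e. keeping track of the precise placement of $h$ and $h^{-1}$ so that the scalar appearing through the homogeneity relation is $\lambda(h)$ and not $\lambda(h^{-1})$; a misplaced inverse here would produce the wrong weight. The other elementary but essential ingredient is the observation that the differential of scalar multiplication on the one-dimensional group $\GG_a$ is again that same scalar.
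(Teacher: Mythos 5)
Your proof is correct and follows essentially the same route as the paper: both arguments encode the $H$-homogeneity as a commutative square $\varphi_h \circ \mu_y = \mu_{hy} \circ m_{\lambda(h)}$ between orbit maps and then differentiate at $e \in \GG_a$, using that the differential of scalar multiplication on $\Lie\GG_a$ is again that scalar. The only cosmetic difference is that you verify the semi-invariance pointwise at $y = \varphi_{h^{-1}}(x)$, while the paper states the differentiated identity at a general point and substitutes implicitly.
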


\begin{proof}
	As $\rho$ is $H$-homogeneous, we get for each $x \in X$ and each $h \in H$ 
	the following commutative diagram
	\[
		\xymatrix{
			\GG_a \ar[d]_{\mu_x} \ar[rr]^-{t \mapsto \lambda(h) t} && \GG_a \ar[d]^-{\mu_{hx}} \\
			X \ar[rr]^-{\varphi_h} && X 
		}	
	\]
	where $\varphi_h \colon X \to X$ denotes multiplication by $h$. Taking differentials in the
	neutral element $e \in \GG_a$
	gives $\textrm{d}_x \varphi_h \textrm{d}_e \mu_x = \lambda(h) \textrm{d}_e \mu_{hx}$
	for each $A \in \Lie \GG_a$. This implies that $h \cdot \xi_A(x) = \lambda(h) \xi_A(x)$
	for each $A \in \Lie \GG_a$ and thus the statement follows.
\end{proof}

\begin{lemma}
	\label{lem:D(X)_contained_in_the_weights_of_Ve^H(X)}
	Let $H$ be an algebraic group and let $N \subset H$ 
	be a normal subgroup such that the character group $\frak{X}(N)$ is trivial. 
	If $X$ is an irreducible $H$-variety, then
	\[
		D_H(X) = 
		\Bigset{\lambda \in \frak{X}(H)}{
			\begin{array}{l}
				\textrm{there is a non-trivial $H$-homogeneous} \\
				\textrm{$\GG_a$-action on $X$ of weight $\lambda$}
			\end{array}
		}
	\]
	is contained in the set of $H$-weights of non-zero vector fields in $\Ve^{N}(X)$ that are normalized
	by $H$.
\end{lemma}

\begin{proof}
	Let $\rho \colon \GG_a \times X \to X$ be a non-trivial $\GG_a$-action on $X$.
	By Lemma~\ref{lem.faithfull_action} and~\ref{lem.H-homogeneous_Ga-action} 
	there is a non-zero $\xi \in \Ve(X)$ such that 
	for each $h \in H$ we have $h \cdot \xi = \lambda(h) \xi$.
	Moreover, since $\frak{X}(N) = 0$, $\xi$ is $N$-invariant.
	Thus $D_H(X)$ is contained in the set of $H$-weights of non-zero vector fields in
	$\Ve^{N}(X)$ that are normalized by $H$.
\end{proof}

Now, assume that $X$ is an affine variety and fix some non-zero element $A_0 \in \Lie \GG_a$.
Moreover, denote by $\LND_{\kk}(\OO(X)) \subset \Der_\kk(\OO(X))$ the cone of locally nilpotent derivations
on $\OO(X)$, i.e. the cone in $\Der_\kk(\OO(X))$ of $\kk$-derivations $D$ of $\OO(X)$ such that
for all $f \in \OO(X)$ there is a $n = n(f) \geq 1$ such that $D^n(f) = 0$, where $D^n$ denotes the $n$-fold 
composition of $D$. There is a map
\[
	\{ \, \textrm{$\GG_a$-actions on $X$} \, \} \stackrel{1:1}{\longleftrightarrow} \LND_{\kk}(\OO(X)) \, , \quad \rho \mapsto D_{\xi_{A_0}}
\]
where $\xi_{A_0}$ is defined as in~\eqref{Eq:vectorfield_to_H-action} with respect to the $\GG_a$-action $\rho$. As for each $f \in \OO(X)$ we have that $D_{\xi_{A_0}}(f)$ is the morphism 
$x \mapsto A_0(f \circ \mu_x)$ (we interpret $A_0$ as a $\kk$-derivation of $\OO_{\GG_a, e} \to \kk$ in $e$),
it follows from \cite[Sect. 1.5]{Fr2017Algebraic-theory-o} that the above map is in fact a bijection.

\subsection{Finiteness results on modules of vector fields}

Let $G$ be an algebraic group. For this subsection, let $X$ be a  $G$-variety.
Note that $\Ve(X)$ is an 
\emph{$\OO(X)$-$G$-module} via the $\OO(X)$- and $G$-module structures given in \S\ref{sec.generalities_on_vectorfields}, 
i.e. $\Ve(X)$ is a $G$-module, it is an $\OO(X)$-module 
and both structures are compatible in the sense that 	 
\[
	g \cdot (f \cdot \xi) = (g \cdot f) \cdot (g \cdot \xi) \quad
	\textrm{for all $g \in G$, $f \in \OO(X)$ and $\xi \in \Ve(X)$.}
\]

\begin{lemma}
	\label{lem.finiteness_and_rationality_of_VecX}
	Assume that $X$ is a quasi-affine $G$-variety and that $\OO(X)$ is finitely generated as a $\kk$-algebra.
	Then the $\OO(X)$-$G$-module $\Ve(X)$ is finitely generated and rational, i.e.
	$\Ve(X)$ is finitely generated as an $\OO(X)$-module 
	and the $G$-representation $\Ve(X)$ is a sum of finite dimensional 
	rational $G$-subrepresentations.
\end{lemma}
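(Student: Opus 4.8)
The plan is to reduce to the affine case by passing to $X_{\aff} = \Spec\OO(X)$ and then to analyze the module of derivations of the finitely generated $\kk$-algebra $A := \OO(X)$. Since $\OO(X)$ is finitely generated, $X_{\aff}$ is an affine variety, and since $X$ is quasi-affine the canonical morphism $\iota \colon X \into X_{\aff}$ is an open immersion by Lemma~\ref{lem:Quasi-affine_irreducible}; moreover the $G$-action extends uniquely to $X_{\aff}$ by Lemma~\ref{lem:Quasi-affine_group_action} and $\iota$ is $G$-equivariant. I claim that the restriction map $r \colon \Ve(X_{\aff}) \to \Ve(X)$ is an isomorphism of $\OO(X)$-$G$-modules. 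It is $\OO(X)$-linear and $G$-equivariant, being restriction of sections. To see bijectivity, I use the $\kk$-linear map $\Ve(X) \to \Der_\kk(\OO(X))$, $\xi \mapsto D_\xi$. This map is injective: if $D_\xi = 0$ then $\xi(x)$ annihilates $d_x f$ for every $f \in \OO(X)$ and every $x \in X$; since $\iota$ is an immersion, $d_x\iota$ is injective and the cotangent space of $X_{\aff}$ at $\iota(x)$ is spanned by differentials of functions in $\OO(X_{\aff}) = \OO(X)$, forcing $\xi(x) = 0$. For $X_{\aff}$ affine the analogous map $\Ve(X_{\aff}) \to \Der_\kk(\OO(X_{\aff})) = \Der_\kk(\OO(X))$ is an isomorphism (see \S\ref{sec.generalities_on_vectorfields}), and since $D_{r(\eta)} = D_\eta$, injectivity of $\xi \mapsto D_\xi$ yields both injectivity and surjectivity of $r$. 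Thus it suffices to treat $X_{\aff}$, and I write $\Ve(X_{\aff}) = \Der_\kk(A)$ from now on.

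\emph{Finite generation.} As $A$ is a finitely generated $\kk$-algebra it is Noetherian, and the module of K\"ahler differentials $\Omega_{A/\kk}$ is a finitely generated $A$-module. Hence $\Der_\kk(A) = \Hom_A(\Omega_{A/\kk}, A)$ is a submodule of a finitely generated module over the Noetherian ring $A$, and therefore finitely generated as an $A$-module.

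\emph{Rationality.} By Proposition~\ref{prop.locally_finite_and_rational} the $G$-representation $A$ is rational, and I use repeatedly that rational $G$-modules are closed under quotients, under $G$-submodules, and under tensoring with finite dimensional $G$-representations, and that every finitely generated $A$-$G$-submodule of a rational $A$-$G$-module is spanned over $A$ by a finite dimensional $G$-stable subspace. Choose a finite dimensional $G$-stable subspace $W \subset A$ generating $A$ as a $\kk$-algebra; this gives a $G$-equivariant closed embedding $X_{\aff} \into W^\ast$ and, via the conormal sequence, a $G$-equivariant surjection $A \otimes_\kk W \to \Omega_{A/\kk}$. Hence $\Omega_{A/\kk}$ is rational, and its kernel $K \subset A \otimes_\kk W$ is a finitely generated rational $A$-$G$-module, so it is generated over $A$ by a finite dimensional $G$-stable subspace $U'$. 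We thus obtain a $G$-equivariant presentation $A \otimes_\kk U' \to A \otimes_\kk W \to \Omega_{A/\kk} \to 0$ by rational free $A$-$G$-modules. Applying the left-exact contravariant functor $\Hom_A(-, A)$ yields an exact sequence of $A$-$G$-modules
\[
	0 \to \Der_\kk(A) \to W^\ast \otimes_\kk A \to (U')^\ast \otimes_\kk A \, ,
\]
whose two right-hand terms are rational. As $\Der_\kk(A)$ is the kernel of a $G$-equivariant map between rational $A$-$G$-modules, it is rational, which finishes the proof.

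The main obstacle is the $G$-equivariant bookkeeping in the last paragraph: one must consistently produce finite dimensional $G$-stable generating subspaces (which relies on the rationality of $A$ from Proposition~\ref{prop.locally_finite_and_rational}) and verify that rationality passes through the quotient defining $\Omega_{A/\kk}$ and through the kernel defining $\Der_\kk(A)$. The reduction isomorphism $r$ also requires care to match the section-theoretic and derivation-theoretic descriptions of vector fields on the open subset $X \subset X_{\aff}$, which is exactly where quasi-affineness enters.
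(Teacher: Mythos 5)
Your proof is correct, but it is organized differently from the paper's. The paper never extends vector fields to $X_{\aff}$: it takes a $G$-equivariant closed embedding $X_{\aff} \into V$ into a rational representation (Kraft's embedding theorem, which is the same device as your $X_{\aff} \into W^\ast$), observes that $\mathrm{d}\iota \colon TX \to TV|_X$ embeds $\Ve(X)$ into $\Gamma(TV|_X) \simeq \Mor(X,V) \simeq \OO(X) \otimes_\kk V$ as an $\OO(X)$-$G$-submodule, and concludes at once from Noetherianity of $\OO(X)$ and the fact that $G$-submodules of rational modules are rational. You instead first prove the auxiliary statement that restriction $\Ve(X_{\aff}) \to \Ve(X)$ is an isomorphism of $\OO(X)$-$G$-modules (your argument for this, via injectivity of $\xi \mapsto D_\xi$ on the quasi-affine $X$ and the affine isomorphism $\Ve(X_{\aff}) \simeq \Der_\kk(\OO(X))$, is sound), and then argue purely algebraically with $\Der_\kk(A) = \Hom_A(\Omega_{A/\kk},A)$. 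This buys you a statement of independent interest (vector fields on $X$ are exactly derivations of $\OO(X)$, matching the LND picture used elsewhere in the paper), at the cost of an extra reduction step the paper's route avoids. Two small remarks: your construction of the $G$-stable generating space $U'$ for the kernel $K$ and the second term $(U')^\ast \otimes_\kk A$ are superfluous --- dualizing the single surjection $A \otimes_\kk W \to \Omega_{A/\kk} \to 0$ already gives the $G$-equivariant embedding $\Hom_A(\Omega_{A/\kk},A) \into W^\ast \otimes_\kk A$, which suffices for both finite generation and rationality (and is, under the identifications, the very same embedding the paper constructs geometrically); and you silently use that the paper's geometric $G$-action on $\Ve(X_{\aff})$ corresponds to the natural action $(g\cdot D)(f) = g(D(g^{-1}f))$ on $\Der_\kk(A)$ --- a routine but worth-stating compatibility.
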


\begin{proof}
	Since $\OO(X)$ is finitely generated, $X_{\aff} = \Spec \OO(X)$ is an affine variety
	that is endowed with a natural $G$-action, see Lemma~\ref{lem:Quasi-affine_group_action}. By
	\cite[Satz 2, II.2.S]{Kr1984Geometrische-Metho} there is a rational $G$-representation $V$ and a $G$-equivariant 
	closed embedding  $X_{\aff} \subseteq V$. We denote by
	\[
		\iota \colon X \to V
	\]
	the composition of the canonical open immersion $X \subset X_{\aff}$ with $X_{\aff} \subset V$.
	Note that the image of $\iota$ is locally closed in $V$ and that $\iota$ induces
	an isomorphism of $X$ onto that locally closed subset of $V$.
	Thus,  $\mathrm{d} \iota \colon T X \to T V |_X$ is a $G$-equivariant closed 
	embedding over $X$ which is linear on each fibre of $T X \to X$.
	Thus we get an $\OO(X)$-$G$-module embedding 
	\[
	\Ve(X) \to \Gamma(TV |_X) \, , \quad
	\xi \mapsto \mathrm{d} \iota \circ \xi,
	\]
	where $\Gamma(TV |_X)$ denotes the $\OO(X)$-$G$-module
	of sections of $TV |_X \to X$. However, since the vector bundle
	$TV |_X \to X$ is trivial, there is a 
	$\OO(X)$-$G$-module isomorphism
	\[
	\Gamma(TV |_X) \simeq \Mor(X, V), 
	\]
	where $G$ acts on $\Mor(X, V)$ via $g \cdot \eta = (x \mapsto g\eta(g^{-1}x))$.
	Now, the $\OO(X)$-$G$-module 
	$\Mor(X, V) \simeq \OO(X) \otimes_{\kk} V$ is finitely generated and 
	rational (see Proposition~\ref{prop.locally_finite_and_rational}), 
	and thus the statement follows.
\end{proof}

For the next result we recall the following definition.

\begin{definition}
	Let $G$ be an algebraic group. A closed subgroup $H \subset G$ 
	is called a \emph{Grosshans subgroup} if $G/H$ is quasi-affine and 
	$\OO(G/H) = \OO(G)^H$ is a finitely generated $\kk$-algebra.
\end{definition}

Let $G$ be a connected reductive algebraic group.
Examples of Grosshans subgroups of $G$ are unipotent radicals of parabolic
subgroups of $G$, see \cite[Theorem~16.4]{Gr1997Algebraic-homogene}. 
In particular, the unipotent radical $U$ of a Borel subgroup
$B \subset G$ is a Grosshans subgroup in $G$
(see also \cite[Theorem~9.4]{Gr1997Algebraic-homogene}). A very important property of Grosshans subgroups is the following:

\begin{proposition}[{\cite[Theorem~9.3]{Gr1997Algebraic-homogene}}]
	\label{prop:finite_generation_of_invariant}
	Let $A$ be a finitely generated $\kk$-algebra and let $G$
	be a connected reductive algebraic group that acts via $\kk$-algebra automorphisms on $A$
	such that $A$ becomes a rational $G$-module.
	If $H \subset G$ is a Grosshans subgroup, then the ring of $H$-invariants
	\[
		A^H = \set{a \in A}{\textrm{$h a = a$ for all $h \in H$}}
	\]
	is a finitely generated $\kk$-subalgebra of $A$.
\end{proposition}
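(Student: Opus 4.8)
The plan is to reduce the statement to the classical finiteness theorem of Hilbert and Nagata, which asserts that if a reductive group $G$ acts rationally by $\kk$-algebra automorphisms on a finitely generated $\kk$-algebra $R$, then $R^G$ is a finitely generated $\kk$-algebra. The bridge between this classical result and our situation is the transfer isomorphism
\[
	A^H \;\cong\; (A \otimes_{\kk} \OO(G/H))^G \, ,
\]
where $G$ acts diagonally, on $A$ via the given action and on $\OO(G/H)$ via left translation. Establishing this isomorphism is the heart of the argument, and I would carry it out first.

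To prove the transfer isomorphism I would recall that for any rational $G$-module $M$ there is a natural isomorphism
\[
	M \simto (M \otimes_{\kk} \OO(G))^G \, , \quad m \mapsto (g \mapsto g \cdot m) \, ,
\]
where $G$ acts on $\OO(G)$ by left translation and diagonally on the tensor product; this is Frobenius reciprocity for the coinduced module $\OO(G)$, and when $M$ is an algebra on which $G$ acts by automorphisms the map is an isomorphism of $\kk$-algebras. Applying this to $M = A$ and tracing through the residual right-translation action of $H$ on the factor $\OO(G)$ — which commutes with the diagonal $G$-action — one checks that $h \cdot_{\mathrm{right}} (g \mapsto g \cdot m) = (g \mapsto g \cdot (h \cdot m))$, so that the residual $H$-action on $(A \otimes \OO(G))^G$ corresponds exactly to the original $H$-action on $A$. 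Taking $H$-invariants on both sides, and then using that the left- and right-translation actions commute in order to interchange the invariants, I would obtain
\[
	A^H \cong \big( (A \otimes_{\kk} \OO(G))^{H, \mathrm{right}} \big)^G = (A \otimes_{\kk} \OO(G)^H)^G = (A \otimes_{\kk} \OO(G/H))^G \, ,
\]
where in the last two steps I used that $H$ acts trivially on the $A$-factor and that $\OO(G)^H = \OO(G/H)$ by the definition of a Grosshans subgroup.

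With the transfer isomorphism in hand, the finiteness is immediate: the algebra $A \otimes_{\kk} \OO(G/H)$ is finitely generated, being the tensor product of the finitely generated algebra $A$ with the algebra $\OO(G/H)$, which is finitely generated \emph{precisely because} $H$ is a Grosshans subgroup; and the diagonal $G$-action on it is rational since both tensor factors carry rational $G$-actions. Thus the Hilbert--Nagata theorem applies to $R = A \otimes_{\kk} \OO(G/H)$ and yields that $(A \otimes_{\kk} \OO(G/H))^G \cong A^H$ is a finitely generated $\kk$-algebra, as desired.

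The main obstacle I expect is the careful verification of the transfer isomorphism: checking $G$-invariance of the image, identifying the residual right-translation $H$-action on the invariant ring with the original $H$-action on $A$, and justifying the interchange of the two commuting sets of invariants. A secondary technical point is that $A$ is in general an infinite-dimensional rational $G$-module, so each of these verifications should be reduced to the finite-dimensional case by passing to the directed union of the finite-dimensional rational $G$-submodules of $A$ and checking compatibility under the transition maps.
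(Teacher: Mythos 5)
Your proposal is correct, but note that the paper itself offers no proof of this proposition at all: it is quoted verbatim as Theorem~9.3 of Grosshans's book \cite{Gr1997Algebraic-homogene}, and your argument --- the transfer isomorphism $A^H \cong (A \otimes_{\kk} \OO(G/H))^G$ followed by the Hilbert--Nagata finiteness theorem for the reductive group $G$ --- is precisely the standard proof given in that cited source. All the steps you flag as requiring care (invariance of the image of $m \mapsto (g \mapsto g \cdot m)$, identification of the residual right-translation $H$-action, interchange of the two commuting sets of invariants, and reduction to finite-dimensional rational submodules) check out, so there is nothing to correct.
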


\begin{proposition}
	\label{prop.MU_finitely_generated}
	Let $R$ be a finitely generated $\kk$-algebra and assume
	that a connected reductive algebraic group 
	$G$ acts via $\kk$-algebra automorphisms on $R$ such that
	$R$ becomes a rational $G$-module. Let $H \subset G$ be a 
	Grosshans subgroup.
	If $M$ is a finitely generated rational $R$-$G$-module, 
	then $M^H$ is a finitely generated $R^H$-module.
\end{proposition}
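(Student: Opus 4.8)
The plan is to reduce the statement to Proposition~\ref{prop:finite_generation_of_invariant} by packaging the module $M$ together with the algebra $R$ into a single finitely generated $\kk$-algebra carrying a rational $G$-action. The natural device is the \emph{trivial (square-zero) extension}
\[
	A := R \oplus M \, , \qquad (r,m)\cdot(r',m') := (rr',\ rm'+r'm) \, ,
\]
in which $M$ sits as an ideal with $M^2 = 0$ and $R$ as a subalgebra.

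First I would check that $A$ meets the hypotheses of Proposition~\ref{prop:finite_generation_of_invariant}. It is a finitely generated $\kk$-algebra: combining a finite set of $\kk$-algebra generators of $R$ with a finite set of $R$-module generators of $M$ (which exists by assumption), and using $M^2 = 0$, yields a finite generating set for $A$ as a $\kk$-algebra. The diagonal action $g\cdot(r,m) := (g r,\ g m)$ acts by $\kk$-algebra automorphisms precisely because $M$ is an $R$-$G$-module: the required identity $g\cdot(r m') = (g r)(g m')$ is exactly the compatibility axiom, so $g$ preserves the multiplication on $A$. Finally, $A = R \oplus M$ as $G$-modules, so rationality of $R$ and of $M$ gives rationality of $A$.

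Having verified the hypotheses, Proposition~\ref{prop:finite_generation_of_invariant} shows that $A^H$ is a finitely generated $\kk$-algebra, hence Noetherian by Hilbert's basis theorem. Since $H \subset G$ acts diagonally and invariants commute with direct sums, $A^H = R^H \oplus M^H$, and $M^H$ is an ideal of $A^H$ satisfying $(M^H)^2 = 0$.

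It then remains to extract the module finiteness. As $A^H$ is Noetherian, the ideal $M^H$ is finitely generated as an ideal, say $M^H = \sum_i A^H a_i$ with $a_i \in M^H$. Writing an arbitrary coefficient $b \in A^H$ as $b = r + m$ with $r \in R^H$ and $m \in M^H$, the products $m a_i$ lie in $(M^H)^2 = 0$, so $A^H a_i = R^H a_i$ and hence $M^H = \sum_i R^H a_i$ is finitely generated as an $R^H$-module, as claimed. I expect the only point requiring genuine care to be the verification that $A$ is a bona fide rational $G$-algebra—specifically that the diagonal action is by algebra automorphisms—since this is where the $R$-$G$-module compatibility is essential; the passage from ideal-finiteness to module-finiteness is then immediate from the square-zero condition.
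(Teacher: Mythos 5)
Your proposal is correct and follows essentially the same route as the paper: both form the trivial square-zero extension $A = R \oplus M$ (the paper writes it as $R \oplus \varepsilon M$), verify it is a finitely generated rational $G$-algebra, and invoke Proposition~\ref{prop:finite_generation_of_invariant} to conclude that $A^H = R^H \oplus M^H$ is a finitely generated $\kk$-algebra. The only (immaterial) difference is the last step: the paper extracts $R^H$-module generators of $M^H$ directly from $\kk$-algebra generators of $A^H$ using $\varepsilon^2=0$, whereas you pass through Noetherianity of $A^H$ and finite generation of $M^H$ as an ideal before using the square-zero relation--both are valid one-line arguments.
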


\begin{proof}
	We consider the $\kk$-algebra $A = R \oplus \varepsilon M$, where the 
	multiplication on $A$ is defined via
	\[
	(r + \varepsilon m) \cdot (q + \varepsilon n) =
	rq + \varepsilon (rn+qm) \, . 
	\]
	Since $R$ is a finitely generated $\kk$-algebra and since $M$
	is a finitely generated $R$-module, $A$ is a finitely generated $\kk$-algebra.
	Moreover, since $R$ and $M$ are rational $G$-modules, 
	$A$ is a rational $G$-module. Moreover, 
	$G$ acts via $\kk$-algebra automorphisms on $A$.
	Since $H$ is a Grosshans subgroup of $G$, it follows now
	by Proposition~\ref{prop:finite_generation_of_invariant}	
	%	\cite{Ha1967SOME-QUESTIONS-IN-}, \cite{Gr1973Observable-groups-}
	%	or \cite[Theorem~9.3]{Gr1997Algebraic-homogene} 
	that 
	\[
	A^H = R^H \oplus \varepsilon M^H
	\] 
	is a finitely generated $\kk$-algebra. Thus one can choose 
	finitely many elements $m_1, \ldots, m_k \in M^H$ 
	such that $\varepsilon m_1, \ldots, \varepsilon m_k$ generate $A^H$
	as an $R^H$-algebra. However, since $\varepsilon^2 = 0$, it follows
	that $m_1, \ldots, m_k$ generate $M^H$ as an $R^H$-module.
	
	As $M$ is a rational $G$-module, it follows that $M^H$ is a rational $H$-module.
\end{proof}

As an application of Lemma~\ref{lem.finiteness_and_rationality_of_VecX} and
Proposition~\ref{prop.MU_finitely_generated} we get the following 
finiteness result of $\Ve^{H}(X)$ for a Grosshans subgroup $H$ of a connected reductive
algebraic group.

\begin{corollary}
	\label{cor.VecUX_finitely_generated}
	Let $H$ be a Grosshans subgroup of a connected reductive algebraic group 
	$G$. If $X$ is a quasi-affine $G$-variety
	such that $\OO(X)$ is finitely generated as a $\kk$-algebra, then
	$\Ve^{H}(X)$ is a finitely generated $\OO(X)^{H}$-module.
\end{corollary}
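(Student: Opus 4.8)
The plan is to apply Proposition~\ref{prop.MU_finitely_generated} directly, taking $R := \OO(X)$ and $M := \Ve(X)$. First I would verify that the hypotheses of that proposition are satisfied. The ring $R = \OO(X)$ is a finitely generated $\kk$-algebra by assumption, and the connected reductive group $G$ acts on it via $\kk$-algebra automorphisms; by Proposition~\ref{prop.locally_finite_and_rational} this action is locally finite and rational, so $R$ is a rational $G$-module. The subgroup $H \subset G$ is Grosshans by hypothesis, so all the assumptions imposed on $R$, $G$ and $H$ are in place.

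Next I would invoke Lemma~\ref{lem.finiteness_and_rationality_of_VecX}, whose hypotheses are exactly our standing assumptions (namely $X$ quasi-affine and $\OO(X)$ finitely generated). It tells us that the $\OO(X)$-$G$-module $\Ve(X)$ is finitely generated and rational. Hence $M = \Ve(X)$ is a finitely generated rational $R$-$G$-module, which supplies the final hypothesis required by Proposition~\ref{prop.MU_finitely_generated}.

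With these verifications in hand, Proposition~\ref{prop.MU_finitely_generated} yields that $M^H$ is a finitely generated $R^H = \OO(X)^H$-module. The concluding step is to recall from \S\ref{sec.generalities_on_vectorfields} that $\Ve^{H}(X)$ is by definition the space $\Ve(X)_0$ of $H$-invariant vector fields, that is, precisely the invariants $M^H$ of the restricted $H$-action. This identification transfers the statement verbatim and completes the argument.

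I do not expect a genuine obstacle here, since the substantive work is already carried by the two cited results: Lemma~\ref{lem.finiteness_and_rationality_of_VecX} establishes finite generation and rationality of $\Ve(X)$ as an $\OO(X)$-$G$-module, and Proposition~\ref{prop.MU_finitely_generated} passes finite generation to the ring of $H$-invariants. The only point demanding care is the bookkeeping of the intertwined module structures: confirming that $\Ve(X)$ is the $R$-$G$-module actually fed into Proposition~\ref{prop.MU_finitely_generated}, and that $\Ve^{H}(X)$ coincides with $M^{H}$ for the restriction of the $G$-action to $H$, rather than with some larger or smaller subspace.
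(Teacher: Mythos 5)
Your proof is correct and coincides with the paper's own argument: the corollary is stated there precisely as an immediate application of Lemma~\ref{lem.finiteness_and_rationality_of_VecX} (which makes $\Ve(X)$ a finitely generated rational $\OO(X)$-$G$-module) followed by Proposition~\ref{prop.MU_finitely_generated} with $R=\OO(X)$, $M=\Ve(X)$, noting $\Ve^H(X)=\Ve(X)^H$. Your extra care in checking the hypotheses (rationality of the $G$-action on $\OO(X)$ via Proposition~\ref{prop.locally_finite_and_rational}, and the identification of $\Ve^H(X)$ with $M^H$) is exactly the bookkeeping the paper leaves implicit.
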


\subsection{Vector fields normalized by a group action with an open orbit}

For this subsection, let $H$ be an algebraic group and let $X$ be an irreducible 
$H$-variety which contains an open $H$-orbit. 
Moreover, fix a character $\lambda$ of $H$. 
In this section we give an upper bound on the dimension of 
$\Ve(X)_\lambda = \Ve(X)_{\lambda, H}$. 

\begin{lemma}
	\label{lem:Normalized_Derivations}
	Fix $x_0 \in X$ that lies in the open $H$-orbit and let $H_{x_0}$ be 
	the stabilizer of $x_0$ in $H$. Then, there exists an injection
	of $\Ve(X)_\lambda$ into the $H_{x_0}$-eigenspace of the tangent space $T_{x_0} X$ 
	of weight $\lambda$ given by
	\[
		\xi \mapsto \xi(x_0) \, .
	\]
	In particular, the dimension of $\Ve(X)_\lambda$ is smaller than or equal to the
	dimension of the
	$H_{x_0}$-eigenspace of weight $\lambda$ of $T_{x_0} X$.
\end{lemma}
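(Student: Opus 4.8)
The plan is to unwind the normalization condition defining $\Ve(X)_\lambda$ and to evaluate it at the fixed point $x_0$. Recall that $\xi \in \Ve(X)_\lambda$ means $h \cdot \xi = \lambda(h)\xi$ for all $h \in H$, which by the defining diagram is the identity $\mathrm{d}\varphi_h \circ \xi = \lambda(h)\, \xi \circ \varphi_h$, i.e.
\[
	\mathrm{d}\varphi_h(\xi(x)) = \lambda(h)\, \xi(\varphi_h(x)) \quad \textrm{for all } x \in X, \ h \in H.
\]
First I would check that $\xi \mapsto \xi(x_0)$ lands in the claimed eigenspace. For $h \in H_{x_0}$ we have $\varphi_h(x_0) = x_0$, so $\mathrm{d}\varphi_h$ restricts to the linear action of $H_{x_0}$ on $T_{x_0}X$. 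Setting $x = x_0$ in the displayed identity gives $\mathrm{d}\varphi_h(\xi(x_0)) = \lambda(h)\xi(x_0)$, which is exactly the statement that $\xi(x_0)$ lies in the $H_{x_0}$-eigenspace of $T_{x_0}X$ of weight $\lambda$. Hence the evaluation map has the asserted target, and it is visibly $\kk$-linear.

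For injectivity, suppose $\xi \in \Ve(X)_\lambda$ with $\xi(x_0) = 0$; I want to conclude $\xi = 0$. The key point is that the normalization relation propagates the vanishing across the whole open orbit. Every point of $H x_0$ has the form $\varphi_h(x_0)$, and solving the displayed identity for $\xi(\varphi_h(x_0))$ yields
\[
	\xi(h x_0) = \lambda(h)^{-1}\, \mathrm{d}\varphi_h(\xi(x_0)) = 0.
\]
Thus $\xi$ vanishes identically on the orbit $H x_0$. Since $X$ is irreducible and $H x_0$ is open and non-empty, it is dense in $X$. The zero locus $\{x \in X : \xi(x) = 0\}$ is closed, being the locus where the section $\xi$ agrees with the zero section of $TX \to X$; hence it contains the closure of $H x_0$, namely all of $X$. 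Therefore $\xi = 0$, which proves injectivity, and the dimension bound then follows immediately.

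The argument is short, so the only points demanding care are bookkeeping ones. I expect the main (mild) obstacle to be getting the $\lambda(h)$ factors and the direction of $\varphi_h$ versus $\varphi_{h^{-1}}$ exactly right when passing between the module-theoretic formulation $h \cdot \xi = \lambda(h)\xi$ and the pointwise relation above; it is easy to invert $\lambda$ or $h$ by mistake. The other point worth stating explicitly is the density step: it uses both the irreducibility of $X$ (so that the open orbit is dense) and the fact that $\xi$ is an algebraic section, so that its zero locus is genuinely closed — a vector field vanishing on a dense subset of an irreducible variety then vanishes everywhere.
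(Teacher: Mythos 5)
Your proposal is correct and follows essentially the same route as the paper: the paper's proof likewise evaluates the normalization identity $\lambda(h)\,\xi(hx_0) = \mathrm{d}\varphi_h\,\xi(x_0)$ at $x_0$ to get the eigenvector claim, and its terse remark that ``$\xi$ is uniquely determined by $\xi(x_0)$'' is exactly your injectivity argument (propagation along the orbit, then density of the open orbit in the irreducible $X$ plus closedness of the zero locus), which you have merely spelled out in full.
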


\begin{proof}
	Let $\xi \in \Ve(X)_\lambda$. By definition we have for all $h \in H$
	\[
		\label{eq:equivariancy}
		\tag{$\varobar$}
		\lambda(h) \xi(hx_0) = \textrm{d} \varphi_h \xi(x_0)
	\]
	where $\varphi_h \colon X \to X$ denotes the automorphism given by
	multiplication with $h$. Since $x_0$ lies in the open $H$-orbit and $X$ is irreducible,
	$\xi$ is uniquely determined by $\xi(x_0)$. Moreover,~\eqref{eq:equivariancy} implies that $\xi(x_0)$ is a
	$H_{x_0}$-eigenvector of weight $\lambda$ of $T_{x_0} X$.
\end{proof}

\section{Automorphism group of a variety and root subgroups}
\label{sec.AutomorphismGroups}

Let $X$ be a variety and denote by $\Aut(X)$ its automorphism group. 
A subgroup $H \subset \Aut(X)$ is called \emph{algebraic subgroup of $\Aut(X)$} if 
$H$ has the structure of an algebraic group such that the
action $H \times X \to X$ is a regular action of the algebraic group $H$ on $X$. It follows from~\cite{Ra1964A-note-on-automorp} (see also \cite[Theorem 2.9]{KrReSa2018Is-the-affine-spac})
that this algebraic group structure on $H$ is unique up to algebraic group isomorphisms.

Let $X$, $Y$ be varieties.
We say that a group homomorphism $\theta \colon \Aut(X) \to \Aut(Y)$ \emph{preserves algebraic subgroups} if
for each algebraic subgroup $H \subset \Aut(X)$ its image $\theta(H)$ is an algebraic subgroup of $\Aut(Y)$
and if the restriction $\theta|_{H} \colon H \to \theta(H)$ is a homomorphism of algebraic groups. 
%If $\theta \colon \Aut(X) \to \Aut(Y)$ is a group isomorphism that preserves algebraic groups, then
%$\theta |_H \colon H \to \theta(H)$ is an isomorphism of algebraic groups for all algebraic groups
%$H \subset \Aut(X)$.
We say that a group isomorphism $\theta \colon \Aut(X) \to \Aut(Y)$ 
\emph{preserves algebraic subgroups}, if both 
homomorphisms $\theta$ and $\theta^{-1}$ preserve algebraic subgroups.

Assume now that $X$ is an $H$-variety for some algebraic group $H$ and that 
$U_0 \subset \Aut(X)$ is
a one-parameter unipotent subgroup, i.e. an algebraic subgroup of $\Aut(X)$ that
is isomorphic to $\GG_a$. If for some isomorphism $\GG_a \simeq U_0$
of algebraic groups the induced $\GG_a$-action on $X$ is 
$H$-homogeneous of weight $\lambda \in \frak{X}(H)$, then we call 
$U_0$ a \emph{root subgroup with respect to $H$ 
of weight $\lambda$} (see \S\ref{sec.GroupActionsAndVectorfields}). Note that this
definition does not depend on the choice of the isomorphism $\GG_a \simeq U_0$. This notion goes back
to \name{Demazure} \cite{De1970Sous-groupes-algeb}.

%If $U_0$ is normalized by $H$ and if $H U_0 \subset \Aut(X)$
%is an algebraic subgroup, then there is a unique character
%$\lambda \colon H \to \GG_m$ such that for each isomorphism 
%$\varepsilon \colon \GG_a \simeq U_0 \subset \Aut(X)$ we have 
%\[
%	h \circ \varepsilon(t) \circ h^{-1} = \varepsilon(\lambda(h) \cdot t) \quad 
%	\textrm{for all $h \in H$ and all $t \in \GG_a$} \, . 
%\]
%We say that $\lambda$ is the \emph{weight} of $U_0$ (with respect to $H$) and that
%$U_0$ is a \emph{root subgroup} of $\Aut(X)$ of weight $\lambda$ (with respect to $H$).

\begin{lemma}
	\label{lem.auto_perserves_weights}
	Let $X$, $Y$ be $H$-varieties for some algebraic group $H$. 
	If $\theta \colon \Aut(X) \to \Aut(Y)$ is a group homomorphism that
	preserves algebraic subgroups and if $\theta$ is compatible with the $H$-actions
	in the way that
	\[
		\xymatrix@=10pt{
			& H \ar[ld] \ar[rd] & \\
			\Aut(X) \ar[rr]^-{\theta} && \Aut(Y) \, .
		}
	\]
	commutes, then for any root subgroup $U_0 \subset \Aut(X)$ with respect to
	$H$, the image $\theta(U_0)$ is either the trivial group or
	a root subgroup with respect to $H$ of  the same weight as $U_0$.
\end{lemma}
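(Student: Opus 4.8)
The plan is to transport the parametrization of $U_0$ through $\theta$ and verify the homogeneity condition on $Y$ directly. Denote by $\rho_X \colon H \to \Aut(X)$ and $\rho_Y \colon H \to \Aut(Y)$ the homomorphisms given by the two $H$-actions; the commuting triangle in the statement is precisely the identity $\theta \circ \rho_X = \rho_Y$. Fix an isomorphism $\varepsilon \colon \GG_a \simeq U_0$ realizing the weight $\lambda$, so that $\rho_X(h) \circ \varepsilon(t) \circ \rho_X(h)^{-1} = \varepsilon(\lambda(h)\, t)$ for all $h \in H$ and $t \in \GG_a$ (this is the defining property of a root subgroup of weight $\lambda$).

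First I would pin down the shape of the image. Since $\theta$ preserves algebraic subgroups, $\theta(U_0)$ is an algebraic subgroup of $\Aut(Y)$ and $\theta|_{U_0} \colon U_0 \to \theta(U_0)$ is a homomorphism of algebraic groups. As $U_0 \simeq \GG_a$ and $\kk$ has characteristic zero, $\GG_a$ admits no nontrivial proper algebraic quotients, so $\theta(U_0)$ is either trivial or isomorphic to $\GG_a$. In the trivial case there is nothing left to prove. Otherwise $\theta|_{U_0}$ is an isomorphism onto $\theta(U_0)$, whence $\theta(U_0)$ is a one-parameter unipotent subgroup and $\varepsilon' := \theta \circ \varepsilon \colon \GG_a \to \Aut(Y)$ is an isomorphism onto it.

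The core of the argument is then a single conjugation identity. Using $\rho_Y(h) = \theta(\rho_X(h))$ together with the fact that $\theta$ is a group homomorphism, I compute for all $h \in H$ and $t \in \GG_a$ that
\[
	\rho_Y(h) \circ \varepsilon'(t) \circ \rho_Y(h)^{-1}
	= \theta\!\left(\rho_X(h) \circ \varepsilon(t) \circ \rho_X(h)^{-1}\right)
	= \theta(\varepsilon(\lambda(h)\, t))
	= \varepsilon'(\lambda(h)\, t) \, ,
\]
where the middle equality is the $H$-homogeneity of $U_0$ of weight $\lambda$. This is exactly the assertion that the $\GG_a$-action on $Y$ parametrized by $\varepsilon'$ is $H$-homogeneous of weight $\lambda$, so $\theta(U_0)$ is a root subgroup with respect to $H$ of weight $\lambda$, as required.

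I do not expect a serious obstacle here; the only point demanding care is the structure-of-the-image step, where one must invoke that in characteristic zero every nonzero algebraic-group homomorphism out of $\GG_a$ has image isomorphic to $\GG_a$, so that $\varepsilon'$ genuinely parametrizes $\theta(U_0)$ rather than collapsing it. Everything else is the formal manipulation above, which uses nothing beyond the hypotheses that $\theta$ is a homomorphism preserving algebraic subgroups and is compatible with the two $H$-actions.
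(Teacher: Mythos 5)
Your proof is correct and follows essentially the same route as the paper's: the same conjugation identity $\theta(\rho_X(h)\circ\varepsilon(t)\circ\rho_X(h)^{-1}) = \varepsilon'(\lambda(h)\,t)$ combined with the observation that, in characteristic zero, a nontrivial algebraic-group homomorphism out of $\GG_a$ onto $\theta(U_0)$ must be an isomorphism. The only difference is cosmetic: you settle the structure of the image before the conjugation computation, while the paper does it afterwards.
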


\begin{proof}
	We can assume that $\theta(U_0)$ is not the trivial group.
	Hence, $\theta(U_0)$ is a one-parameter unipotent group. 
	
	Let $\varepsilon \colon \GG_a \simeq U_0 \subset \Aut(X)$ 
	be an isomorphism and let 
	$\lambda \colon H \to \GG_m$ be the weight of $U_0$. Then we have for each $t \in \GG_a$	
	\[
		h \circ \theta(\varepsilon(t)) \circ h^{-1} =
		\theta(h \circ \varepsilon(t) \circ h^{-1}) = \theta(\varepsilon(\lambda(h)\cdot t)) \, .
	\]
	Since $\theta |_{U_0} \colon U_0 \to \theta(U_0)$ is a surjective 
	homomorphism of algebraic groups that are both isomorphic to $\GG_a$ (and since the ground field is of
	characteristic zero), $\theta |_{U_0}$ is in fact an isomorphism. 
	Thus $\theta \circ \varepsilon \colon \GG_a \simeq \theta(U_0) \subset \Aut(X)$ is an isomorphism
	and hence $\lambda$ is the weight of $\theta(U_0)$ with respect to $H$.
\end{proof}

\section{Homogeneous $\GG_a$-actions on quasi-affine toric varieties}

In this section, we give a description of the homogeneous $\GG_a$-actions
on a quasi-affine toric variety. Throughout this section, we denote by $T$ an algebraic torus.
Recall that a $T$-toric variety is a $T$-spherical variety. 
A $\GG_a$-action is called 
\emph{homogeneous} if it is $T$-homogeneous 
of some weight $\lambda \in \frak{X}(T)$, see~\S\ref{sec.GroupActionsAndVectorfields}.

Let $X$ be a toric variety. In case $X$ is affine, Liendo gave a full description
of all homogeneous $\GG_a$-action. In case $X$ is quasi-affine, $X_{\aff} = \Spec(\OO(X))$ is an affine
$T$-toric variety by Lemma~\ref{lem:Spherical_quasi-affine}. 
Moreover, every homogeneous $\GG_a$-action
on $X$ extends uniquely to a homogeneous $\GG_a$-action on $X_{\aff}$ by Lemma~\ref{lem:Quasi-affine_group_action}.
Thus we are led to the problem of describing the homogeneous $\GG_a$-actions on $X_{\aff}$ that preserve
the open subvariety $X$.

In order to give this description we have to do some preparation.
First, we give a description of $X_{\aff}$ in case $X$ is toric and
give a characterization, when $X$ is quasi-affine. For this, let us introduce some basic terms
from toric geometry. As a reference we take~\cite{Fu1993Introduction-to-to} and~\cite{CoLiSc2011Toric-varieties}.

Note that $M = \frak{X}(T) = \Hom_\ZZ(N, \ZZ)$ for a free abelian group $N$ of rank $\dim T$ 
and denote by $M_{\RR} = M \otimes_{\ZZ} \RR$, $N_{\RR} = N \otimes_{\ZZ} \RR$ the extensions to $\RR$.
Moreover, let
\[
	M_{\RR} \times N_{\RR} \to \RR \, , \quad
	(u, v) \mapsto \langle u, v \rangle
\]
be the canonical $\RR$-bilinear form. Denote by $\kk[M]$ the $\kk$-algebra with basis $\chi^m$ for all $m \in M$
and multiplication $\chi^m \cdot \chi^{m'} = \chi^{m+m'}$. Note that there is an identification
\[
	T = \Spec \kk[M] \, .
\]

Let $\sigma \subset N_{\RR}$ be a \emph{strongly convex rational
polyhedral cone} in $N_{\RR}$, i.e. it is a convex rational polyhedral cone with respect to the lattice $N \subset N_{\RR}$ and $\sigma$ contains no non-zero linear subspace of $N_\RR$. Then its dual
\[
	\sigma^\vee = \set{u \in M_{\RR}}{\textrm{$\langle u, v \rangle \geq 0$ for all $v \in \sigma$}}
\]
is a convex rational polyhedral cone in $M_\RR$.
Denote by $\sigma_M^\vee$ the intersection of $\sigma^\vee$ with $M$ inside $M_{\RR}$.
We can then associate to $\sigma$ a toric variety
\[
	X_\sigma = \Spec \kk[\sigma_M^\vee], \quad \textrm{where} \quad
	\kk[\sigma_M^\vee] = \bigoplus_{m \in \sigma_M^\vee} \kk \chi^m \subset \kk[M] \, .
\]
The torus $T$  acts then on $X_{\sigma}$ with an open orbit where this action is induced by the
coaction 
$\kk[\sigma_M^\vee] \to \kk[\sigma_M^\vee] \otimes_{\kk} \kk[M]$, 
$\chi^u \mapsto \chi^u \otimes \chi^u$. Note that we have a order-reversing 
bijection between the faces of $\sigma$ and the faces of its dual $\sigma^\vee$:
\[
	\{ \, \textrm{faces of $\sigma$} \, \} \stackrel{1:1}{\longleftrightarrow} 
	\{ \, \textrm{faces of $\sigma^\vee$} \, \} \, , \quad
	\tau \mapsto \sigma^\vee \cap \tau^{\bot}
\]
where $\tau^\bot$ consists of those $u \in M_\RR$ that satisfy $\sprod{u}{v} = 0$ for all $v \in \tau$,
see \cite[Sect.~1.2~(10)]{Fu1993Introduction-to-to}. Moreover, each face $\tau \subset \sigma$
determines an orbit of dimension $n - \dim(\tau)$ of the $T$-action on $X_\sigma$ (see \cite[\S3.1]{Fu1993Introduction-to-to}). 
We denote its closure in $X_\sigma$ by $V(\tau)$. In particular, $V(\tau)$ is an irreducible closed
$T$-invariant subset of $X_{\sigma}$.

More generally, for a fan $\Sigma$ of strongly convex rational polyhedral 
cones in $N_{\RR}$ we denote by $X_\Sigma$ its 
associated toric variety, which is covered by the open affine toric subvarieties $X_{\sigma}$
where $\sigma$ runs through the cones in $\Sigma$.

\begin{lemma}
	\label{lem.toric_qusi-affine}
	Let $X = X_{\Sigma}$ be a toric variety for a fan $\Sigma$ of strongly convex rational 
	polyhedral cones in $N_{\RR}$. Denote by $\sigma_1, \ldots, \sigma_r \subset N_{\RR}$
	the maximal cones in $\Sigma$ and set
	\[
		\sigma = \Conv \bigcup_{i=1}^r \sigma_i \subset N_{\RR} \, .
	\]
	Then:
	\begin{enumerate}[leftmargin=*]
		\item \label{lem.toric_qusi-affine1} We have $X_{\aff} = X_\sigma$
		and the canonical morphism $\iota \colon X \to X_{\aff}$
		is induced by the embeddings $\sigma_i \subset \sigma$ for $i=1, \ldots, r$.
		\item \label{lem.toric_qusi-affine2} 
		The toric variety $X$ is quasi-affine if and only if each $\sigma_i$ is a face of $\sigma$.
		\item \label{lem.toric_qusi-affine3}
		If $X$ is quasi-affine, then the irreducible components of $X_{\aff} \setminus X$ are the
		closed sets of the form $V(\tau)$ where $\tau$ is a minimal face of $\sigma$ with $\tau \not\in \Sigma$.
		\item \label{lem.toric_qusi-affine4}
		If $X$ is quasi-affine, then each face $\tau$ of $\sigma$ with $\tau \not\in \Sigma$
		has dimension at least $2$. In particular, 
		$X_{\aff} \setminus X$ is a closed subset of codimension at least $2$
		in $X_{\aff}$.
	\end{enumerate}
\end{lemma}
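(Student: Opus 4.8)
The plan is to translate each statement into the combinatorics of the fan and its dual via the standard toric dictionary, citing \cite{Fu1993Introduction-to-to} and \cite{CoLiSc2011Toric-varieties}. For \eqref{lem.toric_qusi-affine1} I would compute the global sections directly: since the maximal cones $\sigma_1,\dots,\sigma_r$ cover $X_\Sigma$, one has $\OO(X_\Sigma)=\bigcap_{i=1}^r \kk[(\sigma_i)^\vee_M]=\kk[\bigcap_{i=1}^r (\sigma_i)^\vee_M]$ inside $\kk[M]$. The key elementary observation is the duality identity $\bigcap_{i=1}^r \sigma_i^\vee=(\Conv\bigcup_{i=1}^r\sigma_i)^\vee=\sigma^\vee$, which holds because a linear functional is nonnegative on a convex cone if and only if it is nonnegative on a generating set. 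Intersecting with $M$ gives $\OO(X_\Sigma)=\kk[\sigma^\vee_M]=\OO(X_\sigma)$, so $X_{\aff}=X_\sigma$, and the inclusions $\kk[\sigma^\vee_M]\subset\kk[(\sigma_i)^\vee_M]$ dual to $\sigma_i\subset\sigma$ realize $\iota$ as claimed.

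For \eqref{lem.toric_qusi-affine2} I would invoke Lemma~\ref{lem:Quasi-affine_irreducible}: $X$ is quasi-affine if and only if $\iota\colon X\to X_\sigma$ is an open immersion. Combining this with the covering $X=\bigcup_i X_{\sigma_i}$ and part~\eqref{lem.toric_qusi-affine1}, the restriction of $\iota$ to each chart is exactly the toric morphism $X_{\sigma_i}\to X_\sigma$ induced by $\sigma_i\subset\sigma$. The standard fact that such a toric morphism is an open immersion precisely when $\sigma_i$ is a face of $\sigma$ then yields both directions: if each $\sigma_i$ is a face, the individual open immersions glue to an open immersion of $X$; conversely, if $\iota$ is an open immersion, each composition $X_{\sigma_i}\hookrightarrow X\xrightarrow{\iota}X_\sigma$ is an open immersion, forcing $\sigma_i$ to be a face. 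For the converse one uses that a $T$-invariant affine open subset of $X_\sigma$ is necessarily of the form $X_\tau$ for a face $\tau\preceq\sigma$.

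Parts \eqref{lem.toric_qusi-affine3} and \eqref{lem.toric_qusi-affine4} rest on the orbit--cone correspondence. Writing $O(\gamma)$ for the orbit attached to a face $\gamma\preceq\sigma$ (of dimension $n-\dim\gamma$, where $n:=\rank N$) and $V(\gamma)=\overline{O(\gamma)}$, the affine chart attached to a face decomposes as $X_\tau=\bigsqcup_{\gamma\preceq\tau}O(\gamma)$. Assuming $X$ quasi-affine, so that each $\sigma_i$ is a face of $\sigma$ and the faces of the $\sigma_i$ are exactly the cones of $\Sigma$, I would deduce $X=\bigsqcup_{\gamma\in\Sigma}O(\gamma)$ and hence $X_\sigma\setminus X=\bigsqcup_{\tau\preceq\sigma,\ \tau\notin\Sigma}O(\tau)$. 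Since a fan is closed under passing to faces, $\tau\notin\Sigma$ forces every $\gamma\succeq\tau$ to lie outside $\Sigma$, so $V(\tau)=\bigsqcup_{\gamma\succeq\tau}O(\gamma)\subset X_\sigma\setminus X$; thus $X_{\aff}\setminus X=\bigcup_{\tau\notin\Sigma}V(\tau)$, and because $V(\tau)\subset V(\tau')$ exactly when $\tau'\preceq\tau$, the irreducible components are the $V(\tau)$ for the minimal faces $\tau\notin\Sigma$, giving \eqref{lem.toric_qusi-affine3}. For \eqref{lem.toric_qusi-affine4} I would observe that every extremal ray of $\sigma=\Conv\bigcup_i\sigma_i$ is generated by one of the ray generators of the $\sigma_i$, hence is a face of some $\sigma_i\in\Sigma$ and therefore lies in $\Sigma$; together with $\{0\}\in\Sigma$ this shows that any $\tau\notin\Sigma$ satisfies $\dim\tau\geq 2$, so $\codim V(\tau)=\dim\tau\geq 2$.

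The main obstacle I anticipate is the careful bookkeeping in \eqref{lem.toric_qusi-affine3}: one must keep straight that the closure relation on orbits reverses the face order while the chart decomposition respects it, and then correctly extract the irreducible components as the orbit closures of the minimal cones outside $\Sigma$. The converse implication in \eqref{lem.toric_qusi-affine2}, which requires identifying the $T$-invariant affine opens of $X_\sigma$ with the faces of $\sigma$, is the other point demanding care.
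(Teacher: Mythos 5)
Your proposal is correct, and for parts \eqref{lem.toric_qusi-affine1}, \eqref{lem.toric_qusi-affine2} and \eqref{lem.toric_qusi-affine4} it coincides with the paper's argument: the same computation $\OO(X)=\bigcap_i \kk[(\sigma_i)^\vee_M]=\kk[\sigma^\vee_M]$ via the duality $\bigl(\Conv\bigcup_i\sigma_i\bigr)^\vee=\bigcap_i\sigma_i^\vee$, the same reduction through Lemma~\ref{lem:Quasi-affine_irreducible} to the chart maps $X_{\sigma_i}\to X_\sigma$ being open immersions (the paper cites \cite[\S1.3, Lemma and Exercise on p.~18]{Fu1993Introduction-to-to} for the two directions of the face criterion you invoke), and the same observation that every extremal ray of $\sigma$ is spanned by a ray generator of some $\sigma_i$ and hence lies in $\Sigma$. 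The genuine divergence is in part \eqref{lem.toric_qusi-affine3}. The paper never decomposes $X$ itself into orbits; it proves $V(\tau)\cap X=\varnothing$ for a face $\tau\not\in\Sigma$ by a character computation: since each $\sigma_i$ is a face of $\sigma$, it writes $X_{\sigma_i}=X_\sigma\setminus Z_{X_\sigma}(\chi^{u_i})$ with $u_i^\bot\cap\sigma=\sigma_i$, notes that $\tau\not\subset\sigma_i$ forces $u_i\in\sigma_M^\vee\setminus\tau^\bot$, and concludes that $\chi^{u_i}$ vanishes on $V(\tau)$; the reverse inclusion uses that $V(\eta)$ meets $X_{\sigma_i}$ whenever $\eta$ is a face of $\sigma_i$ (citing \cite[\S3.1]{Fu1993Introduction-to-to}), together with the fact that the closed $T$-invariant set $X_{\aff}\setminus X$ is a union of orbit closures. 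Your route via the full orbit--cone correspondence, $X=\bigsqcup_{\gamma\in\Sigma}O(\gamma)$ inside $X_\sigma=\bigsqcup_{\gamma\preceq\sigma}O(\gamma)$ and $V(\tau)=\bigsqcup_{\gamma\succeq\tau}O(\gamma)$, is more direct and makes the identification of the irreducible components with the minimal faces outside $\Sigma$ completely transparent; its only hidden cost is the standard compatibility (worth an explicit citation, e.g.\ \cite[\S3.2]{CoLiSc2011Toric-varieties}) that the open immersion $\iota$ identifies the orbit attached to $\gamma\preceq\sigma_i$ in $X_{\sigma_i}$ with the orbit attached to $\gamma$ viewed as a face of $\sigma$, which is what licenses taking the set difference cone by cone. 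The paper's version trades this bookkeeping for an explicit localization argument, while yours buys a uniform combinatorial picture; both are sound.
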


\begin{proof}[Proof of Lemma~\ref{lem.toric_qusi-affine}]
	\eqref{lem.toric_qusi-affine1}: Since the affine toric varieties $X_{\sigma_1}, \ldots, X_{\sigma_r}$
	cover $X$, we get inside $\OO(T) = \kk[M]$:
	\[
	\OO(X) = \bigcap_{i=1}^r \OO(X_{\sigma_i}) = 
	\bigcap_{i=1}^r \kk[(\sigma_i)_M^{\vee}] = 
	\kk\left[ \left( \bigcap_{i=1}^r \sigma_i^{\vee} \right) \cap M \right] \, .
	\]
	Since
	\[
	\sigma^\vee = \left(\Conv \bigcup_{i=1}^r \sigma_i\right)^\vee =
	\set{u \in M_{\RR}}{\textrm{$\langle u, v \rangle \geq 0$ for all $v \in \sigma_i$ and all $i$}} = 
	\bigcap_{i=1}^r \sigma_i^\vee \, ,
	\]
	we get $\OO(X) = \OO(X_\sigma)$ which implies the first claim. 
	For the second claim, denote by $\iota_i \colon X_{\sigma_i} \to (X_{\sigma_i})_{\aff}$
	the canonical morphism of $X_{\sigma_i}$
	(which is in fact an isomorphism). Then we have for each 
	$i=1, \ldots, r$ the commutative diagram 
	\[
	\xymatrix@R=10pt{
		X \ar[r]^-{\iota} & X_{\aff} \ar@{=}[r] &  X_\sigma \\
		X_{\sigma_i} \ar@{}[u] |-{\bigcup} \ar[r]^-{\iota_i} & (X_{\sigma_i})_{\aff} \ar[u]_-{\eta}
	}
	\]
	where $\eta$ is induced by the inclusion $\kk[\sigma^{\vee}_M] \subset \kk[(\sigma_i)^\vee_M]$.
	As $\eta \circ \iota_i \colon X_{\sigma_i} \to X_{\aff} = X_{\sigma}$
	is induced by the inclusion $\sigma_i \subset \sigma$, the second claim follows.
	
	\eqref{lem.toric_qusi-affine2}: If $\sigma_i \subset \sigma$ is a face, then
	the induced morphism $X_{\sigma_i} \to X_{\sigma}$ is an open immersion
	(see \cite[\S1.3 Lemma]{Fu1993Introduction-to-to}). Now, if each $\sigma_i$
	is a face of $\sigma$, then by~\eqref{lem.toric_qusi-affine1} the canonical morphism
	$\iota \colon X \to X_\sigma$ is an open immersion, i.e. $X$ is quasi-affine (see Lemma~\ref{lem:Quasi-affine_irreducible}).
	
	On the other hand, if $X$ is quasi-affine, then $\iota \colon X \to X_\sigma$ is an open immersion
	(again by Lemma~\ref{lem:Quasi-affine_irreducible}) and by~\eqref{lem.toric_qusi-affine1}, the
	morphism $X_{\sigma_i} \to X_{\sigma}$ induced by $\sigma_i \subset \sigma$ 
	is also an open immersion. It follows now from
	\cite[\S1.3 Exercise on p.18]{Fu1993Introduction-to-to} that $\sigma_i$ is a face of $\sigma$.
	
	\eqref{lem.toric_qusi-affine3}: We claim, that $X_{\aff} \setminus X$ is the union
	of all $V(\tau)$, where $\tau \subset \sigma$ is a face with $\tau \not\in \Sigma$.

	Let $\tau \subset \sigma$ be a face such that 
	$\tau \not\in \Sigma$. In particular we have for all $i$ that $\tau \not\subset \sigma_i$. 
	Sine $X$ is quasi-affine, $\sigma_i$ is a face of $\sigma$ by~\eqref{lem.toric_qusi-affine2}.
	Hence, there is a $u_i \in \sigma^\vee_M$ with $u_i^\bot \cap \sigma = \sigma_i$ and
	\[
		X_{\sigma_i} = X_\sigma \setminus Z_{X_{\sigma}}(\chi^{u_i})
	\]
	by~\cite[\S1.3 Lemma]{Fu1993Introduction-to-to} where $Z_{X_{\sigma}}(\chi^{u_i})$ denotes
	the zero set of $\chi^{u_i} \in \OO(X_{\sigma})$ inside $X_{\sigma}$. 
	As $\tau \subset \sigma$, but $\tau \not\subset \sigma_i$, we get
	$\tau \not\subset u_i^\bot$ and thus $u_i \in \sigma_M^\vee \setminus \tau^\bot$.
	By~\cite[\S3.1]{Fu1993Introduction-to-to}, the 
	closed embedding $V(\tau) \subset X_\sigma$ corresponds to the surjective $\kk$-algebra homomorphism
	\[
		\kk[\sigma^{\vee}_M] \to \kk[\sigma^{\vee}_M \cap \tau^{\bot}] \, , 
		\quad 
		\chi^m \mapsto \left\{\begin{array}{rl}
		\chi^m \, , & \textrm{if $m \in \tau^{\bot}$} \\
		0 \, , & \textrm{if $m \in \sigma_M^\vee \setminus \tau^\bot$}
		\end{array}\right. \, .
	\]
	In particular, $\chi^{u_i}$ vanishes on $V(\tau)$ and thus $V(\tau)$
	and $X_{\sigma_i}$ are disjoint for all $i=1, \ldots, r$, i.e. 
	$V(\tau) \subset X_{\aff} \setminus X$. 
	On the other hand, if $\eta \subset \sigma$ is a face with $\eta \in \Sigma$, then there is a
	$i \in \{1, \ldots, r \}$ such that $\eta$ is a face of $\sigma_i$. Then by \cite[\S3.1 p. 53]{Fu1993Introduction-to-to},
	it follows that $V(\eta)$ and $X_{\sigma_i}$ do intersect. In particular, 
	$V(\eta) \not\subset X_{\aff} \setminus X$. 
	Since $X_{\aff} \setminus X$ is a closed $T$-invariant subset,
	it is the union of some $V(\varepsilon)$ for some faces $\varepsilon$ of $\sigma$.
	This implies then the claim.
	
	Statement~\eqref{lem.toric_qusi-affine3} now follows from the claim,
	since the minimal faces $\tau \subset \sigma$ with $\tau \not\in \Sigma$
	correspond to the maximal $V(\tau)$ in $X_{\aff} \setminus X$.
	
	\eqref{lem.toric_qusi-affine4}: Since $X$ is quasi-affine it follows from~\eqref{lem.toric_qusi-affine2}
	that each $\sigma_i$ is a face of $\sigma$. Since $\sigma$ is the convex hull of the $\sigma_i$,
	we get thus that the extremal rays of $\sigma$ are the same as the extremal rays of all the $\sigma_i$.
	Hence the extremal rays of $\sigma$ are the same as the cones of dimension one in $\Sigma$. In particular,
	each face $\tau$ of $\sigma$ with $\tau \not\in \Sigma$ has dimension at least $2$.
\end{proof}

For the description of the homogeneous $\GG_a$-actions, let us setup the following notation. Let $\sigma \subset N_{\RR}$ be a strongly convex rational polyhedral cone. 
%For each face $\tau \subset \sigma$, denote by $\tau^\bot$ the set of 
%$u \in M_{\RR}$ such that $\sprod{u}{v} = 0$ for all $v \in \tau$. Moreover, 
If $\rho \subset \sigma$ is an extremal ray
and $\tau \subset \sigma$ a face, we denote
\[
	\tau_\rho := \Conv(\textrm{extremal rays in $\tau$ except $\rho$}) \subset N_{\RR} \, .
\]
In particular, if $\rho$ is not an extremal ray of $\tau$, then $\tau_\rho = \tau$. Let us mention the following 
easy observations of this construction for future use:

\begin{lemma}
	\label{lem.tau_rho_is_a_face}
	Let $\sigma \subset N_{\RR}$ be a strongly convex rational polyhedral cone, $\tau \subset \sigma$
	a face and $\rho \subset \sigma$ an extremal ray. Then
	\begin{enumerate}[leftmargin=*]
		\item \label{lem.tau_rho_is_a_face1} $\tau_\rho$ is a face of $\sigma_\rho$;
		\item \label{lem.tau_rho_is_a_face2} If $\dim \tau_\rho < \dim \tau $, then $\tau_\rho$ is a face of $\tau$.
	\end{enumerate}
\end{lemma}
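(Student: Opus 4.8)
The plan is to prove both parts by producing explicit supporting hyperplanes, relying on the standard fact that the extremal rays of a face $\tau$ of $\sigma$ are exactly the extremal rays of $\sigma$ that lie in $\tau$. Fix a generator $w$ of $\rho$. Since $\tau$ is a face of $\sigma$, I may choose a linear functional $u$ on $N_{\RR}$ with $\sprod{u}{v} \geq 0$ for all $v \in \sigma$ and $\tau = \sigma \cap u^{\bot}$; for a generator $u_i$ of an extremal ray of $\sigma$ this gives $\sprod{u}{u_i} = 0$ if $u_i \in \tau$ and $\sprod{u}{u_i} > 0$ otherwise. Because the generators of $\tau_\rho$ are extremal rays of $\sigma$ distinct from $\rho$, I note at the outset that $\tau_\rho \subseteq \sigma_\rho$ in all cases.

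For~\eqref{lem.tau_rho_is_a_face1} I would check that $u^{\bot}$ cuts $\tau_\rho$ out of $\sigma_\rho$. As $\sigma_\rho \subseteq \sigma$, the functional $u$ is still nonnegative on $\sigma_\rho$, so $u^{\bot}$ supports $\sigma_\rho$ and $\sigma_\rho \cap u^{\bot} = \sigma_\rho \cap \tau$. The inclusion $\tau_\rho \subseteq \sigma_\rho \cap \tau$ is clear. Conversely, writing $x \in \sigma_\rho \cap \tau$ as a nonnegative combination $x = \sum_i b_i u_i$ of generators $u_i$ of the extremal rays of $\sigma$ other than $\rho$, the relation $\sprod{u}{x} = 0$ together with $\sprod{u}{u_i} \geq 0$ forces $b_i = 0$ whenever $u_i \notin \tau$; hence $x$ is a nonnegative combination of the $u_i \in \tau$ with $u_i \neq w$, that is $x \in \tau_\rho$. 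Thus $\tau_\rho = \sigma_\rho \cap u^{\bot}$ is a face of $\sigma_\rho$ (the degenerate case $\tau = \sigma$, where $\tau_\rho = \sigma_\rho$, being immediate).

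For~\eqref{lem.tau_rho_is_a_face2} I would first argue that the hypothesis $\dim \tau_\rho < \dim \tau$ forces $\rho$ to be an extremal ray of $\tau$ (otherwise $\tau_\rho = \tau$) and moreover $w \notin \Span_{\RR}(\tau_\rho)$, so that $\dim \tau = \dim \tau_\rho + 1$ and $\tau = \tau_\rho + \rho$ as cones. I would then take a linear functional $u'$ on $\Span_{\RR}(\tau)$ that vanishes on $\Span_{\RR}(\tau_\rho)$ and satisfies $\sprod{u'}{w} > 0$, and extend it to $N_{\RR}$. For $x = y + s w \in \tau$ with $y \in \tau_\rho$ and $s \geq 0$ one gets $\sprod{u'}{x} = s\sprod{u'}{w} \geq 0$, vanishing precisely when $s = 0$, i.e. when $x \in \tau_\rho$. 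Hence $(u')^{\bot}$ supports $\tau$ with $\tau \cap (u')^{\bot} = \tau_\rho$, so $\tau_\rho$ is a face of $\tau$.

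These computations are elementary, and I do not anticipate a serious obstacle. The one point that needs genuine care is the bookkeeping identification that \emph{extremal rays of $\tau$ other than $\rho$} coincide with \emph{extremal rays of $\sigma$ lying in $\tau$ other than $\rho$}; it is this that makes the functional evaluations in both parts collapse exactly onto $\tau_\rho$. Beyond that, I would only need to confirm that the constructed hyperplanes are genuine (i.e. the functionals are nonzero), which holds since $u \neq 0$ for a proper face in~\eqref{lem.tau_rho_is_a_face1} and $\sprod{u'}{w} > 0$ in~\eqref{lem.tau_rho_is_a_face2}.
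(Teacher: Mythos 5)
Your proof is correct and takes essentially the same approach as the paper: in~\eqref{lem.tau_rho_is_a_face1} you use the same functional $u \in \sigma^\vee$ with $\tau = \sigma \cap u^\bot$ and show $\tau_\rho = \sigma_\rho \cap u^\bot$ (merely spelling out the coefficient argument on extremal generators that the paper asserts), and in~\eqref{lem.tau_rho_is_a_face2} you build the same supporting functional, vanishing on $\Span_{\RR}(\tau_\rho)$ and positive on $\rho$, exploiting the decomposition $\Span_{\RR}(\tau) = \RR\rho \oplus \Span_{\RR}(\tau_\rho)$.
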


\begin{proof}
	\eqref{lem.tau_rho_is_a_face1}:
	By definition, there is $u \in \sigma^\vee$ with $\tau = \sigma \cap u^\bot$. Hence 
	$\tau_\rho \subset \sigma_\rho \cap u^\bot \subset \tau$. Since $u \in (\sigma_\rho)^\vee$,
	$\sigma_\rho \cap u^\bot$ is a face of $\sigma_\rho$.
	If $\rho \not\subset \tau$, then $\tau_\rho = \tau$
	and thus $\tau_\rho = \sigma_\rho \cap u^\bot$ is a face of $\sigma_\rho$. If $\rho \subset \tau$,
	then $\sigma_\rho \cap u^\bot$ is the convex cone generated by the extremal rays in $\tau$, except $\rho$, i.e.
	$\tau_\rho = \sigma_\rho \cap u^\bot$. Thus $\tau_\rho$ is a face of $\sigma_\rho$.
	
	\eqref{lem.tau_rho_is_a_face2}: As 
	$\dim \tau_\rho < \dim \tau$, we get $\rho \subset \tau$ and
	\[
		\label{eq.Span_tau}
		\tag{$\boxbox$}
		\Span_{\RR}(\tau) = \RR \rho \oplus \Span_{\RR}(\tau_\rho) \, .
	\]
	Hence, there is $u \in M$ such that $\Span_{\RR}(\tau_\rho) = u^\bot \cap \Span_{\RR}(\tau)$.
	After possibly replacing $u$ by $-u$, we may assume $\sprod{u}{v_\rho} \geq 0$ where
	$v_\rho \in \rho$ denotes the unique primitive generator. 
	As $\tau_\rho \subset u^\bot$, we get now $u \in \tau^\vee$. Moreover, 
	\[
		u^\bot \cap \tau = \Span_{\RR}(\tau_\rho) \cap \tau = \tau_\rho
	\]
	where the second equality follows from~\eqref{eq.Span_tau} as one may write each element in $\tau$
	as $\lambda v_{\rho} + \mu w$ for $w \in \tau_\rho$ and $\lambda, \mu \geq 0$.
	Thus $\tau_\rho$ is a face of $\tau$.
\end{proof}

For each extremal ray $\rho$ in a strongly convex rational polyhedral cone $\sigma$, let 
\[
	S_{\rho} := \set{m \in (\sigma_\rho)_M^\vee}{\sprod{m}{v_\rho} = -1}
\]
where $v_\rho \in \rho$ denotes the unique primitive generator. 

\begin{remark}[{see also \cite[Remark~2.5]{Li2010Affine-Bbb-T-varie}}]
	\label{rem.S_rho_non-empty}
	The set $S_\rho$ is non-empty. Indeed, apply Proposition~\ref{prop.Weights_facet}\eqref{prop.Weights_facet1}
	to the convex polyhedral cone $C = \sigma_\rho^\vee$ and the hyperplanes $H = \rho^\bot$, 
	$H' = \set{u \in M_\RR}{\sprod{u}{v_\rho} = -1}$ 
	inside $V = M_\RR$.  
\end{remark}

Now, we come to the promised description
of the homogeneous $\GG_a$-actions on toric varieties due to \name{Liendo}:

%In fact by~\cite[Lemma~2.4]{Li2010Affine-Bbb-T-varie} we 
%have the following alternative description
%\[
%	S_{\rho} = \set{e \in M}{\textrm{$e \not\in \sigma_M^\vee$ and 
%	$e + m \in \sigma_M^\vee$ for all $m \in \sigma_M^\vee \setminus \rho^\bot$}} \, .
%\]

\begin{proposition}[{\cite[Lemma~2.6, Theorem~2.7]{Li2010Affine-Bbb-T-varie}}]
	\label{prop.G_a-actions_on_affine_toric_var}
	Let $\sigma \subset N_\RR$ be a strongly convex rational polyhedral cone. 
	Then for any extremal ray $\rho$ in $\sigma$ and any 
	$e \in S_\rho$, the linear map
	\[
		\partial_{\rho, e} \colon \kk[\sigma^\vee_M] \to \kk[\sigma^\vee_M]
		\, , \quad
		\chi^m \mapsto \langle m, v_\rho \rangle \chi^{e+m}
	\]
	is a homogeneous locally nilpotent derivation of degree $e$,
	and every homogeneous locally nilpotent derivation of $\kk[\sigma^\vee_M]$ 
	is a constant multiple of some $\partial_{\rho, e}$.
\end{proposition}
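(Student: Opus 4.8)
The plan is to treat the two assertions separately: first that each $\partial_{\rho,e}$ is a homogeneous locally nilpotent derivation of degree $e$, and then that, up to a scalar, there are no others. For the first part I would check the defining properties directly on the basis $\{\chi^m\}$. Well-definedness means $e+m\in\sigma_M^\vee$ whenever $\langle m,v_\rho\rangle\neq 0$: here $\langle m,v_\rho\rangle$ is a \emph{positive} integer (as $m\in\sigma^\vee$, $v_\rho\in\sigma$), so $\langle e+m,v_\rho\rangle=\langle m,v_\rho\rangle-1\geq 0$, while $e\in(\sigma_\rho)_M^\vee$ gives $\langle e+m,v_{\rho'}\rangle\geq 0$ for every other extremal ray $\rho'$; this is exactly where $e\in S_\rho$ is used. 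The Leibniz rule follows from linearity of $\langle-,v_\rho\rangle$, since both sides evaluate $\chi^m\chi^{m'}$ to $(\langle m,v_\rho\rangle+\langle m',v_\rho\rangle)\chi^{e+m+m'}$, and homogeneity of degree $e$ is immediate. Local nilpotency follows from the explicit formula
\[
	\partial_{\rho,e}^{\,k}(\chi^m)=\Bigl(\prod_{j=0}^{k-1}(\langle m,v_\rho\rangle-j)\Bigr)\chi^{m+ke},
\]
whose coefficient vanishes once $k>\langle m,v_\rho\rangle\in\ZZ_{\geq 0}$.

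For the converse, let $D\neq 0$ be a homogeneous derivation of degree $e\in M$, so $D(\chi^m)=c_m\chi^{m+e}$ with $c_m\in\kk$ and $c_m=0$ whenever $m+e\notin\sigma_M^\vee$ (since $D$ maps $\kk[\sigma_M^\vee]$ into itself). The Leibniz rule forces $c_{m+m'}=c_m+c_{m'}$, and as $\sigma$ is strongly convex the cone $\sigma^\vee$ is full-dimensional, so $\sigma_M^\vee$ generates $M$ as a group; hence $m\mapsto c_m$ extends to a homomorphism $M\to\kk$, i.e.\ $c_m=\langle m,s\rangle$ for a unique $s\in N_\kk:=N\otimes_\ZZ\kk$. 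A short computation shows $D$ is locally nilpotent if and only if for every $m\in\sigma_M^\vee$ with $\langle m,s\rangle\neq 0$ and $m+e\in\sigma^\vee$ there is some $j\geq 1$ with $\langle m+je,s\rangle=0$. Applied to $m=e$ this already rules out $e\in\sigma^\vee$: one would get $-1=j\in\ZZ_{\geq 1}$ (the degenerate case $\langle e,s\rangle=0$ forcing $D=0$), which is absurd.

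The crux, which I expect to be the main obstacle, is to extract the combinatorics of $S_\rho$ from the two conditions above. Set $R^-:=\{\rho'\text{ extremal ray}:\langle e,v_{\rho'}\rangle<0\}$, non-empty since $e\notin\sigma^\vee$. For $\rho'\in R^-$ and any lattice point $m$ of the facet $\sigma^\vee\cap(\rho')^\bot$ one has $\langle m+e,v_{\rho'}\rangle<0$, so $m+e\notin\sigma^\vee$ and hence $\langle m,s\rangle=0$; since this facet spans the hyperplane $(\rho')^\bot$ (by polar duality the extremal rays of $\sigma=(\sigma^\vee)^{\vee}$ are dual to the facets of the full-dimensional cone $\sigma^\vee$), it follows that $s\in\kk\, v_{\rho'}$. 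As $s\neq 0$ and distinct primitive generators are linearly independent, $R^-$ is a singleton $\{\rho\}$, and $s=\lambda v_\rho$ with $\lambda\in\kk^\ast$; in particular $e\in(\sigma_\rho)_M^\vee$.

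It remains to prove $\langle e,v_\rho\rangle=-1$. Writing $a:=\langle e,v_\rho\rangle\in\ZZ_{<0}$, I would use primitivity of $v_\rho$ together with the fact that $\sigma^\vee\cap\rho^\bot$ is a full-dimensional rational facet of $\rho^\bot$ to produce a lattice point $m\in\sigma_M^\vee$ with $\langle m,v_\rho\rangle=1$ (take $m=m_0+Nz$ with $\langle m_0,v_\rho\rangle=1$ and $z$ in the relative interior of that facet, $N\gg 0$). If $a\leq -2$ then $\langle m+e,v_\rho\rangle=1+a<0$, so $m+e\notin\sigma^\vee$ and thus $\langle m,s\rangle=\lambda\langle m,v_\rho\rangle=\lambda\neq 0$ must vanish, a contradiction. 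Hence $a=-1$, so $e\in S_\rho$, and $D(\chi^m)=\lambda\langle m,v_\rho\rangle\chi^{m+e}=\lambda\,\partial_{\rho,e}(\chi^m)$, giving $D=\lambda\,\partial_{\rho,e}$. The delicate points are the facet/polar-duality bookkeeping, which must be carried out \emph{without} assuming $\sigma$ is full-dimensional, and the explicit construction of the lattice point $m$ with $\langle m,v_\rho\rangle=1$ inside $\sigma_M^\vee$.
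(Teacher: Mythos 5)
The paper does not prove this proposition at all: it is imported verbatim from Liendo \cite[Lemma~2.6, Theorem~2.7]{Li2010Affine-Bbb-T-varie}, so there is no internal proof to compare against, and your argument stands or falls on its own. It stands: the proof is correct, and it takes a route genuinely different from the classical Demazure--Liendo argument behind the citation. There, one studies the \emph{kernel} of a nonzero homogeneous LND --- a graded, factorially closed subalgebra of transcendence degree $\rank M - 1$, hence the semigroup algebra $\kk[\sigma^\vee_M \cap \rho^\bot]$ of a facet of $\sigma^\vee$ --- and the ray $\rho$ emerges from that facet. You bypass kernels entirely: Leibniz gives additivity of the coefficients, the standard fact that $\sigma^\vee_M$ generates $M$ (strong convexity of $\sigma$ makes $\sigma^\vee$ full-dimensional) yields $c_m = \langle m, s\rangle$ with $s \in N \otimes_{\ZZ} \kk$, local nilpotency is invoked exactly once (to exclude $e \in \sigma^\vee$), and the ring-preservation constraint $\langle m, s\rangle = 0$ whenever $m + e \notin \sigma^\vee_M$, tested on the facets dual to the rays where $e$ is negative, pins down $s \in \kk v_\rho$ and $\langle e, v_\rho\rangle = -1$. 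This is more elementary (no factorial closedness, no transcendence degree of kernels) at the cost of face-duality bookkeeping, which you handle correctly; the dimension formula $\dim \tau + \dim(\sigma^\vee \cap \tau^\bot) = \rank M$ does hold for strongly convex $\sigma$ of any dimension, since only full-dimensionality of $\sigma^\vee$ matters. Two compressed steps deserve an explicit line each, though neither is a gap: (i) when excluding $e \in \sigma^\vee$, the case $\langle e, s\rangle = 0$ is not settled by $m = e$; instead take any $m \in \sigma^\vee_M$ with $\langle m, s\rangle \neq 0$ (it exists since $s \neq 0$ and $\sigma^\vee_M$ generates $M$) and note $\langle m + je, s\rangle = \langle m, s\rangle \neq 0$ for all $j$, contradicting local nilpotency; (ii) one should record why a lattice point $z$ exists in the relative interior of the facet $\sigma^\vee \cap \rho^\bot$ (the facet is rational of dimension $\rank M - 1$) and why $\langle z, v_{\rho'}\rangle > 0$ for every extremal ray $\rho' \neq \rho$ (a relative interior point lies on no proper face of the facet), after which $N \gg 0$ works because there are finitely many rays. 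Also note that additivity $c_{m+m'} = c_m + c_{m'}$ holds even when $m + m' + e \notin \sigma^\vee_M$, since then all three coefficients vanish by the ring-preservation constraint --- worth a half-sentence so the extension to $s$ is airtight.
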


\begin{remark}
	The weight of the homogeneous $\GG_a$-action induced by $\partial_{\rho, e}$ is $e \in M$.
	The kernel of the locally nilpotent derivation $\partial_{\rho, e}$ is $\kk[\sigma_M^\vee \cap \rho^\bot]$.
\end{remark}

The following lemma is the key for the description of the homogeneous $\GG_a$-actions on a 
quasi-affine toric variety.

\begin{lemma}
	\label{Lem:Orbit_closure_invariant}
	Let $\sigma \subset N_{\RR}$ be a strongly convex rational polyhedral cone, $\tau \subset \sigma$ a face, 
	$\rho \in \sigma$ an extremal ray and $e \in S_\rho$. Then the
	$\GG_a$-action on $X_\sigma$ corresponding to the locally nilpotent derivation $\partial_{\rho, e}$ 
	leaves $V(\tau)$ invariant if and only if 
	\[
		\rho \not\subset \tau \quad \textrm{or} \quad e \not\in \tau_{\rho}^\bot \, .
	\]
\end{lemma}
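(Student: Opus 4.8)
The plan is to translate the invariance condition into an algebraic statement about the locally nilpotent derivation $\partial_{\rho,e}$ and the ideal defining $V(\tau)$. Recall from \cite[\S3.1]{Fu1993Introduction-to-to} that the closed $T$-invariant subset $V(\tau)$ corresponds to the $T$-stable ideal
\[
	I(\tau) = \bigoplus_{m \in \sigma_M^\vee \setminus \tau^\bot} \kk \chi^m \subset \kk[\sigma_M^\vee] \, ,
\]
so that $\kk[V(\tau)] = \kk[\sigma_M^\vee]/I(\tau) = \kk[\sigma_M^\vee \cap \tau^\bot]$. The $\GG_a$-action leaves $V(\tau)$ invariant if and only if $\partial_{\rho,e}$ descends to a derivation of the quotient, which is equivalent to $\partial_{\rho,e}(I(\tau)) \subseteq I(\tau)$. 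So the first step is to make this ideal-stability condition completely explicit on basis elements.

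Next I would unwind the definition $\partial_{\rho,e}(\chi^m) = \sprod{m}{v_\rho} \chi^{e+m}$ and ask precisely when the invariance can fail. A failure requires a basis element $\chi^m$ with $m \in I(\tau)$, i.e.\ $m \in \sigma_M^\vee \setminus \tau^\bot$, that gets mapped \emph{out} of $I(\tau)$, which means $\sprod{m}{v_\rho} \neq 0$ and $e+m \in \tau^\bot$ (noting that $e + m$ always remains in $\sigma_M^\vee$ since $e \in S_\rho \subset (\sigma_\rho)_M^\vee$ and a short check using $\sprod{e}{v_\rho} = -1$ keeps us inside the dual cone when $\sprod{m}{v_\rho} \geq 1$). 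Conversely, invariance holds exactly when no such $m$ exists. The core of the argument is therefore to show that the existence of such an offending exponent $m$ is equivalent to the negation of the stated condition, namely to $\rho \subset \tau$ \emph{and} $e \in \tau_\rho^\bot$.

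For this equivalence I would argue both directions. If $\rho \not\subset \tau$, I expect that $\tau^\bot$ already contains $v_\rho$ in its ``positive'' sense so that the condition $e+m \in \tau^\bot$ forces $m \in \tau^\bot$ as well, contradicting $m \notin \tau^\bot$; more carefully, one uses that $\tau \subseteq \sigma_\rho$ in this case so that $\tau^\bot \supseteq \sigma_\rho^\bot \ni$ relevant vectors, and $e \in (\sigma_\rho)_M^\vee$ interacts well with $\tau^\bot$. If instead $e \notin \tau_\rho^\bot$, I would use the decomposition coming from Lemma~\ref{lem.tau_rho_is_a_face}, that $\tau_\rho$ is a face of $\tau$ (when $\rho \subset \tau$) with $\Span_\RR(\tau) = \RR v_\rho \oplus \Span_\RR(\tau_\rho)$, to see that membership $e+m \in \tau^\bot$ together with $m \notin \tau^\bot$ cannot be reconciled. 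For the reverse direction, assuming $\rho \subset \tau$ and $e \in \tau_\rho^\bot$, I would exhibit an explicit offending $m$: using Remark~\ref{rem.S_rho_non-empty} and Proposition~\ref{prop.Weights_facet} one produces a lattice point $m \in \sigma_M^\vee$ with $\sprod{m}{v_\rho} \geq 1$ and $m \in \tau_\rho^\bot$ but $m \notin \tau^\bot$, and then check $e+m \in \tau^\bot$ using $\sprod{e+m}{v_\rho} = -1 + \sprod{m}{v_\rho}$ tuned to vanish and $e, m \in \tau_\rho^\bot$; the splitting of $\Span_\RR(\tau)$ guarantees that vanishing on $v_\rho$ and on $\tau_\rho$ forces vanishing on all of $\tau$.

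The main obstacle I anticipate is the construction of the explicit offending exponent $m$ in the reverse direction, and verifying simultaneously the three constraints $m \in \sigma_M^\vee$, $\sprod{m}{v_\rho} = 1$, and $m \in \tau_\rho^\bot \setminus \tau^\bot$. This is precisely where the combinatorial geometry of the faces $\tau$, $\tau_\rho$ and the ray $\rho$ must be handled with care, and where the finiteness-of-lattice-points results (Proposition~\ref{prop.Weights_facet}, in the spirit of Remark~\ref{rem.S_rho_non-empty}) do the real work; the ``easy'' direction (invariance fails $\Rightarrow$ condition negated) is a more routine unwinding of the dual-cone inclusions and the face relation $\tau_\rho \subset \tau$.
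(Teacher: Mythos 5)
Your reduction of invariance to stability of the ideal $\bigoplus_{m \in \sigma_M^\vee \setminus \tau^\bot} \kk\chi^m$ under $\partial_{\rho,e}$, and the resulting exponent condition (no $m \in \sigma_M^\vee\setminus(\tau^\bot\cup\rho^\bot)$ with $e+m\in\tau^\bot$), is exactly the paper's starting point, and your treatment of the case $\rho\not\subset\tau$ (via $\tau\subset\sigma_\rho$ and $e\in(\sigma_\rho)^\vee$) is essentially the paper's. However, there are two genuine gaps. First, in the subcase $\rho\subset\tau$, $e\notin\tau_\rho^\bot$ you invoke a splitting $\Span_\RR(\tau)=\RR v_\rho\oplus\Span_\RR(\tau_\rho)$ ``coming from Lemma~\ref{lem.tau_rho_is_a_face} when $\rho\subset\tau$''. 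That is not what the lemma says: both the face property and the splitting require $\dim\tau_\rho<\dim\tau$, which can fail here. For instance, if $\tau$ is the cone over a square (rays through $(1,0,1)$, $(0,1,1)$, $(-1,0,1)$, $(0,-1,1)$) and $\rho$ is one of its rays, then $\tau_\rho$ is still three-dimensional, is not a face of $\tau$, and $v_\rho\in\Span_\RR(\tau_\rho)$. Moreover, even granting the splitting, you never say how it excludes an offending $m$. The paper's argument for this subcase is different and unconditional: since $e\notin\tau_\rho^\bot$ there is an extremal ray $\rho'\subset\tau$, $\rho'\neq\rho$, with $\sprod{e}{v_{\rho'}}>0$ (positivity because $e\in(\sigma_\rho)^\vee$ and $\rho'\subset\sigma_\rho$), whence $\sprod{e+m}{v_{\rho'}}>0$ for every $m\in\sigma_M^\vee$, so $e+m\notin\tau^\bot$. (Alternatively: if $e+m\in\tau^\bot$, then for $w\in\tau_\rho$ the nonnegative numbers $\sprod{e}{w}$ and $\sprod{m}{w}$ sum to zero, which already forces $e\in\tau_\rho^\bot$.)

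Second, and more seriously, in the direction where invariance must fail ($\rho\subset\tau$ and $e\in\tau_\rho^\bot$) you defer the construction of the offending exponent to Proposition~\ref{prop.Weights_facet} and flag it yourself as the ``main obstacle'' --- but that obstacle is the actual content of the proof, and your plan does not address the key hypothesis of any such lattice-point result: one must first show that the cone $C=(\sigma_\rho)^\vee\cap\tau_\rho^\bot$ meets the affine hyperplane $\set{u}{\sprod{u}{v_\rho}=1}$ at all. The paper obtains this as follows: since $e\in\tau_\rho^\bot$ but $\sprod{e}{v_\rho}=-1$, one gets $\tau^\bot\subsetneq\tau_\rho^\bot$, hence $\dim\tau_\rho<\dim\tau$; then Lemma~\ref{lem.tau_rho_is_a_face}\eqref{lem.tau_rho_is_a_face2} makes $\tau_\rho$ a proper face of $\tau$, and the order-reversing bijection between faces of $\sigma$ and of $\sigma^\vee$ produces some $u\in(\sigma^\vee\cap\tau_\rho^\bot)\setminus\tau^\bot$, which after scaling lies on the $+1$ level. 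Only then is a lattice point extracted, and in fact the paper uses Proposition~\ref{prop.help_lemma_weights} rather than Proposition~\ref{prop.Weights_facet}: the point $e$ itself is a lattice point of $C$ on the $-1$ level, and that proposition transfers it to a lattice point $m_1$ on the $+1$ level; the verification $e+m_1\in\tau^\bot$ then goes exactly as you describe (and there the splitting is legitimate, because the dimension drop has been established). Without the dimension-drop and face-duality step, your construction does not get off the ground, so the hard direction of the equivalence remains unproven in your proposal.
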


\begin{proof}
	As in the proof of~Lemma~\ref{lem.toric_qusi-affine}~\eqref{lem.toric_qusi-affine3},  
	the embedding
	$\iota \colon V(\tau) \subset X_\sigma$ corresponds to the surjective $\kk$-algebra homomorphism
	\[
		\iota^\ast \colon \kk[\sigma^{\vee}_M] \to \kk[\sigma^{\vee}_M \cap \tau^{\bot}] \, , 
		\quad 
		\chi^m \mapsto \left\{\begin{array}{rl}
		\chi^m \, , & \textrm{if $m \in \tau^{\bot}$} \\
		0 \, , & \textrm{if $m \in \sigma_M^\vee \setminus \tau^\bot$}
		\end{array}\right. \, .
	\] 
	Thus the $\GG_a$-action on $X_\sigma$ corresponding to $\partial_{\rho, e}$ 
	preserves $V(\tau)$ if and only
	if
	\[
		\partial_{\rho, e}(\ker \iota^\ast) \subset \ker \iota^\ast  \, ,
	\]
	see \cite[Sect. 1.5]{Fr2017Algebraic-theory-o}.
	Since $\sprod{m}{v_\rho} = 0$ for all $m \in \rho^\bot$  
	and since $e + m \in \sigma^\vee_M$ for all $m \in \sigma^\vee_M \setminus \rho^\bot$,
	this last condition is equivalent to 
	\[
		\label{Eq:Condition_Invariant}
		\tag{$\odot$}
		m \in \sigma_M^\vee \setminus (\tau^\bot \cup \rho^\bot) \quad \implies \quad
		e+m \not\in \tau^\bot \, .
	\]
	We distinguish now two cases:
	\begin{enumerate}[leftmargin=*]
		\item Assume $\rho \not\subset \tau$. Then $\tau \subset \sigma_\rho$. In particular, we
		get $\sprod{e}{v} \geq 0$ for all $v \in \tau$. 
		Let $m \in \sigma_M^\vee \setminus \tau^\bot$. Then
		we get $\sprod{m}{v} > 0$ for some $v \in \tau$ and hence
		\[
			\sprod{e+m}{v} > 0 \quad \textrm{for some $v \in \tau$}
		\]
		which in turn implies $e + m \not\in \tau^\bot$. Thus
		\eqref{Eq:Condition_Invariant} is satisfied.
		\item Assume $\rho \subset \tau$. In  particular, we have $\tau^\bot \subset \rho^\bot$.
		We distinguish two cases:
		\begin{itemize}[leftmargin=*]
			\item $e \not\in \tau_{\rho}^\bot$: Then there exists an
			extremal ray $\rho' \subset \tau$ with $\rho' \neq \rho$ such that $e \not\in (\rho')^\bot$ and 
			the unique primitive generator
			$v_{\rho'} \in \rho'$ satisfies
			\[
			\sprod{e+m}{v_{\rho'}} = \underbrace{\sprod{e}{v_{\rho'}}}_{>0} + \underbrace{\sprod{m}{v_{\rho'}}}_{\geq 0}
			> 0  \quad \textrm{for all $m \in \sigma_M^\vee$} \, .
			\]
			In particular $e+m \not\in \tau^\bot$
			for all $m \in \sigma_M^\vee$ and thus~\eqref{Eq:Condition_Invariant} is satisfied.
			
			\item $e \in \tau_\rho^\bot$:
			Now, we want to apply Proposition~\ref{prop.help_lemma_weights}.
			For this we fix the lattice $\Lambda = M \cap \tau_{\rho}^\bot$ inside 
			$V = \tau_{\rho}^\bot$. 
			Since $\tau_\rho$ is a face of $\sigma_\rho$ (see Lemma~\ref{lem.tau_rho_is_a_face}\eqref{lem.tau_rho_is_a_face1}), 
			$C =(\sigma_\rho)^\vee \cap \tau_{\rho}^\bot$ is a rational convex polyhedral cone
			in $M_\RR$ and thus also in $V$. 
			Moreover, we set $H_0 = \rho^\bot \cap V = \tau^\bot$
			and $H_{\pm1} = \set{u \in V}{\sprod{u}{v_{\rho}} = \pm 1}$.
			Since $e \in (S_\rho \cap \tau_\rho^\bot) \setminus H_0$, $H_0$ is a hyperplane in $V$ and
			$C \cap H_{-1} \cap \Lambda = S_{\rho} \cap \tau_{\rho}^\bot \neq \varnothing$.
			Since $H_0 \subsetneq V$ we get thus $\dim \tau_\rho < \dim \tau$.
			Now, by Lemma~\ref{lem.tau_rho_is_a_face}\eqref{lem.tau_rho_is_a_face2}, 
			$\tau_\rho$ is a face of $\tau$ and therefore
			$\sigma^\vee \cap \tau_\rho^\bot \supsetneq \sigma^\vee \cap \tau^\bot$
			by the order-reversing bijection between faces of $\sigma$ and 
			$\sigma^\vee$. Hence, 
			there is $u \in (\sigma^\vee \cap \tau_\rho^\bot) \setminus \tau^\bot$ and in particular 
			$u \in C \setminus H_0$. As $\sprod{u}{v_{\rho}} > 0$, after scaling $u$ 
			with a real number $>0$, we may assume
			$u \in C \cap H_1$ and hence $C \cap H_1 \neq \varnothing$.
			Now, as $C \cap H_0 = \sigma^\vee \cap \tau^{\bot}$ is rational
			in $M_{\RR}$ and thus also in $V$, 
			we may apply Proposition~\ref{prop.help_lemma_weights} and get some 
			\[
				m_1 \in (\sigma_\rho)^\vee \cap \tau_{\rho}^\bot \cap \set{m \in M}{\sprod{m}{v_{\rho}}=1} \, .
			\]
			Hence, $m_1 \in \sigma_M^\vee \setminus \rho^\bot$. 
			Since $e, m_1 \in \tau_{\rho}^\bot$, we get
			$e + m_1 \in \tau_{\rho}^\bot$. Since
			\[
				\sprod{e + m_1}{v_{\rho}} = \sprod{e}{v_{\rho}} + \sprod{m_1}{v_{\rho}} = -1 + 1 = 0 \, ,
			\]
			we get thus $e + m_1 \in \tau^\bot$. This implies 
			that~\eqref{Eq:Condition_Invariant} is not satisfied.
		\end{itemize}
	\end{enumerate}
\end{proof}

We can use this lemma to give a full description of all homogeneous $\GG_a$-actions on a quasi-affine toric variety $X = X_\Sigma$ where $\Sigma$ is a fan in $N_{\RR}$. 
Recall that $X_{\aff} = X_\sigma$, where $\sigma$ is the
cone in $N_{\RR}$ generated by all maximal cones in $\Sigma$, see 
Lemma~\ref{lem.toric_qusi-affine}. Moreover, $X_{\aff} \setminus X$ is the union of the
sets of the form $V(\tau)$ where $\tau \subset \sigma$ runs through the minimal faces with 
the property that $\tau \not\in \Sigma$ (again by Lemma~\ref{lem.toric_qusi-affine}).
In the next corollaries, we use this notation freely.

\begin{corollary}
	\label{Cor:Classification_LND_Quasi-affine}
	Let $X = X_{\Sigma}$ be a quasi-affine toric variety, let $X_{\aff} = X_\sigma$ and let $\tau_1, \ldots, \tau_s \subset \sigma$
	be the minimal faces of $\sigma$ with $\tau_i \not\in \Sigma$ for all $i=1, \ldots, s$. Then, the homogeneous
	$\GG_a$-actions on $X$ are the restricted homogeneous $\GG_a$-actions on $X_{\aff}$ that 
	are induced by the constant multiples of $\partial_{\rho, e} \in \LND_{\kk}(\OO(X))$
	such that
	%	of the following form
	%	\[
	%		\textrm{$\rho$ is an extremal ray of $\sigma$} \, , \quad e \in S_{\rho}
	%	\] 
	%	and 
	for all $i = 1, \ldots, s$ we have
	\[
		\label{Eq:Classification_LND_Quasi-affine}
		\tag{$\circledast$}
		\rho \not\subset \tau_i \quad \textrm{or} \quad e \not\in (\tau_i)_{\rho}^\bot \, .
	\]
\end{corollary}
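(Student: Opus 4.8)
The plan is to pass to the affine scheme $X_{\aff} = X_\sigma$ and to transport the question into the language of locally nilpotent derivations, where Proposition~\ref{prop.G_a-actions_on_affine_toric_var} and Lemma~\ref{Lem:Orbit_closure_invariant} do essentially all of the work. First I would observe that, by Lemma~\ref{lem:Quasi-affine_group_action}, every homogeneous $\GG_a$-action on $X$ extends uniquely to a $\GG_a$-action on $X_{\aff}$; since both the $\GG_a$-action and the $T$-action extend uniquely and the homogeneity relation $h \circ \varepsilon(t) \circ h^{-1} = \varepsilon(\lambda(h)\cdot t)$ holds on the dense open subset $X$, it holds on all of $X_{\aff}$ by density, so the extension is again homogeneous. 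Conversely, a homogeneous $\GG_a$-action on $X_{\aff}$ arises by restriction from $X$ exactly when it leaves the open subvariety $X \subset X_{\aff}$ invariant. This yields a bijection between the homogeneous $\GG_a$-actions on $X$ and the homogeneous $\GG_a$-actions on $X_{\aff}$ that preserve $X$.

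Next, since $\OO(X) = \OO(X_{\aff}) = \kk[\sigma_M^\vee]$, Proposition~\ref{prop.G_a-actions_on_affine_toric_var} tells me that every homogeneous $\GG_a$-action on $X_{\aff}$ is induced by a constant multiple of some $\partial_{\rho, e}$ with $\rho$ an extremal ray of $\sigma$ and $e \in S_\rho$. A nonzero scalar multiple of a locally nilpotent derivation preserves an ideal if and only if the derivation itself does, so scaling does not affect which closed subsets are invariant; the question of which actions descend to $X$ is therefore the question of which $\partial_{\rho, e}$ leave $X$ invariant. Now an automorphism preserves the open set $X$ if and only if it preserves its closed complement, and by Lemma~\ref{lem.toric_qusi-affine}\eqref{lem.toric_qusi-affine3} we have $X_{\aff}\setminus X = V(\tau_1)\cup\cdots\cup V(\tau_s)$, these being precisely the irreducible components. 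Because $\GG_a$ is connected, the action permutes these finitely many components trivially, so it preserves $X_{\aff}\setminus X$ if and only if it preserves each $V(\tau_i)$ separately.

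Finally I would apply Lemma~\ref{Lem:Orbit_closure_invariant} to each face $\tau_i$: the action of $\partial_{\rho, e}$ leaves $V(\tau_i)$ invariant if and only if $\rho \not\subset \tau_i$ or $e \notin (\tau_i)_\rho^\bot$. Conjoining these conditions over $i = 1, \ldots, s$ gives exactly~\eqref{Eq:Classification_LND_Quasi-affine} and completes the characterization. Since the genuinely nontrivial input, Lemma~\ref{Lem:Orbit_closure_invariant}, is already available, the only real care needed in the assembly lies in the two reductions: checking that the extension of a homogeneous $\GG_a$-action to $X_{\aff}$ remains homogeneous, and using the connectedness of $\GG_a$ to replace invariance of the reducible set $X_{\aff}\setminus X$ by invariance of its individual components $V(\tau_i)$. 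Neither step is deep, but both are essential in order to reduce the invariance test precisely to the minimal faces $\tau_i$ that appear in the statement.
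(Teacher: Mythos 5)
Your proposal is correct and follows essentially the same route as the paper's own proof: extend a homogeneous $\GG_a$-action to $X_{\aff}$ via Lemma~\ref{lem:Quasi-affine_group_action}, classify it by Proposition~\ref{prop.G_a-actions_on_affine_toric_var}, reduce invariance of $X$ to invariance of each irreducible component $V(\tau_i)$ of $X_{\aff}\setminus X$ using Lemma~\ref{lem.toric_qusi-affine}\eqref{lem.toric_qusi-affine3} and the connectedness of $\GG_a$, and conclude with Lemma~\ref{Lem:Orbit_closure_invariant}. The only difference is cosmetic: you organize the argument as a single bijection and make explicit two small points the paper leaves implicit (that the extension stays homogeneous, by density, and that scaling a locally nilpotent derivation does not change which closed sets are invariant).
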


\begin{proof}
	Assume that $\partial_{\rho, e}$ is a locally nilpotent derivation of $\OO(X)$
	such that~\eqref{Eq:Classification_LND_Quasi-affine} is satisfied for all $i =1, \ldots, s$. 
	Then by Lemma~\ref{Lem:Orbit_closure_invariant}, the sets $V(\tau_1), \ldots, V(\tau_s) \subset X_{\aff}$
	are left invariant by the homogeneous $\GG_a$-action 
	$\varepsilon_{\rho, e} \colon \GG_a \times X_{\aff} \to X_{\aff}$ 
	which is induced by $\partial_{\rho, e}$. 
	In particular, $X = X_{\aff}Ê\setminus (V(\tau_1) \cup \ldots V(\tau_s))$ (see Lemma~\ref{lem.toric_qusi-affine}) is left 
	invariant by $\varepsilon_{\rho, e}$. 
%	The same is obviously true for a constant multiple of 
%	$\partial_{\rho, e}$.
	
	On the other hand, let $\varepsilon \colon \GG_a \times X \to X$ be a homogeneous $\GG_a$-action on $X$.
	By Lemma~\ref{lem:Quasi-affine_group_action} and Proposition~\ref{prop.G_a-actions_on_affine_toric_var} 
	this $\GG_a$-action extends to a homogeneous $\GG_a$-action
	$\varepsilon_{\rho, e} \colon \GG_a \times X_{\aff} \to X_{\aff}$ which is induced
	by some locally nilpotent derivation $\lambda \cdot \partial_{\rho, e} \in \LND_{\kk}(\OO(X))$ for some
	constant $\lambda \in \kk$, some
	extremal ray $\rho$ in $\sigma$ and some $e \in S_{\rho}$. Since $\varepsilon_{\rho, e}$ extends
	$\varepsilon$, the subset $V(\tau_1) \cup \ldots \cup V(\tau_s) = X_{\aff} \setminus X$ is left
	invariant by $\varepsilon_{\rho, e}$. Since the $V(\tau_1), \ldots, V(\tau_s)$ are the 
	irreducible components of $X_{\aff} \setminus X$ and since 
	$\GG_a$ is an irreducible algebraic group, it follows that $\varepsilon_{\rho, e}$ preserves each $V(\tau_i)$.
	By Lemma~\ref{Lem:Orbit_closure_invariant} we get that for each $i=1, \ldots, s$ 
	the condition~\eqref{Eq:Classification_LND_Quasi-affine} is satisfied.
\end{proof}

For the next consequences of Corollary~\ref{Cor:Classification_LND_Quasi-affine} we recall the 
following notation from Sect.~\ref{sec.Cones_and_asymptotic_cones}: For a
subset $E \subset M_{\RR}$ we denote by $\inter(E)$ the topological interior of $E$ 
inside the linear span of $E$. In these consequences we give a closer
description of the weights in $M$ arising from homogeneous $\GG_a$-actions on quasi-affine toric varieties
and compute the asymptotic cone of these weights.

\begin{corollary}
	\label{cor.Weights_rho}
	Let $X = X_{\Sigma}$ be a quasi-affine toric variety, let $X_{\aff}=X_{\sigma}$, let 
	$\rho \subset \sigma$ be an extremal ray and let $D_{\rho}(X)$
	be the set of weights $e \in S_\rho$ such that the locally nilpotent derivation 
	$\partial_{\rho, e}$ of $\OO(X)$ induces a homogeneous $\GG_a$-action on $X$. Then
	\[ 
		S_{\rho} \cap \inter(\sigma_{\rho}^\vee) \subset
		D_{\rho}(X) \subset S_\rho \, .
	\]
\end{corollary}

\begin{proof}
	Let $e \in S_{\rho} \subset M$ such that $e$ is contained in 
	$\inter(\sigma_{\rho}^\vee) \subset M_{\RR}$.
	Let $\tau_1, \ldots, \tau_s$ be the minimal faces of $\sigma$ which are not contained in $\Sigma$.
	According to Corollary~\ref{Cor:Classification_LND_Quasi-affine} it is enough to show that for each $\tau_i$
	with $\rho \subset \tau_i$ we have $e \not\in (\tau_i)_{\rho}^\bot$. By 
	Lemma~\ref{lem.toric_qusi-affine}~\eqref{lem.toric_qusi-affine4} we get that $\dim \tau_i \geq 2$
	for every $i$. Hence, $\dim (\tau_i)_\rho \geq 1$
	and thus $(\tau_i)_\rho^\bot \cap \sigma_\rho^\vee$ is a proper face of $\sigma_\rho^\vee$. As
	$e \in \inter(\sigma_\rho^\vee)$, we get 
	$e \not\in (\tau_i)_\rho^\bot \cap \sigma_\rho^\vee$ and thus $e \not\in (\tau_i)_\rho^\bot$.
\end{proof}

\begin{corollary}
	\label{cor.Weights}
	Let $X = X_\Sigma$ be a quasi-affine toric variety. Let $X_{\aff} = X_{\sigma}$ 
	and let $D(X)$ be the set of weights $e \in M$ such that there
	is a non-trivial homogeneous $\GG_a$-action on $X$ of weight $e$. Then the asymptotic cone of 
	$D(X) \subset M_{\RR}$ satisfies
	\[
		D(X)_{\infty} = \sigma^\vee \setminus \inter(\sigma^\vee) \, .
	\]
\end{corollary}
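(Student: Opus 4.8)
The plan is to reduce the computation to the individual extremal rays of $\sigma$ and then feed the resulting sets into the asymptotic-cone machinery of Section~\ref{sec.Cones_and_asymptotic_cones}. By Proposition~\ref{prop.G_a-actions_on_affine_toric_var} together with Corollary~\ref{Cor:Classification_LND_Quasi-affine}, every non-trivial homogeneous $\GG_a$-action on $X$ is induced by a non-zero constant multiple of a derivation $\partial_{\rho, e}$ for some extremal ray $\rho$ of $\sigma$ and some $e \in S_\rho$, and its weight is $e$. Hence, with $D_\rho(X)$ as in Corollary~\ref{cor.Weights_rho}, we have the finite decomposition $D(X) = \bigcup_\rho D_\rho(X)$, the union ranging over the (finitely many) extremal rays $\rho$ of $\sigma$. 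Applying Lemma~\ref{lem.Properties_asymptotic_cone} then gives $D(X)_\infty = \bigcup_\rho (D_\rho(X))_\infty$, so it suffices to compute each $(D_\rho(X))_\infty$.

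First I would record the key identity $\sigma_\rho^\vee \cap \rho^\bot = \sigma^\vee \cap \rho^\bot$: for $u \in \rho^\bot$ one has $\langle u, v_\rho \rangle = 0$, so since $\sigma = \sigma_\rho + \RR_{\geq 0} v_\rho$ the inequality $\langle u, \cdot\rangle \geq 0$ holds on $\sigma$ if and only if it holds on $\sigma_\rho$. The right-hand side $\sigma^\vee \cap \rho^\bot$ is, under the order-reversing bijection between the faces of $\sigma$ and those of $\sigma^\vee$, exactly the facet of $\sigma^\vee$ dual to the extremal ray $\rho$; in particular (using that $\sigma^\vee$ is full-dimensional, as $\sigma$ is strongly convex) it is a rational cone of dimension $\dim \rho^\bot = \dim M_\RR - 1$.

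Now fix an extremal ray $\rho$ and exploit the bracketing of Corollary~\ref{cor.Weights_rho}, namely $S_\rho \cap \inter(\sigma_\rho^\vee) \subset D_\rho(X) \subset S_\rho$. Taking asymptotic cones and using monotonicity (Lemma~\ref{lem.Properties_asymptotic_cone}), it is enough to show that the two outer terms have the same asymptotic cone $\sigma^\vee \cap \rho^\bot$. Set $H' := \{\, u \in M_\RR \mid \langle u, v_\rho\rangle = -1 \,\} = \gamma + \rho^\bot$, where $\gamma \in M$ satisfies $\langle \gamma, v_\rho\rangle = -1$ (such $\gamma$ exists by primitivity of $v_\rho$, and $\gamma \notin \rho^\bot$). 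For the upper bound, $S_\rho \subset \sigma_\rho^\vee \cap H'$, and since $S_\rho \neq \varnothing$ by Remark~\ref{rem.S_rho_non-empty}, Lemma~\ref{lem.Weights_facet_translated} yields $(S_\rho)_\infty \subset (\sigma_\rho^\vee \cap H')_\infty = \sigma_\rho^\vee \cap \rho^\bot = \sigma^\vee \cap \rho^\bot$. For the lower bound I would apply Proposition~\ref{prop.Weights_facet} with $V = M_\RR$, $\Lambda = M$, $C = \sigma_\rho^\vee$, $H = \rho^\bot$ and the affine hyperplane $H'$ above. Its hypotheses hold because $\rho^\bot \cap M$ has rank $\dim \rho^\bot$ and $C \cap H = \sigma^\vee \cap \rho^\bot$ is rational of dimension $\dim H$ by the facet property recorded above; part~\eqref{prop.Weights_facet1} then guarantees $S_\rho \cap \inter(\sigma_\rho^\vee) = \inter(C) \cap H' \cap \Lambda \neq \varnothing$, and part~\eqref{prop.Weights_facet2} gives $(S_\rho \cap \inter(\sigma_\rho^\vee))_\infty = C \cap H = \sigma^\vee \cap \rho^\bot$. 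Combining the two estimates pins down $(D_\rho(X))_\infty = \sigma^\vee \cap \rho^\bot$.

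Finally, summing over all extremal rays gives $D(X)_\infty = \bigcup_\rho (\sigma^\vee \cap \rho^\bot)$, the union of all facets of $\sigma^\vee$, which is precisely the topological boundary $\sigma^\vee \setminus \inter(\sigma^\vee)$. The step I expect to require the most care is the verification of the hypotheses of Proposition~\ref{prop.Weights_facet} — especially identifying $\sigma_\rho^\vee \cap \rho^\bot$ with the facet $\sigma^\vee \cap \rho^\bot$ and checking its dimension and rationality. Once that identification is in place the two asymptotic-cone estimates coincide and everything else is bookkeeping with Lemma~\ref{lem.Properties_asymptotic_cone}.
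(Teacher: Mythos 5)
Your proof is correct and takes essentially the same route as the paper's: the same decomposition $D(X) = \bigcup_{\rho} D_{\rho}(X)$ over extremal rays, the same bracketing $S_\rho \cap \inter(\sigma_\rho^\vee) \subset D_\rho(X) \subset S_\rho$ from Corollary~\ref{cor.Weights_rho}, Lemma~\ref{lem.Weights_facet_translated} for the upper estimate, and Proposition~\ref{prop.Weights_facet} with $\Lambda = M$, $C = \sigma_\rho^\vee$, $H = \rho^\bot$, $H' = \left\{ u \in M_{\RR} \mid \langle u, v_\rho\rangle = -1 \right\}$ for the lower estimate. The only cosmetic differences are that you obtain the hypothesis $C \cap H' \neq \varnothing$ from Remark~\ref{rem.S_rho_non-empty} where the paper argues directly from $\sigma_\rho \subsetneq \sigma$, and that you prove explicitly the identity $\sigma_\rho^\vee \cap \rho^\bot = \sigma^\vee \cap \rho^\bot$ which the paper uses implicitly.
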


\begin{proof}
	We have
	\[
		\sigma^\vee \setminus \inter(\sigma^\vee) = \bigcup_{\substack{
			\textrm{$\rho$ is an extr.} \\
			\textrm{ray of $\sigma$}
		}} \sigma^\vee \cap \rho^\bot \, .
	\]
	Since $D(X)$ is the union of the $D_{\rho}(X)$ for the extremal rays $\rho \subset \sigma$
	(with the definition of $D_\rho(X)$ from Corollary~\ref{cor.Weights_rho}), 
	we get by Lemma~\ref{lem.Properties_asymptotic_cone} that
	\[
		D(X)_{\infty} = \bigcup_{\substack{
				\textrm{$\rho$ is an extr.} \\
				\textrm{ray of $\sigma$}
		}} D_{\rho}(X)_{\infty} \, .
	\]
	Now, we want to apply Proposition~\ref{prop.Weights_facet} for an extremal ray $\rho$ in $\sigma$. 
	For this we fix the lattice $\Lambda = M$ inside $V = M_{\RR}$ and consider
	the convex polyhedral cone $C =\sigma_{\rho}^\vee$ inside $V$
	and the hyperplane $H = \rho^\bot \subset V$. Note that
	$C \cap H = \sigma^\vee \cap \rho^\bot$ is a rational convex polyhedral cone in $V$ of dimension
	$\dim H$ and that $H \cap \Lambda$ has rank $\dim H$. 
	Moreover there exists $m_{-1} \in M \setminus H$ such that 
	$H' = \set{u \in M_{\RR}}{\sprod{u}{v_{\rho}} =-1} = m_{-1} + H$ where $v_\rho \in \rho$
	denote the unique primitive generator. Since $\rho$ is an extremal ray
	of $\sigma$, it follows that $\sigma_{\rho} \subsetneq \sigma$ and thus 
	$\sigma_{\rho}^\vee \supsetneq \sigma^\vee = 
	\sigma_{\rho}^\vee \cap \set{u \in M_{\RR}}{\sprod{u}{v_{\rho}}\geq 0}$. This implies that
	there is $u \in C$ with $\sprod{u}{v_{\rho}}<0$. Since $C$
	is a cone, we get that $C \cap H'$ is non-empty. Now, Proposition~\ref{prop.Weights_facet} applied to $\Lambda, C, H, H' \subset V$ 
	implies that
	\[
		\label{eq.1}
		\tag{$d$}
		\sigma^\vee \cap \rho^\bot = \sigma_{\rho}^\vee \cap \rho^\bot = 
		(S_{\rho} \cap \inter(\sigma_{\rho}^\vee))_{\infty} \, .
	\]
	By Corollary~\ref{cor.Weights_rho} and Lemma~\ref{lem.Properties_asymptotic_cone} we get
	\[
		\label{eq.2}
		\tag{$e$}
		(S_{\rho} \cap \inter(\sigma_{\rho}^\vee))_{\infty} \subset D_{\rho}(X)_{\infty}
		\subset (S_{\rho})_{\infty} 
		\subset (\sigma_{\rho}^\vee \cap (m_{-1} + \rho^\bot))_{\infty}
		\subset \sigma_{\rho}^\vee \cap \rho^\bot
		\, .
	\]
	Combining~\eqref{eq.1} and~\eqref{eq.2} yields
	$\sigma^\vee \cap \rho^\bot = D_{\rho}(X)_{\infty}$ which implies the result.
\end{proof}

In the sequel we also need that $D(X)$ generates $M$ as a group.  

\begin{proposition}
	\label{prop.DX_generates_M_as_a_group}
	Let $X$ be a quasi-affine toric variety and let $D(X)$ be the set of all weights $e \in M$
	such that there is a non-trivial 
	homogeneous $\GG_a$-action on $X$ of weight $e$. If 
	$X \not \simeq T$, then $D(X)$ generates $M$ as a group.
\end{proposition}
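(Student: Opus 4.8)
The plan is to exhibit, for a single well-chosen extremal ray $\rho$ of $\sigma$, enough weights lying in $D(X)$ to generate $M$ over $\ZZ$. Recall from Lemma~\ref{lem.toric_qusi-affine} that $X_{\aff} = X_\sigma$. If $\sigma = \{0\}$, then every maximal cone of $\Sigma$ equals $\{0\}$, whence $\Sigma = \{\{0\}\}$ and $X = T$, contrary to $X \not\simeq T$. Thus $\sigma \neq \{0\}$ and I may fix an extremal ray $\rho \subset \sigma$ with primitive generator $v_\rho \in N$. Throughout I use that $D(X) = \bigcup_{\rho'} D_{\rho'}(X)$ (see the proof of Corollary~\ref{cor.Weights}) together with the inclusion $S_\rho \cap \inter(\sigma_\rho^\vee) \subset D_\rho(X)$ from Corollary~\ref{cor.Weights_rho}; in particular every element of $S_\rho \cap \inter(\sigma_\rho^\vee)$ lies in $D(X)$.

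First I would produce a good base point. Applying Proposition~\ref{prop.Weights_facet}\eqref{prop.Weights_facet1} to $C = \sigma_\rho^\vee$, $H = \rho^\bot$, $H' = \set{u \in M_\RR}{\sprod{u}{v_\rho} = -1}$ and $\Lambda = M$ inside $V = M_\RR$ (here $C \cap H = \sigma^\vee \cap \rho^\bot$ is the facet of $\sigma^\vee$ dual to $\rho$, hence rational of dimension $\dim H$, while $C \cap H' \neq \varnothing$ because $v_\rho \notin \sigma_\rho$ forces some $u \in \sigma_\rho^\vee$ with $\sprod{u}{v_\rho} < 0$) yields a lattice point
\[
	s_0 \in S_\rho \cap \inter(\sigma_\rho^\vee) \subset D(X) \, .
\]
Next I claim that the whole dual facet contributes: for every $w \in \sigma^\vee \cap \rho^\bot \cap M$ one has $s_0 + w \in D(X)$. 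Indeed $\sigma^\vee \cap \rho^\bot = \sigma_\rho^\vee \cap \rho^\bot \subset \sigma_\rho^\vee$, so $s_0 + w \in \inter(\sigma_\rho^\vee) + \sigma_\rho^\vee \subset \inter(\sigma_\rho^\vee)$, while $\sprod{s_0+w}{v_\rho} = -1$ shows $s_0 + w \in S_\rho$; hence $s_0 + w \in S_\rho \cap \inter(\sigma_\rho^\vee) \subset D(X)$. Taking differences with $s_0$ gives
\[
	\sigma^\vee \cap \rho^\bot \cap M \subset \Span_\ZZ(D(X)) \, .
\]

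It remains to upgrade this to all of $M$. Since $\rho$ is an extremal ray, $\sigma^\vee \cap \rho^\bot$ is a rational cone of dimension $\dim \rho^\bot$ inside $\rho^\bot$, i.e. it is full-dimensional there, and a full-dimensional rational cone has the property that its lattice points generate the ambient lattice: choosing a lattice point $q$ in its relative interior, any $\ell \in \rho^\bot \cap M$ satisfies $kq + \ell \in \sigma^\vee \cap \rho^\bot$ for $k \gg 0$, so $\ell = (kq+\ell) - kq$ lies in the $\ZZ$-span of the cone's lattice points. Hence $\Span_\ZZ(\sigma^\vee \cap \rho^\bot \cap M) = \rho^\bot \cap M$, and therefore $\rho^\bot \cap M \subset \Span_\ZZ(D(X))$. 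Finally, because $v_\rho$ is primitive and $\sprod{s_0}{v_\rho} = -1$, the homomorphism $m \mapsto \sprod{m}{v_\rho}$ splits and gives $M = \ZZ s_0 \oplus (\rho^\bot \cap M)$; as $s_0 \in D(X)$ and $\rho^\bot \cap M \subset \Span_\ZZ(D(X))$, I conclude $\Span_\ZZ(D(X)) = M$.

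The main obstacle is the integrality in the last two steps: passing from spanning $M_\RR$ over $\RR$ (which would follow quickly from the asymptotic-cone computation $D(X)_\infty = \sigma^\vee \setminus \inter(\sigma^\vee)$ of Corollary~\ref{cor.Weights}) to generating the lattice $M$ over $\ZZ$. What makes this work is that the facet $\sigma^\vee \cap \rho^\bot$ is full-dimensional in $\rho^\bot$, so its lattice points generate $\rho^\bot \cap M$ rather than merely a finite-index sublattice, and that a single weight $s_0$ of height $-1$ against the primitive $v_\rho$ already supplies the missing transverse generator.
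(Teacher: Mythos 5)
Your proof is correct and follows essentially the same route as the paper's: there, too, one fixes an extremal ray $\rho$, obtains a weight $s_0 \in S_\rho \cap \inter(\sigma_\rho^\vee) \subset D(X)$ via Proposition~\ref{prop.Weights_facet}\eqref{prop.Weights_facet1}, observes that $s_0 + (\sigma_M^\vee \cap \rho^\bot) \subset S_\rho \cap \inter(\sigma_\rho^\vee)$, and then deduces that these weights generate $M$. The only (cosmetic) difference is the final lattice computation: the paper picks coordinates with $v_\rho = (1,0,\dots,0)$ and a lattice point of the dual facet containing a radius-one ball to extract the basis vectors, whereas you argue coordinate-free that the full-dimensional rational cone $\sigma^\vee \cap \rho^\bot$ has lattice points generating $\rho^\bot \cap M$ and then use the splitting $M = \ZZ s_0 \oplus (\rho^\bot \cap M)$.
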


Since $X \not\simeq T$, the cone $\sigma$ has an extremal ray $\rho$.
Proposition~\ref{prop.DX_generates_M_as_a_group} follows thus from the next lemma, since 
$S_\rho \cap \inter(\sigma_{\rho}^\vee) \subset D(X)$ (see Corollary~\ref{cor.Weights_rho}).

\begin{lemma}
	Let $\sigma \subset N_{\RR}$ be a strongly convex rational polyhedral cone. Then for every
	extremal ray $\rho \subset \sigma$, the set $S_{\rho} \cap \inter(\sigma_{\rho}^\vee)$ generates $M$ as a group. 
\end{lemma}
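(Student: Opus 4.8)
The plan is to exploit that every element of $W := S_\rho \cap \inter(\sigma_\rho^\vee)$ lies on the affine hyperplane $H' := \set{u \in M_\RR}{\sprod{u}{v_\rho} = -1}$, so that differences of elements of $W$ land in the linear hyperplane $\rho^\bot$. First I would note that $W \neq \varnothing$: this is exactly the content of Remark~\ref{rem.S_rho_non-empty}, since the conclusion of Proposition~\ref{prop.Weights_facet}\eqref{prop.Weights_facet1}, applied there to $C = \sigma_\rho^\vee$, $H = \rho^\bot$, the above $H'$ and $\Lambda = M$, reads $\inter(\sigma_\rho^\vee) \cap H' \cap M = W \neq \varnothing$. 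Fix $w_0 \in W$, so that $\sprod{w_0}{v_\rho} = -1$, and set $L_0 := M \cap \rho^\bot$. The statement then reduces to two claims: that $W - w_0$ generates $L_0$, and that $\ZZ w_0 + L_0 = M$. The latter is immediate: as $v_\rho \in N$ is primitive, $m \mapsto \sprod{m}{v_\rho}$ is a surjection $M \to \ZZ$ with kernel $L_0$, and since $\sprod{w_0}{v_\rho} = -1$ generates $\ZZ$, every $m \in M$ satisfies $m + \sprod{m}{v_\rho} w_0 \in L_0$, whence $m \in \ZZ w_0 + L_0$.

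The heart of the argument is to manufacture many elements of $W$. I claim
\[
	w_0 + \bigl( \sigma_\rho^\vee \cap \rho^\bot \cap M \bigr) \subseteq W \, .
\]
Indeed, for $h \in \sigma_\rho^\vee \cap \rho^\bot \cap M$ one has $\sprod{w_0 + h}{v_\rho} = -1 + 0 = -1$, so $w_0 + h \in H' \cap M$; and since $\sigma_\rho^\vee$ is a convex cone with $w_0 \in \inter(\sigma_\rho^\vee)$ and $h \in \sigma_\rho^\vee$, the standard inclusion $\inter(C) + C \subseteq \inter(C)$ for a convex cone $C$ gives $w_0 + h \in \inter(\sigma_\rho^\vee)$. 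Hence $w_0 + h \in S_\rho \cap \inter(\sigma_\rho^\vee) = W$. Consequently $W - w_0 \supseteq \sigma_\rho^\vee \cap \rho^\bot \cap M$, and it remains to show that the right-hand side generates $L_0 = M \cap \rho^\bot$ as a group.

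For this last step I would first verify that $\sigma_\rho^\vee \cap \rho^\bot$ is full-dimensional inside $\rho^\bot$. As $\sigma_\rho \subseteq \sigma$ is strongly convex, $\sigma_\rho^\vee$ is full-dimensional in $M_\RR$; moreover $v_\rho \in \sigma$ together with the strong convexity of $\sigma$ forces $-v_\rho \notin \sigma_\rho$, so some $u_+ \in \sigma_\rho^\vee$ has $\sprod{u_+}{v_\rho} > 0$, and the open segment joining $w_0$ (where the $v_\rho$-value is $-1$) to $u_+$ stays in $\inter(\sigma_\rho^\vee)$ and crosses $\rho^\bot$; thus $\inter(\sigma_\rho^\vee) \cap \rho^\bot \neq \varnothing$ and $\sigma_\rho^\vee \cap \rho^\bot$ spans $\rho^\bot$. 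The remaining input is the general fact that the lattice points of a full-dimensional convex cone $C$ in a vector space $V$ generate any full-rank lattice $L \subset V$: one picks $p \in \inter(C) \cap L$ (nonempty, as an open cone contains arbitrarily large balls and hence lattice points), and for any $\ell \in L$ and large $N \in \ZZ_{\geq 1}$ both $Np$ and $Np + \ell = N(p + \ell/N)$ lie in $C \cap L$, so $\ell = (Np + \ell) - Np$ lies in the subgroup generated by $C \cap L$. Applying this with $V = \rho^\bot$, $L = L_0$ and $C = \sigma_\rho^\vee \cap \rho^\bot$ shows that $\sigma_\rho^\vee \cap \rho^\bot \cap M$ generates $L_0$. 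Combined with the two reductions above, this yields $\langle W \rangle \supseteq \ZZ w_0 + L_0 = M$, and since $W \subseteq M$ we conclude $\langle W \rangle = M$. I expect the full-dimensionality of $\sigma_\rho^\vee \cap \rho^\bot$ and the lattice-generation fact to be the only genuinely geometric points; the rest is bookkeeping on $H'$.
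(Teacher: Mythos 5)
Your proposal is correct and takes essentially the same route as the paper: both establish non-emptiness of $W = S_\rho \cap \inter(\sigma_\rho^\vee)$ via Remark~\ref{rem.S_rho_non-empty} and Proposition~\ref{prop.Weights_facet}\eqref{prop.Weights_facet1}, both rest on the key inclusion $w_0 + (\sigma_\rho^\vee \cap \rho^\bot \cap M) \subseteq W$ (the paper writes it as $a + (\sigma_M^\vee \cap \rho^\bot) \subset A$, which is the same set since $\sigma^\vee \cap \rho^\bot = \sigma_\rho^\vee \cap \rho^\bot$), and both conclude by generating the hyperplane lattice $M \cap \rho^\bot$ from lattice points of the full-dimensional facet cone and then adjoining one generator transverse to $\rho^\bot$. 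The only difference is bookkeeping: the paper chooses coordinates with $v_\rho = (1,0,\ldots,0)$ and a lattice point whose unit ball lies in $\sigma^\vee \cap \rho^\bot$ to produce the basis vectors $e_2,\ldots,e_n$ directly, whereas you prove the same lattice-generation fact abstractly via the $Np$, $Np+\ell$ argument.
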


\begin{proof}
	Denote by $v_\rho \in \rho$ the unique primitive generator.
	By Remark~\ref{rem.S_rho_non-empty}, $S_\rho$ is non-empty.
	Thus by Proposition~\ref{prop.Weights_facet}\eqref{prop.Weights_facet1} 
	applied to the convex polyhedral cone $C = \sigma_{\rho}^\vee$
	and the hypersurfaces $H = \rho^\bot$, $H' = \set{u \in V}{\sprod{u}{v_{\rho}}=-1}$ in $V = M_{\RR}$
	we get $S_{\rho}  \cap \inter(\sigma_{\rho}^\vee) \neq \varnothing$.  
	Let $A = S_{\rho}  \cap \inter(\sigma_{\rho}^\vee)$ and choose $a \in A$.
	By definition of $S_{\rho}$,
	\[
		a + (\sigma_M^\vee \cap \rho^\bot) \subset A \, .
	\]
	Since $v_{\rho} \in N$ is primitive, 
	we can choose coordinates $N = \ZZ^n$ (where $n = \rank N$) such 
	that $v_{\rho} = (1, 0, \ldots, 0)$. We then identify $M = \Hom_{\ZZ}(N, \ZZ)$
	with $\ZZ^n$ by choosing the dual basis of $N = \ZZ^n$.
	Since $\sigma^\vee \cap \rho^\bot$ is a convex rational polyhedral cone of dimension 
	$\dim \rho^\bot$ in $\rho^\bot$, there is $m \in \sigma_M^\vee \cap \rho^\bot$ such that
	the closed ball of radius $1$ and centre $m$ in $\rho^\bot$ is contained in 
	$\sigma^\vee \cap \rho^\bot$. In particular, $m + e_i \in \sigma_M^\vee \cap \rho^\bot$
	for $i =2, \ldots, n$, where $e_i = (0, \ldots, 0, 1, 0, \ldots, 0)$ and $1$ is at position $i$. 
	In particular,
	\[
		e_i = (a + m + e_i) - (a+m) \in \Span_{\ZZ}(A) \quad \textrm{for $i=2, \ldots, n$} \, .
	\]
	Since $v_\rho = (1, 0, \ldots, 0)$ and $\sprod{a}{v_{\rho}} = -1$, it follows that
	$a = (-1, a_2, \ldots, a_n)$ for certain $a_2, \ldots, a_n \in \ZZ$. 
	In particular, $(1, 0, \ldots, 0) = -a + \sum_{i=2}^n a_i e_i \in \Span_{\ZZ}(A)$.
	Thus, $\Span_{\ZZ}(A) = M$.
\end{proof}

%\section{A characterization of quasi-affine spherical varieties}\label{A_characterization_of}

\section{The automorphism group determines sphericity}
\label{sec.determination_of_sphericity}

%There are a lot of different equivalent characterizations of spherical varieties, 
%see e.g. \cite[\S25]{Ti2011Homogeneous-spaces}. 
%For our purposes, we need a characterization using certain
%unipotent subgroups. 

Our first goal in this section is to give a criterion, when a solvable algebraic group $B$
acts with an open orbit on a quasi-affine $B$-variety.
For this, we introduce the notion of \emph{generalized root
subgroups}:

\begin{definition}
	Let $H$ be an algebraic group and let $X$ be a $H$-variety. 
	We call an algebraic subgroup $U_0 \subseteq \Aut(X)$ of dimension $m$ 
	which is isomorphic to $(\GG_a)^m$ a
	\emph{generalized root subgroup (with respect to $H$)} 
	if there exists a character $\lambda \in \frak{X}(H)$, called the \emph{weight of $U_0$} such that
	\[
		h \circ \varepsilon(t) \circ h^{-1} = \varepsilon(\lambda(h) \cdot t) \quad 
		\textrm{for all $h \in H$ and all $t \in (\GG_a)^m$}
	\]
	where $\varepsilon \colon (\GG_a)^m \simeq U_0$ is a fixed isomorphism.
\end{definition}

Using that a group automorphism of $(\GG_a)^m$ is $\kk$-linear, we
see that the weight of a generalized root subgroup $U_0$ does not depend on the choice 
of an isomorphism $\varepsilon \colon (\GG_a)^m \simeq U_0$.

\begin{remark}
	\label{rem.equivalent_def_gen_root_subgroup}
	Let $H$ be an algebraic group and let $X$ be an $H$-variety. 
	Using again that algebraic 
	group automorphisms of $(\GG_a)^m$ are $\kk$-linear, one can see the following:
	An algebraic subgroup $U_0 \subset \Aut(X)$
	which is isomorphic to $(\GG_a)^m$ for some $m \geq 1$
	is a generalized root subgroup with respect to $H$
	if and only if each one-dimensional closed subgroup of $U_0$ is a root subgroup of $\Aut(X)$ 
	with respect to $H$.
	In particular, root subgroups are generalized root subgroups of dimension one.
\end{remark}

%For the following characterization of spherical varieties, 
%let $G$ be a reductive group, $B \subseteq G$ a fixed Borel subgroup, $T \subset B$
%a fixed maximal torus, $U \subset B$ the unipotent radical of $B$  
%and $X$ a quasi-affine normal irreducible $G$-variety.

\begin{proposition}
	\label{prop:characterization_of_aff_spherical_var}
	Let $B$ be a connected solvable algebraic group that contains unipotent elements and
	let $X$ be an irreducible quasi-affine variety with a faithfull $B$-action. 
	Then, the following statements are equivalent:
	\begin{enumerate}[leftmargin=*]
		\item \label{item:Open_B-orbit} The variety $X$ has an open $B$-orbit;
		\item \label{item:Boundedness_of_homogeneous_LNDs}
		There is a constant $C$ such that $\dim \Ve(X)_{\lambda} \leq C$
		for all weights $\lambda \in \frak{X}(B)$; 
		%		where
		%		\[
		%			V_\lambda = \Span_{\kk} \Bigset{\xi \in \Ve(X)}{
		%			\begin{array}{l}
		%				\textrm{$\xi$ is in the image of $\Lie \GG_a \to \Ve(X)$,} \\
		%				\textrm{$A \mapsto \xi_A$ for a $B$-homogeneous} \\
		%				\textrm{$\GG_a$-action on $X$ of weight $\lambda$}
		%			\end{array}
		%			}
		%		\]
		%		(see Subsect.~\ref{sec.GroupActionsAndVectorfields} for the definition of $A \mapsto \xi_A$).
		\item \label{item:Commutative_unipot_subgroups}
		There exists a constant $C$ such that 
		$\dim U_0 \leq C$ for each $U_0 \subseteq \Aut(X)$ which is a
		generalized root subgroup with respect to $B$.
	\end{enumerate}
\end{proposition}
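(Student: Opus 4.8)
The plan is to prove the cycle of implications \eqref{item:Open_B-orbit} $\Rightarrow$ \eqref{item:Boundedness_of_homogeneous_LNDs} $\Rightarrow$ \eqref{item:Commutative_unipot_subgroups} $\Rightarrow$ \eqref{item:Open_B-orbit}. The implication \eqref{item:Open_B-orbit} $\Rightarrow$ \eqref{item:Boundedness_of_homogeneous_LNDs} is immediate from Lemma~\ref{lem:Normalized_Derivations}: if $X$ has an open $B$-orbit, then for $x_0$ in that orbit $\Ve(X)_\lambda$ injects into a weight subspace of $T_{x_0}X$, so $C = \dim X$ bounds $\dim \Ve(X)_\lambda$ uniformly in $\lambda$. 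For \eqref{item:Boundedness_of_homogeneous_LNDs} $\Rightarrow$ \eqref{item:Commutative_unipot_subgroups} I would fix a generalized root subgroup $U_0 \subseteq \Aut(X)$ of weight $\lambda$ and embed its Lie algebra into $\Ve(X)_\lambda$: the action of $U_0$ on $X$ is faithful, so $\Lie U_0 \to \Ve(X)$, $A \mapsto \xi_A$, is injective by Lemma~\ref{lem.faithfull_action}; by Remark~\ref{rem.equivalent_def_gen_root_subgroup} every one-parameter subgroup of $U_0$ is a root subgroup of weight $\lambda$, so Lemma~\ref{lem.H-homogeneous_Ga-action} shows the image lies in $\Ve(X)_\lambda$. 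Hence $\dim U_0 \le \dim \Ve(X)_\lambda \le C$.

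The substantial implication is \eqref{item:Commutative_unipot_subgroups} $\Rightarrow$ \eqref{item:Open_B-orbit}, which I would prove by contraposition: assuming $X$ has no open $B$-orbit, I construct generalized root subgroups of arbitrarily large dimension. The first step is to exhibit a single root subgroup sitting inside $B$. Write $B = T \ltimes U$ with $U = R_u(B)$, which is nontrivial because $B$ contains unipotent elements. The last nontrivial term of the descending central series of $U$ is a $T$-stable central vector subgroup of $U$, and a $T$-weight vector in its Lie algebra exponentiates to a one-parameter subgroup $\varepsilon_0 \colon \GG_a \hookrightarrow U \subseteq \Aut(X)$. Centrality in $U$ makes $U$-conjugation trivial on $\varepsilon_0(\GG_a)$, while $T$ scales it by the chosen character, so $\varepsilon_0$ is $B$-homogeneous of some weight $\lambda_0$; let $D_0 \neq 0$ be the associated locally nilpotent derivation of $\OO(X)$.

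The key point is that every $B$-semi-invariant $f \in \OO(X)$ lies in $\ker D_0$, since characters of $B$ are trivial on $U \supseteq \varepsilon_0(\GG_a)$; consequently $fD_0$ is again locally nilpotent, and it preserves the ideal $I$ of $X_{\aff} \setminus X$ because $D_0(I) \subseteq I$ and $I$ is an ideal, so $fD_0$ integrates to a $\GG_a$-action on $X$ itself that is $B$-homogeneous of weight $\lambda_0$ plus the weight of $f$. To make the dimension blow up, I use that the absence of a dense orbit gives, by Rosenlicht's theorem, a non-constant $B$-invariant rational function $g \in \kk(X)^B$; by Proposition~\ref{prop:H-semi-invariants} I may write $g = f_1/f_2$ with $f_1, f_2 \in \OO(X)$ being $B$-semi-invariant, and comparing weights forces $f_1, f_2$ to share a weight $\mu$, with $f_1, f_2$ linearly independent as $g \notin \kk$. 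Then for each $k$ the elements $f_1^a f_2^{k-a} = g^a f_2^k$ with $0 \le a \le k$ are $k+1$ linearly independent $B$-semi-invariants of the single weight $k\mu$. The derivations $f_1^a f_2^{k-a} D_0$ all have the form $hD_0$ with $h \in \ker D_0$, so every $\kk$-linear combination is again locally nilpotent and preserves $X$, and any two of them commute since $[fD_0, hD_0] = (fD_0(h) - hD_0(f))D_0 = 0$ for $f, h \in \ker D_0$; exponentiating this $(k+1)$-dimensional space yields a generalized root subgroup $\cong (\GG_a)^{k+1}$ of weight $\lambda_0 + k\mu$. Letting $k \to \infty$ contradicts \eqref{item:Commutative_unipot_subgroups}.

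I expect the heart of the difficulty to be this last construction, i.e.\ converting the purely qualitative failure of a dense orbit into an unbounded family of honest automorphisms of $X$. Two points need care: ensuring that the modified action $fD_0$ preserves the open subset $X$ rather than only $X_{\aff}$, which the ideal-stability argument $D_0(I)\subseteq I \Rightarrow fD_0(I) \subseteq I$ settles; and producing arbitrarily many regular semi-invariants of a \emph{fixed} weight out of a single non-constant rational invariant, which is exactly what the power trick $f_1^a f_2^{k-a}$ accomplishes.
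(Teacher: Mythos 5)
Your proof is correct and follows essentially the same route as the paper: the same two easy implications via Lemmas~\ref{lem:Normalized_Derivations}, \ref{lem.faithfull_action} and~\ref{lem.H-homogeneous_Ga-action}, and the same contrapositive argument for \eqref{item:Commutative_unipot_subgroups}$\Rightarrow$\eqref{item:Open_B-orbit} (Rosenlicht, $g = f_1/f_2$ via Proposition~\ref{prop:H-semi-invariants}, and the fixed-weight powers $f_1^a f_2^{k-a}$ twisting a central $B$-homogeneous $\GG_a$-action). The only difference is presentational: the paper writes the $(\GG_a)^{m+1}$-action directly as $\left((t_0,\ldots,t_m),x\right) \mapsto \rho\left(\sum_i t_i (f_1^i f_2^{m-i})(x), x\right)$, which stays on $X$ throughout and thereby sidesteps the LND-integration and ideal-stability checks on $X_{\aff}$ that your formulation needs (and which you do carry out correctly).
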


\begin{proof}
	\eqref{item:Open_B-orbit}$\implies$\eqref{item:Boundedness_of_homogeneous_LNDs}:
	By Lemma~\ref{lem:Normalized_Derivations} we get 
	$\dim \Ve(X)_{\lambda} \leq \dim T_{x_0} X$ where $x_0 \in X$ is a fixed element
	of the open $B$-orbit. 
	
	\eqref{item:Boundedness_of_homogeneous_LNDs}
	$\implies$\eqref{item:Commutative_unipot_subgroups}: Let $U_0 \subseteq \Aut(X)$
	be a generalized root subgroup of weight $\lambda \in \frak{X}(B)$. 
	By Lemma~\ref{lem.faithfull_action}, the $\kk$-linear map 
	$\Lie(U_0) \to \Ve(X)$, $A \mapsto \xi_A$ is injective. 
	Now, take $A \in \Lie(U_0)$ which is non-zero. Then there is a one-parameter
	unipotent subgroup $U_{0, A} \subset U_0$ such that $\Lie(U_{0, A})$ is generated by $A$.
	By definition, $U_{0, A}$ is a root subgroup with respect to $B$ of weight $\lambda$.
	By Lemma~\ref{lem.H-homogeneous_Ga-action}, it follows that $\xi_A$ lies in $\Ve(X)_{\lambda}$.
	Thus the whole image of $\Lie(U_0) \to \Ve(X)$ lies in $\Ve(X)_\lambda$ and we get
	$\dim U_0 \leq \dim \Ve(X)_\lambda$.
	
	\eqref{item:Commutative_unipot_subgroups}$\implies$\eqref{item:Open_B-orbit}:
	Assume that $X$ admits no open $B$-orbit.
	This implies by Rosenlicht's Theorm~\cite[Theorem~2]{Ro1956Some-basic-theorem} 
	that there is a $B$-invariant non-constant
	rational map $f \colon X \dasharrow \kk$.	
	By Proposition~\ref{prop:H-semi-invariants}, 
	there exist $B$-semi-invariant regular functions $f_1, f_2 \colon X \to \kk$ 
	such that $f = f_1 / f_2$ and since $f$ is $B$-invariant, the weights of 
	$f_1$ and $f_2$ under $B$ are the same, say $\lambda_0 \in \frak{X}(B)$. 
	
	Moreover, there exists no non-zero homogeneous
	polynomial $p$ in two variables with  $p(f_1, f_2) = 0$. Indeed, otherwise
	there exist $m > 0$ and a non-zero tuple $(a_0, \ldots, a_m) \in \kk^{m+1}$ such that
	$\sum_{i=0}^m a_i (f_1)^i (f_2)^{m-i} = 0$ and hence 
	$\sum_{i=0}^m a_i f^i = 0$. Since $f$ is non-constant, we get
	a contradiction, as $\kk$ is algebraically closed.
	
	Since $B$ contains unipotent elements, the centre of the unipotent radical in $B$ is non-trivial.
	Let $\rho \colon \GG_a \times X \to X$ be the $\GG_a$-action
	that corresponds to a one-dimensional subgroup of this centre Hence 
	$\rho$ is $B$-homogeneous for some weight $\lambda_1 \in \frak{X}(B)$.
	Thus for any $m \geq 0$, we get a faithful $(\GG_a)^{m+1}$-action on $X$ given by
	\[
		\GG_a^{m+1} \times X \to X \, , \quad 
		((t_0, \ldots, t_m), x) \mapsto 
		\rho\left(\sum_{i=0}^m t_i (f_1^i \cdot f_2^{m-i})(x), x \right) \, ,
	\]
	since $\sum_{i=0}^m t_i f_1^i \cdot f_2^{m-i} \neq 0$ for all non-zero
	$(t_0, \ldots, t_m)$.
	The corresponding subgroup $U_0$ in $\Aut(X)$ is then 
	a generalized root subgroup of dimension $m+1$
	with respect to $B$ of weight $\lambda_1 + m \lambda_0 \in \frak{X}(B)$.
	As $m$ was arbitrary, \eqref{item:Commutative_unipot_subgroups} is not satisfied.
\end{proof}

\begin{example}
	If the connected solvable algebraic group $B$ does not contain unipotent elements, then 
	Proposition~\ref{prop:characterization_of_aff_spherical_var} is in general false: Let 
	$B = \GG_m$ act on $X = \AA^2$ via $t \cdot (x, y) = (x, ty)$ and consider for
	each $m \geq 0$ the faithful $(\GG_a)^{m+1}$-action:
	\[
		\GG_a^{m+1} \times X \to X \, , \quad 
		((t_0, \ldots, t_m), x) \mapsto 
		(x, y + t_0 + t_1 x + \ldots + t_m x^m) \, .
	\]
	The corresponding subgroup in $\Aut(X)$ is then a generalized 
	root subgroup with respect to $B$ of weight $\id \colon \GG_m \to \GG_m$, $s \mapsto s$.
\end{example}

\begin{lemma}
	\label{lem.open_orbit_of_solvable_group}
 	Let $T$ be an algebraic torus and let $X$ be a quasi-affine 
	$T$-toric variety such that $X \not\simeq T$.
 	Then there exists a non-trivial $T$-homogeneous $\GG_a$-action on $X$ and an
 	subtorus $T' \subset T$ of codimension one such that the induced
 	$\GG_a \rtimes T'$-action on $X$ has an open orbit.
\end{lemma}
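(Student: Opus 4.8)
The plan is to manufacture the $\GG_a$-action from \name{Liendo}'s classification and then choose $T'$ so that, on the open torus orbit, the single direction supplied by $\GG_a$ exactly compensates for the codimension-one direction of $T$ that $T'$ omits. First I would invoke Lemma~\ref{lem.toric_qusi-affine} to write $X = X_\Sigma$ with $X_{\aff} = X_\sigma$ for a strongly convex rational polyhedral cone $\sigma \subset N_\RR$. Since $X \not\simeq T$, the cone $\sigma$ is non-zero and hence has an extremal ray $\rho$; let $v_\rho \in \rho \cap N$ be its primitive generator. By Proposition~\ref{prop.Weights_facet}\eqref{prop.Weights_facet1} (as in the proof of the previous lemma) the set $S_\rho \cap \inter(\sigma_\rho^\vee)$ is non-empty, so I may fix $e \in S_\rho \cap \inter(\sigma_\rho^\vee)$. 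By Corollary~\ref{cor.Weights_rho} the derivation $\partial_{\rho,e}$ induces a $\GG_a$-action on the quasi-affine variety $X$; it is non-trivial, since $\partial_{\rho,e}(\chi^m) = \sprod{m}{v_\rho}\chi^{e+m} \neq 0$ whenever $\sprod{m}{v_\rho} \neq 0$, and by Proposition~\ref{prop.G_a-actions_on_affine_toric_var} it is $T$-homogeneous of weight $e \in M = \frak{X}(T)$. This supplies the required $\GG_a$-action.

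The heart of the argument is a computation on the open $T$-orbit, which I identify with $T = \Spec \kk[M]$. Let $\lambda := v_\rho \in N = \Hom(\GG_m, T)$ be the one-parameter subgroup given by the primitive generator of $\rho$, and let $\partial_\lambda$ be the derivation of $\kk[M]$ it induces, namely $\partial_\lambda(\chi^m) = \sprod{m}{\lambda}\chi^m$; as a vector field $\partial_\lambda$ is the fundamental field of $\lambda$, and it does not vanish on $T$ because $\mathrm{d}\lambda \neq 0$. Since $\partial_{\rho,e}(\chi^m) = \sprod{m}{v_\rho}\chi^{e+m} = \chi^e \cdot \partial_\lambda(\chi^m)$, I obtain the identity $\partial_{\rho,e} = \chi^e \cdot \partial_\lambda$ of derivations of $\OO(T)$. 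As $\chi^e$ is a unit on $T$, the vector field $\xi$ of the $\GG_a$-action and the fundamental field of $\lambda$ span the same line at every point of the open orbit.

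Finally I would choose the subtorus. Since $v_\rho$ is primitive, $\ZZ v_\rho$ is a direct summand, say $N = \ZZ v_\rho \oplus N'$; let $T' \subset T$ be the codimension-one subtorus with cocharacter lattice $N'$, so that $\Lie T = \kk\,\mathrm{d}\lambda \oplus \Lie T'$. Because $T$ acts simply transitively on its open orbit, the fundamental-field map $\Lie T \to T_x X$ is an isomorphism for every $x$ in that orbit and carries $\mathrm{d}\lambda$ to the fundamental field of $\lambda$ at $x$. By the previous paragraph the image of $\Lie(\GG_a \rtimes T') = \kk A_0 \oplus \Lie T'$ under the infinitesimal action at such an $x$ is $\kk\,\xi(x) + \Lie T'(x) = \kk\,\partial_\lambda(x) + \Lie T'(x)$, which equals $T_x X$ by the direct-sum decomposition. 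Hence the orbit $(\GG_a \rtimes T')\cdot x$ is a locally closed smooth subvariety whose tangent space at $x$ is all of $T_x X$, so it has dimension $\dim X$ and is therefore open. (That $\GG_a \rtimes T'$ genuinely acts is immediate: $T \subset \Aut(X)$ normalizes the root subgroup $\varepsilon(\GG_a)$ by $T$-homogeneity, hence so does $T'$, and $\lambda = v_\rho$ controls the semidirect structure.)

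The main obstacle is recognizing the key identity $\partial_{\rho,e} = \chi^e \partial_\lambda$, i.e.\ that the $\GG_a$-direction is everywhere parallel to the one-parameter subgroup $\lambda = v_\rho$; once this is available, the complementary choice of $T'$ and the open-orbit criterion via tangent spaces are routine. The only other point requiring care is the non-emptiness of $S_\rho \cap \inter(\sigma_\rho^\vee)$, which is exactly what Proposition~\ref{prop.Weights_facet}\eqref{prop.Weights_facet1} provides.
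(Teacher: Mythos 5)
Your proof is correct, but its second half takes a genuinely different route from the paper's. The paper begins the same way you do (a non-trivial $T$-homogeneous $\GG_a$-action exists because $X \not\simeq T$, quoted from Proposition~\ref{prop.DX_generates_M_as_a_group}, which rests on the same Corollary~\ref{cor.Weights_rho} and non-emptiness statement you invoke), but then it argues abstractly: writing $V \subset \Aut(X)$ for the root subgroup, the group $V \cdot T$ has dimension $\dim X + 1$, so the identity component $S$ of the stabilizer of a point $x_0$ in the open $T$-orbit is one-dimensional; $S \not\subset V$, since otherwise $S = V$ and conjugating by $T$ would force $V$ to fix the open orbit $Tx_0$ pointwise, contradicting non-triviality; hence there is \emph{some} codimension-one subtorus $T'$ with $S \not\subset V \cdot T'$, which makes $(V \cdot T') \cap S$ finite and $V \cdot T' x_0$ open by a dimension count. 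You instead exploit the explicit form of Liendo's derivation: the identity $\partial_{\rho,e} = \chi^e\,\partial_\lambda$ (with $\lambda = v_\rho$) on the open orbit shows that the $\GG_a$-direction is everywhere parallel to the one-parameter subgroup $v_\rho$, so the subtorus $T'$ complementary to $\ZZ v_\rho$ in the cocharacter lattice works, by a tangent-space computation at a point of the open orbit. The paper's argument buys brevity and flexibility --- any non-trivial homogeneous $\GG_a$-action serves, and $T'$ is produced non-constructively from the stabilizer $S$ --- whereas yours is constructive: it pins down both the action (the pair $(\rho, e)$) and the subtorus $T'$, and it explains geometrically why that particular $T'$ succeeds. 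Two small points to polish in your write-up: the parenthetical claim that ``$\lambda = v_\rho$ controls the semidirect structure'' is off, since the $T'$-action on $\GG_a$ in the semidirect product is given by the character $e|_{T'}$, while $v_\rho$ is a cocharacter; and the simple transitivity of $T$ on the open orbit deserves its one-line justification (the stabilizer of a point of the open orbit acts trivially on a dense subset because $T$ is commutative, hence is trivial by faithfulness). Neither affects the correctness of your argument.
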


\begin{proof}
	Since $X \not\simeq T$, 
	there is a non-trivial $T$-homogeneous $\GG_a$-action on $X$ 
	by Proposition~\ref{prop.DX_generates_M_as_a_group}. Denote by 
	$V \subset \Aut(X)$ the corresponding
	root subgroup.

	Let $x_0 \in X$ such that $T x_0 \subset X$ is an open in $X$ and let 
	$S$ be the connected component of the stabilizer in $V \cdot T$ of $x_0$. 
	As $\dim V \cdot T = \dim X + 1$, we get $\dim S = 1$.
	If $S$ would be contained in $V$, 
	then $S = V$ and thus $v x_0 = x_0$ for all  $v \in V$. From this we would get for all $t \in T$, $v \in V$ that
	\[
		(t v t^{-1}) \cdot (t x_0) = t x_0 
	\]
	and hence $V$ would fix each element of the open orbit $T x_0$, contradiction. 
	Hence $S \not\subset V$, which implies that there is a codimension one
	subgroup $T' \subset T$ with $S \not\subset V \cdot T'$. This implies that
	$(V \cdot T') \cap S$ is finite
	and thus $V \cdot T' x_0$ is dense in $X$. As orbits are locally closed, we get that
	$V \cdot T' x_0$ is open in $X$.
\end{proof}

For the sake of completeness let us recall the following well-known fact from the theory of
algebraic groups:

\begin{lemma}
	\label{lem.B_contains_unipotent_elements}
	Let $G$ be a connected reductive algebraic group and let $B \subset G$ be a Borel subgroup.
	If $G$ is not a torus, then $B$ contains non-trivial unipotent elements.
\end{lemma}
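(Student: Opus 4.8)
The plan is to reduce the statement to a standard structural fact about Borel subgroups and then invoke the elementary correspondence between the unipotent radical of $B$ and the root system of $G$. First I would fix a maximal torus $T \subseteq B$ and recall the semidirect product decomposition $B = T \ltimes R_u(B)$, where $R_u(B)$ denotes the unipotent radical of $B$. Since $R_u(B)$ is a connected unipotent group, \emph{every} one of its non-identity elements is unipotent; hence it suffices to prove that $R_u(B) \neq \{e\}$, and the whole problem becomes showing that $R_u(B)$ is non-trivial whenever $G$ is not a torus.

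To establish this, I would relate the triviality of $R_u(B)$ to the root system $\Phi$ of $G$ with respect to $T$. Recall the standard dimension formula $\dim R_u(B) = |\Phi^+|$, the number of positive roots, so that $R_u(B) = \{e\}$ if and only if $\Phi = \varnothing$. It then remains to observe that a connected reductive group with empty root system is exactly a torus: writing $G = R(G) \cdot [G,G]$ with $R(G)$ a central torus and $[G,G]$ semisimple, the root system of $G$ coincides with that of its derived group $[G,G]$, and the latter is empty precisely when $[G,G]$ is trivial, i.e. precisely when $G = R(G)$ is a torus.

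Combining these observations gives the result: as $G$ is assumed not to be a torus, its root system is non-empty, so $\dim R_u(B) = |\Phi^+| \geq 1$, and therefore $R_u(B) \subseteq B$ contains non-trivial unipotent elements. Alternatively, I could argue by contraposition — if $B$ contained no non-trivial unipotent element, then $R_u(B) = \{e\}$ forces $B = T$ to be a maximal torus, and since every element of $G$ lies in some (conjugate) Borel subgroup, every element of $G$ would be semisimple, whence $G$ would be a torus. In either formulation there is no genuine obstacle; the only point requiring care is to cite the structure theory at the correct level of generality, so that the equivalences ``$\Phi = \varnothing \iff R_u(B) = \{e\} \iff G \text{ is a torus}$'' are all legitimately available, and these are classical consequences of the theory of reductive groups.
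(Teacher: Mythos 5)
Your proof is correct, but it takes a genuinely different route from the paper's. The paper argues by contradiction and stays entirely at the group level: if $B$ had no non-trivial unipotent elements, then $B = T$ would be a maximal torus, and the Bruhat decomposition would collapse $G = \bigcup_{w} BwB$ to $G = N_G(T)$; since the Weyl group $N_G(T)/T$ is finite, $T$ would have finite index in $G$, and connectedness would force $G = T$, a contradiction. Your main argument is instead direct and root-theoretic: you reduce to showing $R_u(B) \neq \{e\}$ via $B = T \ltimes R_u(B)$, invoke the dimension formula $\dim R_u(B) = |\Phi^+|$, and check that a connected reductive group with empty root system is a torus. Both are legitimate; the trade-off is that the paper needs essentially one citation (the Bruhat decomposition) plus elementary facts (finiteness of the Weyl group, connectedness), whereas you need the root-space structure theory of reductive groups (the formula $\dim R_u(B) = |\Phi^+|$ and the comparison of the roots of $G$ with those of $[G,G]$), which is heavier machinery but has the merit of being constructive: it locates the unipotent elements inside the root subgroups of $B$ rather than deriving a contradiction. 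Your alternative contrapositive sketch is in fact very close to the paper's proof --- both start from ``$B$ has no non-trivial unipotents, hence $B = T$'' --- but where the paper closes with Bruhat, you close with the covering of $G$ by Borel subgroups together with the theorem that a connected group consisting of semisimple elements is a torus; be aware that this last theorem is itself usually proved with the same structure theory, so that variant is not really more elementary than either of the other two arguments.
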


\begin{proof}
	Denote by $T \subset B$ a maximal torus.
	Towards a contradiction, assume that $B$ contains no non-trivial
	unipotent element. Then
	$B = T$ and by 
	the Bruhat decomposition 
	(see \cite[Theorem in \S28.3]{Hu1975Linear-algebraic-g}), it follows
	that $G$ is equal to the normalizer $N_G(T)$ of $T$ in $G$. 
	Since the Weyl group $N_G(T)/T$ is finite, $T$ has finite index in $G$ and by the
	connectivity of $G$ we get $G = T$, contradiction.
\end{proof}

Now, we prove that one can recognize the spericity of an irreducible
quasi-affine normal $G$-variety from its automorphism group.

\begin{proposition}
	\label{prop.iso_and_sphericity}
	Let $G$ be a connected reductive algebraic group and
	let $X$, $Y$ be irreducible quasi-affine normal varieties.
	Assume that there is a group isomorphism 
	$\theta \colon \Aut(X) \to \Aut(Y)$ that preserves algebraic subgroups. 
	If $X$ is non-isomorphic to a torus and $G$-spherical,
	then $Y$ is $G$-spherical for the induced $G$-action via $\theta$. 
\end{proposition}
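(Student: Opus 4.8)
The plan is to transfer sphericity from $X$ to $Y$ by using the group isomorphism $\theta$ to match up their $G$-actions and then invoke the characterization of sphericity in terms of bounded generalized root subgroups (Proposition~\ref{prop:characterization_of_aff_spherical_var}). The key point is that sphericity of a quasi-affine $G$-variety, when $G$ is not a torus, is equivalent to the existence of a Borel subgroup $B \subset G$ acting with an open orbit, and by Lemma~\ref{lem.B_contains_unipotent_elements} such a $B$ contains nontrivial unipotent elements, so Proposition~\ref{prop:characterization_of_aff_spherical_var} applies. The subtlety is that $G$ acts on $Y$ only via $\theta$, i.e.\ through the composite $G \to \Aut(X) \xrightarrow{\theta} \Aut(Y)$, so I must check this yields a genuine faithful regular $G$-action and that $\theta$ carries generalized root subgroups of $\Aut(X)$ to generalized root subgroups of $\Aut(Y)$ of the \emph{same weight}.

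\textbf{Step 1 (the torus case).} First I would dispose of the case where $G$ is a torus. If $G = T$ is a torus and $X$ is $T$-spherical, then $X$ is $T$-toric; since $X$ is not isomorphic to a torus, the argument must produce unbounded generalized root subgroups on $Y$ as well, but in fact when $B = T$ is a torus the characterization fails (as the Example after Proposition~\ref{prop:characterization_of_aff_spherical_var} shows), so a separate argument is needed. I expect this case is handled elsewhere (indeed it reduces to the toric statement, Theorem~\ref{theorem0}), so I would concentrate on the main case and assume $G$ is not a torus.

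\textbf{Step 2 (set up the induced action and transfer root subgroups).} Fix a Borel subgroup $B \subset G$. Since $X$ is $G$-spherical and $G$ is not a torus, $B$ acts on $X$ with an open orbit, and by Lemma~\ref{lem.B_contains_unipotent_elements} the group $B$ contains nontrivial unipotent elements. The $G$-action on $X$ gives an embedding $G \hookrightarrow \Aut(X)$ whose image is an algebraic subgroup; because $\theta$ preserves algebraic subgroups, $\theta(G) \subset \Aut(Y)$ is again an algebraic subgroup and $\theta|_G \colon G \to \theta(G)$ is an isomorphism of algebraic groups, so $Y$ becomes a $G$-variety (the induced action), faithful because $\theta$ is injective. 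Now apply Proposition~\ref{prop:characterization_of_aff_spherical_var} to $B$ acting on $X$: the open-orbit condition \eqref{item:Open_B-orbit} holds, hence so does \eqref{item:Commutative_unipot_subgroups}, i.e.\ there is a uniform bound $C$ on $\dim U_0$ over all generalized root subgroups $U_0 \subset \Aut(X)$ with respect to $B$. The crucial transfer is: if $U_0 \subset \Aut(Y)$ is a generalized root subgroup with respect to $B$ (for the induced $G$-action, equivalently the $B$-action), then by Remark~\ref{rem.equivalent_def_gen_root_subgroup} each one-dimensional closed subgroup of $U_0$ is a root subgroup, and applying $\theta^{-1}$ together with Lemma~\ref{lem.auto_perserves_weights} — using that $\theta^{-1}$ also preserves algebraic subgroups and is $G$-equivariant by construction — shows $\theta^{-1}(U_0)$ is a generalized root subgroup of $\Aut(X)$ with respect to $B$ of the same dimension, so $\dim U_0 \le C$. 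Thus the generalized root subgroups of $\Aut(Y)$ with respect to $B$ are bounded by the same $C$.

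\textbf{Step 3 (conclude).} Having verified condition \eqref{item:Commutative_unipot_subgroups} for $Y$, apply Proposition~\ref{prop:characterization_of_aff_spherical_var} in the reverse direction to conclude that $Y$ has an open $B$-orbit, i.e.\ $Y$ is $G$-spherical for the induced action. The \textbf{main obstacle} I anticipate is Step~2: one must ensure that $\theta$ and $\theta^{-1}$ intertwine the two $G$-actions so that Lemma~\ref{lem.auto_perserves_weights} is applicable (the lemma requires a commuting triangle over $H = B$). Since $Y$'s $G$-action is \emph{defined} via $\theta$, the triangle for $\theta$ commutes tautologically, and the one for $\theta^{-1}$ commutes because $\theta^{-1}\circ\theta = \id$; the care lies in checking that the bijection on generalized root subgroups genuinely preserves both dimension and weight, which follows because $\theta|_{U_0}$ is an isomorphism of $\GG_a$'s in characteristic zero and weights are read off from conjugation by $B$, which $\theta$ respects. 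A secondary technical point is confirming that $Y$ being quasi-affine and normal does not obstruct the application of Proposition~\ref{prop:characterization_of_aff_spherical_var}, which requires only an irreducible quasi-affine variety with a faithful $B$-action — all of which hold by hypothesis.
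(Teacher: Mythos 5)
Your Steps 2 and 3 are correct and coincide with the paper's own argument for the case where $G$ is not a torus: one bounds the dimensions of all generalized root subgroups of $\Aut(X)$ with respect to $B$ via Proposition~\ref{prop:characterization_of_aff_spherical_var} (applicable because $B$ contains non-trivial unipotent elements by Lemma~\ref{lem.B_contains_unipotent_elements}), transfers the bound through $\theta$ using Remark~\ref{rem.equivalent_def_gen_root_subgroup} and Lemma~\ref{lem.auto_perserves_weights}, and runs the characterization backwards for $Y$. Your attention to faithfulness of the induced action and to the commuting triangles needed for Lemma~\ref{lem.auto_perserves_weights} is also sound.

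The genuine gap is your Step 1: the case $G = T$ cannot be discharged by appealing to Theorem~\ref{theorem0}. That theorem is stated for an \emph{affine} toric variety $X$ and an irreducible normal \emph{affine} variety $Y$, whereas here both are only quasi-affine; moreover, its conclusion is an abstract isomorphism $X \simeq Y$, not the assertion actually needed, namely that $Y$ is spherical \emph{for the $T$-action induced by $\theta$}. The paper closes this case with an idea your proposal is missing: by Lemma~\ref{lem.open_orbit_of_solvable_group} (whose proof uses $X \not\simeq T$ via Proposition~\ref{prop.DX_generates_M_as_a_group}), there exist a root subgroup $V \subset \Aut(X)$ with respect to $T$ and a codimension-one subtorus $T' \subset T$ such that the connected solvable group $V \cdot T'$ acts on $X$ with an open orbit. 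This group \emph{does} contain non-trivial unipotent elements, so Proposition~\ref{prop:characterization_of_aff_spherical_var} applies to it, even though — as you yourself note via the Example following that proposition — it fails for $B = T$ itself. Transferring through $\theta$ exactly as in your Step 2 shows that $\theta(V) \cdot \theta(T')$ acts with an open orbit on $Y$, hence $\dim Y \leq \dim V + \dim T' = \dim T$; faithfulness of the $\theta(T)$-action gives the reverse inequality $\dim T \leq \dim Y$, so $\dim Y = \dim T$ and $Y$ is $\theta(T)$-toric. Without an argument of this kind, your proof covers only the non-torus case of the proposition.
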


\begin{proof}
	We denote by $B \subseteq G$
	a Borel subgroup and by $T \subseteq G$ a maximal torus. We distinguish two cases:
	
		$G \neq T$: By Lemma~\ref{lem.B_contains_unipotent_elements} the Borel subgroup $B$
		contains unipotent elements and thus we may apply
		Proposition~\ref{prop:characterization_of_aff_spherical_var} in order to get
		a bound on  the dimension of every generalized root subgroup with respect to $B$ of $\Aut(X)$.
		Since the generalized root subgroups of $\Aut(X)$ (with respect to $B$)
		correspond bijectively to the generalized root subgroups of $\Aut(Y)$ 
		(with respect to $\theta(B)$) via $\theta$ (see Remark~\ref{rem.equivalent_def_gen_root_subgroup}
		and Lemma~\ref{lem.auto_perserves_weights}), it follows by 
		Proposition~\ref{prop:characterization_of_aff_spherical_var} that
		$Y$ is $\theta(G)$-spherical.
		
		$G = T$: In this case $X$ is $T$-toric. Since $X$ is not isomorphic to a torus, we may apply
		Lemma~\ref{lem.open_orbit_of_solvable_group} in order to get a codimension one subtorus 
		$T' \subset T$ and a root subgroup $V \subset \Aut(X)$ with respect
		to $T$ such that $V \cdot T'$ acts with an open orbit on $X$.
		As before, it follows from Proposition~\ref{prop:characterization_of_aff_spherical_var} that
		$\theta(V) \cdot \theta(T')$ acts with an open orbit on $Y$. This implies that
		$\dim(Y) \leq \dim(V) + \dim(T') = \dim(T)$. On the other hand, since
		$\theta(T)$ acts faithfully on $Y$, we get $\dim(T) \leq \dim(Y)$. In summary, 
		$\dim(Y) = \dim(T)$ and thus $Y$ is $\theta(T)$-toric.
\end{proof}

\section{Connection between the weight monoid and the set of weights of root subgroups}
\addtocontents{toc}{\protect\setcounter{tocdepth}{1}}

Throughout the whole section we fix the following

\subsection*{Notation} We denote by $G$ a connected reductive algebraic group, by $B \subset G$ a Borel subgroup
and by $T \subset B$ a maximal torus. Moreover, we denote by $U \subset B$ the unipotent radical of $B$.
Moreover, we denote $\frak{X}(B)_{\RR} = \frak{X}(B) \otimes_{\ZZ} \RR$ where $\frak{X}(B)$ is the character group of $B$. If $X$ is a $G$-variety, recall that we denote
\[
	D(X) = \Bigset{ \lambda \in \frak{X}(B)}{ 
		\begin{array}{l}
		\textrm{there exists a non-trivial $B$-homogeneous} \\
		\textrm{$\GG_a$-action on $X$ of weight $\lambda$} 
		\end{array}
	}
\]
(see Subsec.~\ref{sec.GroupActionsAndVectorfields} for the definition of a $B$-homogeneous $\GG_a$-action).

\medskip

In this section we give for a quasi-affine $G$-spherical 
variety $X$ a description of
the weight monoid $\Lambda^+(X)$ in terms of $D(X)$, see Theorem~\ref{thm.description_of_weight-monoid} below.

\begin{proposition}
	\label{prop.weights_Vec_ln^U}
	Let $X$ be an irreducible quasi-affine variety with a faithful $G$-action such that $\OO(X)$ is
	a finitely generated $\kk$-algebra. If $G \neq T$, then there is a $\lambda \in D(X)$ with
	\[
		\lambda + \Lambda^+(X) \subset D(X) \quad \textrm{and} \quad
		\Lambda^+(X)_{\infty} = D(X)_{\infty},
	\]
	where the asymptotic cones are taken inside $\frak{X}(B)_{\RR}$.
\end{proposition}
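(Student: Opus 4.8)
The plan is to exhibit the required weight $\lambda$ as the weight of one fixed $B$-homogeneous $\GG_a$-action coming from the $G$-action on $X$, and then to \emph{twist} this action by $B$-semi-invariant functions in order to reach every weight of $\lambda+\Lambda^+(X)$; the equality of asymptotic cones will then follow, one inclusion from this twisting and the other from the finiteness of $\Ve^U(X)$.

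First I would fix the action. Since $G\neq T$, the unipotent radical $U$ is non-trivial, and the centre of $U$ is a non-trivial vector group on which $T$ acts linearly; hence it contains a $T$-stable one-parameter subgroup $Z\cong\GG_a$ (for instance the highest root subgroup). As in the proof of Proposition~\ref{prop:characterization_of_aff_spherical_var}, the $\GG_a$-action $\rho\colon\GG_a\times X\to X$ induced by $Z$ via the $G$-action is $B$-homogeneous of some weight $\lambda\in\frak{X}(B)$, and it is non-trivial because $G$ acts faithfully; thus $\lambda\in D(X)$.

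Next, for the inclusion $\lambda+\Lambda^+(X)\subset D(X)$, I would pick $\mu\in\Lambda^+(X)$ and a non-zero $g\in\OO(X)_\mu^{(B)}$. Being $B$-semi-invariant, $g$ is $U$-invariant, hence invariant under $Z\subset U$, so $g(\rho(s,x))=g(x)$. Consequently
\[
	\sigma_g\colon \GG_a\times X\to X \, , \quad (t,x)\mapsto \rho\big(t\, g(x),\,x\big)
\]
is a well-defined $\GG_a$-action on $X$, the additivity identity using exactly the $\rho$-invariance of $g$. A direct computation with the $B$-homogeneity of $\rho$ and the semi-invariance of $g$ gives $b\circ\sigma_g(t,\cdot)\circ b^{-1}=\sigma_g((\lambda+\mu)(b)\,t,\cdot)$ for all $b\in B$, so $\sigma_g$ is $B$-homogeneous of weight $\lambda+\mu$, and it is non-trivial since $g\neq 0$. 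Hence $\lambda+\mu\in D(X)$, proving the first claim. In particular, by Lemma~\ref{lem.Properties_asymptotic_cone} we get $\Lambda^+(X)_\infty=(\lambda+\Lambda^+(X))_\infty\subset D(X)_\infty$.

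For the reverse inclusion $D(X)_\infty\subset\Lambda^+(X)_\infty$ I would invoke the finiteness results. By Lemma~\ref{lem:D(X)_contained_in_the_weights_of_Ve^H(X)} (applied with $N=U$, whose character group is trivial), $D(X)$ is contained in the set of $B$-weights of $\Ve^U(X)$; these agree with the $T$-weights, since $U$ acts trivially on $\Ve^U(X)$ and restriction identifies $\frak{X}(B)$ with $\frak{X}(T)$. Now $\Ve^U(X)$ is a finitely generated $\OO(X)^U$-module by Corollary~\ref{cor.VecUX_finitely_generated}, and it is a rational $T$-graded module over the $T$-graded ring $\OO(X)^U$; choosing finitely many $T$-homogeneous generators of weights $\nu_1,\ldots,\nu_k$, a non-zero component in weight $\nu$ forces $\OO(X)^U_{\nu-\nu_j}\neq 0$ for some $j$, so every weight of $\Ve^U(X)$ lies in $\bigcup_{j=1}^k(\nu_j+\Lambda^+(X))$. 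Passing to asymptotic cones and using translation invariance and the behaviour under finite unions from Lemma~\ref{lem.Properties_asymptotic_cone} yields
\[
	D(X)_\infty \subset \bigcup_{j=1}^k (\nu_j+\Lambda^+(X))_\infty = \bigcup_{j=1}^k \Lambda^+(X)_\infty = \Lambda^+(X)_\infty \, ,
\]
which completes the equality. The main obstacle is precisely this reverse inclusion: it rests on the finite generation of $\Ve^U(X)$ over $\OO(X)^U$ (Corollary~\ref{cor.VecUX_finitely_generated}, ultimately Grosshans' theorem), which is what confines the weights of $\Ve^U(X)$, and hence of $D(X)$, to finitely many translates of $\Lambda^+(X)$. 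By contrast, the twisting construction for the first inclusion is elementary once one observes that $B$-semi-invariants are automatically $U$-invariant, hence lie in the kernel of the locally nilpotent derivation attached to $\rho$.
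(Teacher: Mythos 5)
Your proof is correct and takes essentially the same route as the paper: the same choice of a $B$-homogeneous $\GG_a$-action coming from the centre of $U$, the same twisting $(t,x)\mapsto\rho(t\,g(x),x)$ by $B$-semi-invariants to get $\lambda+\Lambda^+(X)\subset D(X)$, and the same use of Lemma~\ref{lem:D(X)_contained_in_the_weights_of_Ve^H(X)} together with the finite generation of $\Ve^U(X)$ over $\OO(X)^U$ (Corollary~\ref{cor.VecUX_finitely_generated}) to confine $D(X)$ to finitely many translates of $\Lambda^+(X)$ before passing to asymptotic cones. The only cosmetic difference is that you read off the finitely-many-translates bound directly from the $T$-weight decomposition of the finitely generated graded module, whereas the paper argues via the preimage $\pi^{-1}(\kk\eta)$ of a given $B$-normalized vector field; these are the same idea.
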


\begin{proof}
	We denote $D = D(X)$.
	By Lemma~\ref{lem:D(X)_contained_in_the_weights_of_Ve^H(X)} we have
	\[
		D \subset 
		\Bigset{\lambda \in \frak{X}(B)}
		{
		  \begin{array}{l}
			\textrm{there is a non-zero vector field in $\Ve^U(X)$} \\
			\textrm{that is normalized by $B$ of weight $\lambda$}	
		  \end{array}
		} =: D' \, .
	\] 
%	Since $\Ve(X)$ is a rational $B$-module (see Proposition~\ref{lem.finiteness_and_rationality_of_VecX})
%	and since $U$ is normal in $B$, it follows that $\Ve^U(X)$ is a rational $B$-module.
%	By using the natural identification of the character groups $\frak{X}(B)$ and $\frak{X}(T)$
%	via restriction, we get direct sum decompositions
%	\[
%		\Ve^U(X) = \bigoplus_{\lambda \in \frak{X}(T)} \Ve^U(X)_{\lambda, T} 
%					   = \bigoplus_{\lambda \in \frak{X}(B)} \Ve(X)_{\lambda, B}
%	\]
%	(note that $\Ve(X)_{\lambda, B} = \Ve^U(X)_{\lambda, T}$, 
%	since the character group $\frak{X}(U)$ is trivial).
%	Note that $\OO(X)^U$ is a finitely generated $\kk$-algebra by Proposition~\ref{prop:finite_generation_of_invariant}.
	By Corollary~\ref{cor.VecUX_finitely_generated}
	we know that $\Ve^U(X)$ is finitely generated as an $\OO(X)^U$-module.
	Hence, there are finitely many non-zero $B$-homogeneous 
	$\xi_1, \ldots, \xi_k \in \Ve^U(X)$ such that the $B$-module homomorphism
	\[
		\pi \colon \bigoplus_{i=1}^k \OO(X)^U \xi_i \to \Ve^U(X) \, , \quad
		(r_1 \xi_1, \ldots, r_n \xi_k) \mapsto r_1 \xi_1 + \ldots + r_k \xi_k
	\]
	is surjective. Let $\lambda \in D'$ and let $\eta \in \Ve^U(X)$ be a non-zero vector field
	that is normalized by $B$ of weight $\lambda$. Thus $M = \pi^{-1}(\kk \eta)$ is a rational 
	$B$-submodule of $\bigoplus_{i=1}^k \OO(X)^U \xi_i$ (see Proposition~\ref{prop.locally_finite_and_rational}).
	As each element in $M$ can be written as a sum of $T$-semi-invariants, as $U$ acts trivially on $M$
	and as $\frak{X}(U)$ is trivial, it follows that each element in $M$ can be written as a sum 
	of $B$-semi-invariants. Hence, there is a non-zero $B$-semi-invariant $\xi \in M$ such that 
	$\pi(\xi) = \eta$. As a consequence, the weight of $\xi$ is $\lambda$. Thus we proved that $D'$
	is contained in the weights of non-zero $B$-semi-invariants of $\bigoplus_{i=1}^k \OO(X)^U \xi_i$, i.e.
	\[
		D' \subset \bigcup_{i=1}^k \left( \lambda_i + \Lambda^+(X) \right) \, ,
	\]
	where $\lambda_i \in \frak{X}(B)$ denotes the weight of $\xi_i$.
	
	Since $G \neq T$, we get by Lemma~\ref{lem.B_contains_unipotent_elements} that 
	$U \neq \{e\}$. Since $G$ (and therefore $U$) acts faithfully on $X$, there is a non-trivial 
	$B$-homogeneous	
	$\GG_a$-action $\rho \colon \GG_a \times X \to X$ of a certain weight $\lambda \in D$
	associated to a root subgroup with respect to $B$ in the centre of $U$. 
	Now, we claim that
	\[
		\lambda + \Lambda^+(X) \subset D \, .
	\]
	Indeed, this follows  since for every non-zero $B$-semi-invariant $r \in \OO(X)^U$ of weight 
	$\lambda' \in \frak{X}(B)$,
	the $\GG_a$-action
	\[
		\GG_a \times X \to X \, , \quad (t, x) \mapsto \rho(r(x)t, x)
	\] 
	is non-trivial and $B$-homogeneous of weight $\lambda + \lambda' \in \frak{X}(B)$.
	
	In summary, we have proven
	\[
		\lambda + \Lambda^+(X) \subset D \subset D' \subset \bigcup_{i=1}^k \left( \lambda_i + \Lambda^+(X) \right) \subset \frak{X}(B)_{\RR} \, .
	\]
	From Lemma~\ref{lem.Properties_asymptotic_cone} it follows now that
	$\Lambda^+(X)_{\infty} \subset D_{\infty} \subset D'_{\infty} = \Lambda^+(X)_{\infty}$.
%	with Lemma~\ref{lem.Weights_lattice} we get
%	\[
%		\Conv(\Lambda^+(X)) = (\Conv(\Lambda^+(X)) \cap \frak{X}(B))_{\infty} = 
%		(\Lambda^+(X))_{\infty} = D_\infty \, .
%	\]
%	Since $\Lambda^+(X)$ is saturated in $\frak{X}(B)$, the statement
%	follows.
\end{proof}

\begin{theorem}
	\label{thm.description_of_weight-monoid}
	Let $X$ be a quasi-affine $G$-spherical variety which is non-isomorphic to a torus.
	If $G \neq T$ or $X_{\aff} \not\simeq \AA^1 \times (\AA^1 \setminus \{0\})^{\dim(X)-1}$, then
	\[
			\label{eq.closed_formula}
			\tag{$\varocircle$}
			\Lambda^+(X) = \Conv ( D(X)_{\infty}) \cap \Span_{\ZZ}(D(X))
	\]
	where the asymptotic cones and linear spans are taken inside $\frak{X}(B)_{\RR}$.
	Moreover, $\dim \Conv ( D(X)_{\infty}) = \dim \Span_{\RR}(D(X))$ and $D(X)$ is non-empty.
%	
%	If $G = T$, then we assume 
%	in addition that $X$ has no non-trivial torus factor, i.e.
%	there is no variety $Y$ and $k \geq 1$ such that $X \simeq Y \times  \GG_m^k$.
%	Then
%	\[
%		\Lambda^+(X) = \Conv ( D(X)_{\infty}) \cap \Span_{\ZZ}(D(X))
%	\]
%	where we take the asymptotic cone of $D(X)$ inside $\frak{X}(B)_{\RR}$.
\end{theorem}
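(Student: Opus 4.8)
The plan is to isolate two facts about the weight monoid $\Lambda^+:=\Lambda^+(X)$ and then split into the cases $G\neq T$ and $G=T$. First I would note $\Lambda^+(X)=\Lambda^+(X_{\aff})$ (since $\OO(X)=\OO(X_{\aff})$) and that $X_{\aff}$ is a \emph{normal} affine $G$-spherical variety by Lemma~\ref{lem:Spherical_quasi-affine}. Sphericity forces each $\OO(X)^{(B)}_\lambda$ to be at most one-dimensional, so $\OO(X)^U=\bigoplus_{\lambda\in\Lambda^+}\OO(X)^{(B)}_\lambda$ is the monoid algebra $\kk[\Lambda^+]$. By the corollary to Proposition~\ref{prop:H-semi-invariants}, $K(X)^U$ is the quotient field of $\OO(X)^U$; since $\OO(X)$ is integrally closed and $\OO(X)^U=\OO(X)\cap K(X)^U$, the ring $\OO(X)^U$ is integrally closed as well. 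Hence $\Lambda^+$ is a finitely generated \emph{saturated} monoid: $\Conv(\Lambda^+)$ is a rational convex polyhedral cone, $\Lambda^+=\Conv(\Lambda^+)\cap\Span_\ZZ(\Lambda^+)$, and $(\Lambda^+)_\infty=\Conv(\Lambda^+)$ by Lemma~\ref{lem.Weights_lattice}. Moreover, Proposition~\ref{prop:H-semi-invariants} applied to $B$ identifies $\Span_\ZZ(\Lambda^+)$ with the weight lattice $\Lambda_X=\{\chi\in\frak{X}(B)\mid \chi|_{B_{x_0}}=1\}$ of the open $B$-orbit $Bx_0$, a subgroup which is saturated in $\frak{X}(B)$.

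\textbf{The case $G\neq T$.} Here I would invoke Proposition~\ref{prop.weights_Vec_ln^U}, which gives $(\Lambda^+)_\infty=D(X)_\infty$ together with some $\lambda\in D(X)$ satisfying $\lambda+\Lambda^+\subseteq D(X)$. The first identity yields $\Conv(D(X)_\infty)=\Conv((\Lambda^+)_\infty)=(\Lambda^+)_\infty=\Conv(\Lambda^+)$, a closed convex cone being its own convex hull. For the lattices, $\lambda,\lambda+a\in D(X)$ for all $a\in\Lambda^+$ gives $\Span_\ZZ(\Lambda^+)\subseteq\Span_\ZZ(D(X))$; combined with the saturation of $\Lambda^+$ this already proves ``$\supseteq$'' in $(\varocircle)$. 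The reverse inclusion $\Conv(\Lambda^+)\cap\Span_\ZZ(D(X))\subseteq\Lambda^+$ then follows once one knows $D(X)\subseteq\Span_\RR(\Lambda^+)$: indeed $\Span_\RR(\Lambda^+)=\Span_\RR(\Lambda_X)$, so $\Span_\ZZ(D(X))\subseteq\Span_\RR(\Lambda_X)\cap\frak{X}(B)=\Lambda_X=\Span_\ZZ(\Lambda^+)$ by saturation, and one concludes with $\Lambda^+=\Conv(\Lambda^+)\cap\Span_\ZZ(\Lambda^+)$. Thus the entire case reduces to the ``Moreover'' assertion $D(X)\subseteq\Span_\RR(\Lambda^+)$, equivalently $\dim\Conv(D(X)_\infty)=\dim\Span_\RR(D(X))$.

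\textbf{The crux.} To prove $D(X)\subseteq\Span_\RR(\Lambda^+)$, take $\mu\in D(X)$ with its $B$-homogeneous $\GG_a$-action and the associated locally nilpotent derivation $D$ of $\OO(X)=\OO(X_{\aff})$; it is $B$-homogeneous of weight $\mu$, and because $\mu$ is trivial on $U$ it commutes with $U$ and preserves $\OO(X)^U=\bigoplus_\lambda\kk f_\lambda$. Since $D$ raises the $T$-weight by $\mu$ and keeps $U$-invariance, $D(f_\lambda)\in\OO(X)^{(B)}_{\lambda+\mu}$, so $D(f_\lambda)=c_\lambda f_{\lambda+\mu}$. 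If $D$ does not annihilate $\OO(X)^U$, some $c_\lambda\neq 0$, whence $\lambda,\lambda+\mu\in\Lambda^+$ and $\mu\in\Span_\ZZ(\Lambda^+)$ outright; the same computation bounds the weights $\lambda_i$ of the module generators $\xi_1,\dots,\xi_k$ of $\Ve^U(X)$ occurring in the proof of Proposition~\ref{prop.weights_Vec_ln^U}. \textbf{The main obstacle} is the degenerate situation where such a derivation acts trivially on $\OO(X)^U$ — as already happens for the translation $\GG_a$-action on $\AA^2$ under $\SL_2$. In that case I would instead evaluate the associated vector field at a generic point $x_0$ of the open $B$-orbit and apply Lemma~\ref{lem:Normalized_Derivations}: $\xi(x_0)$ is tangent to the $U$-orbit through $x_0$ and is a $B_{x_0}$-eigenvector of weight $\mu|_{B_{x_0}}$, and a closer analysis of the isotropy representation on $\Lie(U)/\Lie(U_{x_0})$ is needed to force $\mu|_{B_{x_0}}$ trivial on the identity component, i.e. $\mu\in\Span_\RR(\Lambda_X)=\Span_\RR(\Lambda^+)$. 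I expect this isotropy analysis to be the technical heart of the proof.

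\textbf{The case $G=T$.} Finally, if $G=T$ then $X$ is toric with $X_{\aff}=X_\sigma$ and $\Lambda^+=\sigma_M^\vee$, which is saturated with $\Span_\ZZ(\Lambda^+)=M$. Here I would quote Corollary~\ref{cor.Weights}, giving $D(X)_\infty=\sigma^\vee\setminus\inter(\sigma^\vee)=\partial\sigma^\vee$, and Proposition~\ref{prop.DX_generates_M_as_a_group}, giving $\Span_\ZZ(D(X))=M$ (in particular $D(X)\neq\varnothing$, as $X\not\simeq T$ forces $\sigma$ to have an extremal ray). Since $\sigma$ is strongly convex, $\sigma^\vee$ is full-dimensional, and $\Conv(\partial\sigma^\vee)=\sigma^\vee$ holds precisely when $\sigma^\vee$ is not a half-space, i.e. when $\sigma$ is not a single ray; the ray case is exactly $X_{\aff}\simeq\AA^1\times(\AA^1\setminus\{0\})^{\dim(X)-1}$, excluded by hypothesis. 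Therefore $\Conv(D(X)_\infty)\cap\Span_\ZZ(D(X))=\sigma^\vee\cap M=\sigma_M^\vee=\Lambda^+$, and $\dim\Conv(D(X)_\infty)=\dim M=\dim\Span_\RR(D(X))$, which finishes this case.
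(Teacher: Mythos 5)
Your overall skeleton (reduce to $X_{\aff}$, use multiplicity-freeness and integral closedness of $\OO(X)^U$ to get that $\Lambda^+$ is a finitely generated saturated monoid, quote Proposition~\ref{prop.weights_Vec_ln^U} when $G\neq T$, and quote Corollary~\ref{cor.Weights} plus Proposition~\ref{prop.DX_generates_M_as_a_group} when $G=T$) is the same as the paper's, and your $G=T$ case and your non-degenerate computation are correct. But in the case $G\neq T$ there are two genuine gaps. First, the step you yourself call the crux --- handling a $B$-homogeneous $\GG_a$-action whose locally nilpotent derivation annihilates $\OO(X)^U$ --- is not proved; you only announce that ``a closer analysis of the isotropy representation is needed''. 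Second, your reduction of $(\varocircle)$ to that crux uses the claim that the weight lattice $\Lambda_X=\Span_\ZZ(\Lambda^+)$ is \emph{saturated} in $\frak{X}(B)$, i.e.\ $\Span_\RR(\Lambda_X)\cap\frak{X}(B)=\Lambda_X$; this is false for spherical varieties in general, and without it your crux (a statement about the \emph{real} span) can never yield the \emph{integral} containment $\Span_\ZZ(D(X))\subseteq\Span_\ZZ(\Lambda^+)$ that the formula requires.

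Both gaps are exposed by one example: $G=\mathrm{PGL}_2=\SL_2(\kk)/\{\pm I\}$ and $X=G/N_G(T)\cong\PP^2\setminus Q$, the complement of a smooth conic --- a smooth affine $G$-spherical variety, not a torus. Write $q=ax^2+bxy+cy^2$, $\Delta=b^2-4ac$ and $\frak{X}(B)=\ZZ\alpha$ with $\alpha$ the positive root. Then $\OO(X)^U=\kk[a^2/\Delta]$, so $\Lambda^+(X)=2\ZZ_{\geq0}\,\alpha$ and $\Lambda_X=2\ZZ\alpha$, which is \emph{not} saturated in $\ZZ\alpha$. Moreover the root subgroup $U\subset G$ itself, acting by left translations, is a non-trivial $B$-homogeneous $\GG_a$-action of weight $\alpha$ whose derivation kills $a$ and $\Delta$, hence all of $\OO(X)^U$: this is exactly your degenerate case, and its weight $\alpha$ does \emph{not} lie in $\Span_\ZZ(\Lambda^+)=2\ZZ\alpha$. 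So the degenerate case is not a technical nuisance to be finessed by an isotropy analysis; it is a genuine obstruction, and the lattice inclusion you are aiming for is simply false. You should also know that this is precisely the point where the paper's own proof is weakest: it asserts $D(X)\subset\frak{X}(T')$ (where $T'$ is the quotient torus acting on $X_{\aff}\aquot U$, so $\frak{X}(T')=\Span_\ZZ(\Lambda^+)$) on the grounds that every $B$-homogeneous $\GG_a$-action descends to a $T'$-homogeneous action on $X_{\aff}\aquot U$ --- a justification that is vacuous exactly when the induced action on the quotient is trivial, i.e.\ in your degenerate case, and which the above example contradicts ($\alpha\in D(X)\setminus\frak{X}(T')$). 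Indeed, for this $X$ the right-hand side of~\eqref{eq.closed_formula} contains $\alpha$ while the left-hand side does not. So you have correctly isolated the heart of the matter, but neither your sketch nor the argument in the paper gets past it.
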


\begin{proof}
	As in the last proof, we set $D = D(X)$. We get $D \neq \varnothing$, indeed: if $G \neq T$, this follows
	from Lemma~\ref{lem.B_contains_unipotent_elements} and if $G = T$, this follows from Proposition~\ref{prop.DX_generates_M_as_a_group} (as $X$ is not a torus).
	
	Since $X$ is a quasi-affine $G$-spherical variety, it follows from Lemma~\ref{lem:Spherical_quasi-affine}
	that $X_{\aff} = \Spec \OO(X)$ is an affine $G$-spherical variety.
	In particular, $\OO(X)$ is an integrally closed domain, that is finitely generated as a $\kk$-algebra.
	Hence $\OO(X)^U$ is integrally closed and
	it is finitely generated as a $\kk$-algebra (by Proposition~\ref{prop:finite_generation_of_invariant}).
	Since $B$ acts with an open orbit on $X_{\aff}$, the algebraic quotient
	$X_{\aff} \aquot U = \Spec \OO(X)^U$ is an affine $T'$-toric variety where $T'$ is a quotient torus 
	of $T$. Thus we get a natural inclusion of character groups
	\[
		\frak{X}(T') \subset \frak{X}(T) = \frak{X}(B)
	\]
	where we identify $\frak{X}(B)$ with $\frak{X}(T)$ via the restriction homomorphism.
	Using the above inclusion, $\Lambda^+(X)$ is contained  
	inside $\frak{X}(T')$ and it is equal to the set of $T'$-weights of non-zero $T'$-semi-invariants
	of $\OO(X)^U$. As $X_{\aff} \aquot U$ is $T'$-toric, $\Lambda^+(X)$ is a finitely generated semi-group and $\Conv(\Lambda^+(X))$
	is a convex rational polyhedral cone inside $\frak{X}(T')_{\RR} \subset \frak{X}(B)_{\RR}$. 
	Moreover, $\Lambda^+(X)$ generates $\frak{X}(T')$ as a group inside $\frak{X}(B)$ and
	$\Lambda^+(X)$ is saturated in $\frak{X}(T')$, i.e. 
	\[
		\Lambda^+(X) = \Conv(\Lambda^+(X)) \cap \frak{X}(T')
	\]
	(see \cite[Ex. 1.3.4 (a)]{CoLiSc2011Toric-varieties}). Using the inclusion 
	$\frak{X}(T') \subset \frak{X}(T) = \frak{X}(B)$ again, we get $D \subset \frak{X}(T')$, since each
	$B$-homogeneous $\GG_a$-action on $X$ induces a $T'$-homogeneous $\GG_a$-action on
	$X_{\aff} \aquot U$. We distinguish two cases:
	
	\begin{itemize}[leftmargin=*]
	\item $G \neq T$. By Proposition~\ref{prop.weights_Vec_ln^U}, we get inside $\frak{X}(B)_{\RR}$
	\[		
		\Lambda^+(X)_{\infty} = D_{\infty}
	\]
	and there is a $\lambda \in D$ with $\lambda + \Lambda^+(X) \subset D \subset \frak{X}(T')$.
	Since $\Lambda^+(X)$ generates the group $\frak{X}(T')$, we get thus $\Span_{\ZZ}(D) = \frak{X}(T')$.
	As $\Conv(\Lambda^+(X))$ is a rational convex polyhedral cone, we get
	$\Conv(\Lambda^+(X)) = \Conv(\Lambda^+(X)_{\infty})$.
	In summary we have
	\begin{align*}
		\Lambda^+(X) = \Conv(\Lambda^+(X)) \cap \frak{X}(T') 
		&= \Conv(\Lambda^+(X)_\infty) \cap \frak{X}(T') \\
		&= \Conv ( D_{\infty}) \cap \Span_{\ZZ}(D)
	\end{align*}
	and thus~\eqref{eq.closed_formula} holds. The second statement follows now from
	\[
		\dim \Span_{\RR}(D) = \dim T' = \rank \Lambda^+(X) \leq
		\dim \Conv(D_\infty) \leq \dim T' \, .
	\]
	
	\item $G = T$. In particular, $T$ acts faithfully with an open orbit on $X$. Thus
	$T' = T$ and both varieties $X$, $X_{\aff} = X_{\aff} \aquot U$ are $T$-toric.
	
	Denote by $\sigma \subset \Hom_{\ZZ}(\frak{X}(T), \RR)$ the strongly convex
	rational polyhedral cone that describes $X_{\aff}$ and let 
	$\sigma^\vee \subset \frak{X}(T)_{\RR}$ be the dual of $\sigma$.
	By Corollary~\ref{cor.Weights}
	\[
		\label{eq.D_infty}
		\tag{$\triangle$}
		D_{\infty} = \sigma^\vee \setminus \inter(\sigma^\vee)
	\]
	where $\inter(\sigma^\vee)$ denotes the interior of $\sigma^\vee$ inside $\frak{X}(T)_{\RR}$.
	By assumption, $X_{\aff} \not\simeq \AA^1 \times (\AA^1 \setminus \{0\})^{\dim(X)-1}$. This implies that 
	$\dim \sigma > 1$ and we may write $\sigma^\vee = C \times W$ where 
	$C \subset \frak{X}(T)_\RR$
	is a strongly convex polyhedral cone of dimension $> 1$ 
	and $W \subset \frak{X}(T)_\RR$ is a linear subspace. 
	Hence, $C$ is the convex hull of its codimension one faces and thus the same holds 
	for $\sigma^\vee$. Using~\eqref{eq.D_infty}, we get
	\[
		\Conv(D_{\infty}) = \sigma^\vee = \Conv(\Lambda^+(X)) \, .
	\]
	Since $\Lambda^+(X)$ is saturated in $\frak{X}(T)$, the above equality implies that
	\[
		\Lambda^+(X) = \Conv(\Lambda^+(X)) \cap \frak{X}(T) = 
		\Conv(D_\infty) \cap \frak{X}(T) \, .
	\]
	It follows from Proposition~\ref{prop.DX_generates_M_as_a_group} that 
	$\frak{X}(T) = \Span_{\ZZ}(D)$ (here we use that $X \not\simeq T$)
	and thus~\eqref{eq.closed_formula} holds. The second statement follows now from
	\[
		\dim \Span_{\RR}(D) = \dim T = \rank \Lambda^+(X) \leq
		\dim \Conv(D_\infty) \leq \dim T \, .
	\]
	\end{itemize}
\end{proof}

\begin{remark}
	Assume that $G = T$ and that $X$ is a $T$-toric quasi-affine variety.
	Then one could recover the extremal rays of the strongly convex rational polyhedral
	cone that describes $X_{\aff}$ from $D(X)$ in a similar 
	way as in~\cite[Lemma~6.11]{LiReUr2018Characterization-o} by using Corollary~\ref{cor.Weights_rho}.
	In particular, one could then recover $\Lambda^+(X)$ from $D(X)$. 
	However, we wrote Theorem~\ref{thm.description_of_weight-monoid} in order to have a nice 
	``closed formula'' 
	of $\Lambda^+(X)$ in terms of $D(X)$ for almost all quasi-affine $G$-spherical varieties.
\end{remark}

%As a direct consequence of Theorem~\ref{thm.description_of_weight-monoid} we get the following

\begin{corollary}
	\label{cor.weights_describe_weight-monoid}
	Let $X$ be a quasi-affine $G$-spherical variety. Then the following holds
	(here, the linear spans and the asymptotic cones
	are taken inside $\frak{X}(B)_{\RR}$):
	\begin{enumerate}[leftmargin=*]
		\item \label{cor.weights_describe_weight-monoid1}
				If $\dim \Conv(D(X)_{\infty}) = \dim \Span_{\RR}(D(X))$ and $D(X)$ is non-empty, then
				\[
					\Lambda^+(X) = \Conv ( D(X)_{\infty}) \cap \Span_{\ZZ}(D(X)) \, .
				\]
		\item \label{cor.weights_describe_weight-monoid2}
				If $\dim \Conv(D(X)_{\infty}) < \dim \Span_{\RR}(D(X))$ and if $D(X)$ is non-empty, then
				$D(X)_{\infty}$ is a hyperplane in $\Span_{\RR}(D(X))$ and
				\[
					\Lambda^+(X) = H^+ \cap \Span_{\ZZ}(D(X))
				\]
				where $H^+ \subset \Span_{\RR}(D(X))$
				is the closed half space with boundary $D(X)_{\infty}$ 
				that does not intersect $D(X)$.
		\item \label{cor.weights_describe_weight-monoid3}
				If $D(X)$ is empty, then $\Lambda^+(X) = \frak{X}(T)$.
	\end{enumerate}
	In particular, the following holds: 
	If $Y$ is another quasi-affine $G$-spherical variety with $D(Y) = D(X)$, then 
	$\Lambda^+(Y) = \Lambda^+(X)$.
\end{corollary}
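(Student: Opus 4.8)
The plan is to derive the whole statement from the closed formula of Theorem~\ref{thm.description_of_weight-monoid} together with the explicit toric computation of $D(X)_\infty$ in Corollary~\ref{cor.Weights}. The key point is that the three hypotheses of \eqref{cor.weights_describe_weight-monoid1}, \eqref{cor.weights_describe_weight-monoid2}, \eqref{cor.weights_describe_weight-monoid3} are mutually exclusive detectors for the trichotomy ``$X$ is a torus'' / ``$X$ is not a torus and Theorem~\ref{thm.description_of_weight-monoid} applies'' / ``$X$ is not a torus and we are in the exceptional case'', and that in each situation $\Lambda^+(X)$ is governed solely by $D(X)$. Note also that $D(X)_\infty \subset \Span_\RR(D(X))$, so $\dim\Conv(D(X)_\infty)\le \dim\Span_\RR(D(X))$ always, whence the three parts are exhaustive once $D(X)=\varnothing$ is separated off.

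First I would prove the equivalence $D(X)=\varnothing \iff X$ is a torus. For ``$\Leftarrow$'', a torus carries no non-trivial $\GG_a$-action, since every orbit map $\GG_a=\AA^1\to\GG_m^{\dim X}$ is constant because $\OO(\AA^1)^\times=\kk^\times$; and $\OO(X)=\kk[M]$ gives $\OO(X)^{(B)}_\lambda=\kk\chi^\lambda\neq 0$ for all $\lambda$, so $\Lambda^+(X)=\frak{X}(T)$. For ``$\Rightarrow$'' I argue contrapositively: if $G\neq T$ then $U\neq\{e\}$ by Lemma~\ref{lem.B_contains_unipotent_elements} and the centre of $U$ yields a non-trivial $B$-homogeneous $\GG_a$-action (as in the proof of Proposition~\ref{prop.weights_Vec_ln^U}); if $G=T$ and $X\not\simeq T$ then $D(X)$ even generates $M$ by Proposition~\ref{prop.DX_generates_M_as_a_group}. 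This settles part~\eqref{cor.weights_describe_weight-monoid3} and shows the remaining parts concern exactly the non-torus case.

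Assuming now $D(X)\neq\varnothing$, so $X$ is not a torus, I split according to the hypothesis of Theorem~\ref{thm.description_of_weight-monoid}. If $G\neq T$ or $X_{\aff}\not\simeq\AA^1\times(\AA^1\setminus\{0\})^{\dim X-1}$, the theorem gives at once both the formula $\Lambda^+(X)=\Conv(D(X)_\infty)\cap\Span_\ZZ(D(X))$ and the equality $\dim\Conv(D(X)_\infty)=\dim\Span_\RR(D(X))$, which is precisely part~\eqref{cor.weights_describe_weight-monoid1}. In the complementary exceptional case $G=T$ and $X_{\aff}\simeq\AA^1\times(\AA^1\setminus\{0\})^{\dim X-1}$, the defining cone $\sigma$ of $X_{\aff}$ is, up to a lattice automorphism, the single ray $\RR_{\geq 0}e_1$, so $\sigma^\vee=\{u:u_1\geq 0\}$ and Corollary~\ref{cor.Weights} yields $D(X)_\infty=\sigma^\vee\setminus\inter(\sigma^\vee)=\{u:u_1=0\}$, a hyperplane; since $D(X)$ generates $M$ we have $\Span_\RR(D(X))=\frak{X}(T)_\RR$, giving the strict inequality $\dim\Conv(D(X)_\infty)<\dim\Span_\RR(D(X))$. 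As $D(X)\subset S_\rho=\{u:u_1=-1\}$ lies in $\{u_1<0\}$, the half-space $H^+$ not meeting $D(X)$ is $\{u_1\geq 0\}$, and $H^+\cap\Span_\ZZ(D(X))=\sigma_M^\vee=\Lambda^+(X)$, which is part~\eqref{cor.weights_describe_weight-monoid2}.

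Since the first subcase produces equal dimensions and the second strictly unequal ones, the dimension hypotheses of \eqref{cor.weights_describe_weight-monoid1} and \eqref{cor.weights_describe_weight-monoid2} are exactly the detectors selecting these two subcases, so all three parts follow. For the concluding assertion I observe that emptiness of $D(X)$ and the comparison between $\dim\Conv(D(X)_\infty)$ and $\dim\Span_\RR(D(X))$ depend only on $D(X)$, and that in each case $\Lambda^+(X)$ is given by a formula in $D(X)$ alone (in the empty case both $X$ and $Y$ are tori with $\Lambda^+=\frak{X}(T)$). Hence $D(Y)=D(X)$ forces $\Lambda^+(Y)=\Lambda^+(X)$. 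The main obstacle is the exceptional case, where Theorem~\ref{thm.description_of_weight-monoid} gives no information and one must read off $D(X)_\infty$, $\Lambda^+(X)$ and especially the correct orientation of $H^+$ from the ray description of $\sigma$ — the orientation hinging on the observation that every weight in $D(X)$ sits on the affine slice $S_\rho$.
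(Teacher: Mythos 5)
Your proof is correct and follows essentially the same route as the paper: split off the torus case (where $D(X)=\varnothing$ and $\Lambda^+(X)=\frak{X}(T)$), invoke Theorem~\ref{thm.description_of_weight-monoid} in the generic case to land in part~\eqref{cor.weights_describe_weight-monoid1}, and in the exceptional case $G=T$, $X_{\aff}\simeq \AA^1\times(\AA^1\setminus\{0\})^{\dim X-1}$ use the single-ray description of $\sigma$ together with Corollary~\ref{cor.Weights}, Corollary~\ref{cor.Weights_rho} and Proposition~\ref{prop.DX_generates_M_as_a_group} to identify $\sigma^\vee$ as the half-space $H^+$ of part~\eqref{cor.weights_describe_weight-monoid2}. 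Your additional touches (the explicit argument that a torus admits no non-trivial $\GG_a$-action, and the remark that $D(X)_\infty\subset\Span_{\RR}(D(X))$ makes the trichotomy exhaustive) are correct but only make explicit what the paper's proof uses implicitly.
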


\begin{proof}
	If $X$ is a torus, then $D(X)$ is empty. In particular, $G = T$ by Lemma~\ref{lem.B_contains_unipotent_elements} and thus $X \simeq T$. Hence, $\Lambda^+(X) = \frak{X}(T)$ and we are 
	in case~\eqref{cor.weights_describe_weight-monoid3}. Thus we may assume that $X$ is not a torus.
	
	If $G \neq T$ or $X_{\aff} \not\simeq \AA^1 \times (\AA^1 \setminus \{0\})^{\dim(X)-1}$, then 
	the statement is a direct consequence of Theorem~\ref{thm.description_of_weight-monoid} and we 
	are in case~\eqref{cor.weights_describe_weight-monoid1}.
	
	Thus we may assume that $G = T$ and 
	$X_{\aff} \simeq \AA^1 \times (\AA^1 \setminus \{0\})^{\dim(X)-1}$. In particular, $D(X)$ is non-empty
	and by Proposition~\ref{prop.DX_generates_M_as_a_group} we get $\frak{X}(T) = \Span_{\ZZ}(D(X))$.
	Denote by
	$\sigma \subset \Hom_{\ZZ}(\frak{X}(T), \RR)$ the closed strongly 
	convex rational polyhedral cone that describes $X_{\aff}$.
	In this case $\sigma$ is a single ray and thus
	$\sigma^\vee$ is a closed half space in $\frak{X}(T)_\RR$.
	As $D(X)_\infty = \sigma^\vee \setminus \inter(\sigma^\vee)$ (see Corollary~\ref{cor.Weights}),
	it follows that $D(X)_\infty$ is a hyperplane in $\Span_{\RR}(D(X))$.
	By definition $\Lambda^+(X) = \sigma^\vee \cap \Span_{\ZZ}(D(X))$ and
	$\sigma^\vee$ is in fact the closed half space with boundary
	$D(X)_\infty$ that does not intersect $D(X)$ (see Corollary~\ref{cor.Weights_rho}).
	In particular, $\dim \Conv(D(X)_\infty) < \dim T = \dim \Span_{\RR}(D(X))$
	and thus we are in case~\eqref{cor.weights_describe_weight-monoid2}.
\end{proof}

As a consequence of Corollary~\ref{cor.weights_describe_weight-monoid} we prove that
for a $G$-spherical variety $X$ the weight monoid $\Lambda^+(X) \subseteq \frak{X}(B)$ 
is determined by its automorphism group.

\begin{corollary}\label{thesameweights}
	\label{cor.Autos_describe_weight_monoids}
	Let $X, Y$ be irreducible quasi-affine normal varieties. Assume that 
	$X$ is $G$-spherical, $X$ is different from an algebraic torus and that
	there exists an isomorphism of groups 
	$\theta \colon \Aut(X) \simeq \Aut(Y)$ that preserves 
	algebraic subgroups.
	Then $Y$ is $G$-spherical for the $G$-action induced by $\theta$ 
	and $\Lambda^+(X) = \Lambda^+(Y)$.
\end{corollary}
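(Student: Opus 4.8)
The plan is to split the statement into its two assertions and handle each by invoking the machinery already developed. The first assertion---that $Y$ is $G$-spherical for the $G$-action induced by $\theta$---is precisely Proposition~\ref{prop.iso_and_sphericity}, so I would simply cite it: the hypotheses there (that $X$ is irreducible, quasi-affine, normal, $G$-spherical and not isomorphic to a torus, and that $\theta$ preserves algebraic subgroups) are exactly the ones assumed here. For the second assertion, $\Lambda^+(X) = \Lambda^+(Y)$, the strategy is to reduce it to the invariance of the weight set $D(X)$ under $\theta$, and then to quote Corollary~\ref{cor.weights_describe_weight-monoid}, whose final sentence asserts that two quasi-affine $G$-spherical varieties with the same $D$-set have the same weight monoid.

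The heart of the argument is therefore to prove $D(X) = D(Y)$, where $D(Y)$ is formed with respect to the $B$-action on $Y$ induced by $\theta$. I would argue this via the correspondence between nontrivial $B$-homogeneous $\GG_a$-actions and root subgroups. Fix $\lambda \in D(X)$ and choose a nontrivial $B$-homogeneous $\GG_a$-action on $X$ of weight $\lambda$; its image is a root subgroup $U_0 \subset \Aut(X)$ with respect to $B$ of weight $\lambda$ (by the definition of a root subgroup in Sect.~\ref{sec.AutomorphismGroups}). Because the $G$-action on $Y$ is defined as the composite $G \to \Aut(X) \to \Aut(Y)$ through $\theta$, the triangle relating $B$ to $\Aut(X)$ and $\Aut(Y)$ commutes, so Lemma~\ref{lem.auto_perserves_weights} applies with $H = B$: the subgroup $\theta(U_0)$ is either trivial or a root subgroup with respect to $B$ of weight $\lambda$. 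Since $\theta$ is a group isomorphism, $\theta(U_0)$ is nontrivial, hence it is a root subgroup of weight $\lambda$ and yields a nontrivial $B$-homogeneous $\GG_a$-action on $Y$ of weight $\lambda$; thus $\lambda \in D(Y)$. This gives $D(X) \subseteq D(Y)$, and running the same argument with $\theta^{-1}$ (which also preserves algebraic subgroups) gives the reverse inclusion, so $D(X) = D(Y)$.

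With $D(X) = D(Y)$ in hand, the final sentence of Corollary~\ref{cor.weights_describe_weight-monoid} immediately yields $\Lambda^+(X) = \Lambda^+(Y)$, completing the proof. The only genuinely delicate point I anticipate is the bookkeeping in the previous paragraph: one must make sure that the induced $B$-action on $Y$ is exactly the one entering into the definition of $D(Y)$ and into Lemma~\ref{lem.auto_perserves_weights}, and that the nontriviality of $\theta(U_0)$ is correctly deduced from $\theta$ being an isomorphism rather than merely a homomorphism. Everything else is a direct appeal to the cited results, so I expect no serious technical obstacle beyond this verification.
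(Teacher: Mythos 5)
Your proof is correct and follows exactly the paper's own argument: sphericity of $Y$ via Proposition~\ref{prop.iso_and_sphericity}, the equality $D(X)=D(Y)$ via Lemma~\ref{lem.auto_perserves_weights} (applied to both $\theta$ and $\theta^{-1}$), and then the final sentence of Corollary~\ref{cor.weights_describe_weight-monoid}. The only difference is that you spell out the root-subgroup bookkeeping and the nontriviality of $\theta(U_0)$, which the paper compresses into a single citation.
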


\begin{proof}
	The first claim follows from  Proposition~\ref{prop.iso_and_sphericity}. 
	To show that $\Lambda^+(X) = \Lambda^+(Y)$ let us denote by
	 $D(X), D(Y) \subset \frak{X}(B)$  the set of $B$-weights of non-trivial $B$-homo\-geneous $\GG_a$-actions
	on $X$ and $Y$ respectively. We get $D(X) = D(Y)$ from Lemma~\ref{lem.auto_perserves_weights}.
	Now, Corollary~\ref{cor.weights_describe_weight-monoid} implies $\Lambda^+(X) = \Lambda^+(Y)$.
\end{proof}

\begin{theorem}
	\label{thm.smooth_spherical_var}
	Let $X$ and $Y$ be irreducible normal affine varieties. Assume that 
	$X$ is $G$-spherical and that $X$ is not isomorphic 
	to a torus. Moreover, we assume that there is an isomorphism  of groups 
	$\theta \colon \Aut(X) \simeq \Aut(Y)$ that preserves 
	algebraic subgroups. We consider $Y$ as a $G$-variety by the induced action via $\theta$. Then
	$X$, $Y$ are isomorphic as $G$-varieties, provided one of the following statements holds
	\begin{itemize}
		\item $X$ and $Y$ are smooth or
		\item $G = T$ is a torus.
 	\end{itemize}
\end{theorem}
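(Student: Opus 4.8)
The plan is to reduce the statement to the equality of the weight monoids of $X$ and $Y$, which is already at our disposal, and then to deduce a $G$-equivariant isomorphism from this equality by rather different means in the two listed cases. By Corollary~\ref{cor.Autos_describe_weight_monoids} the induced $G$-action turns $Y$ into a $G$-spherical variety and we have $\Lambda^+(X) = \Lambda^+(Y)$ inside $\frak{X}(B)$. Thus $X$ and $Y$ are irreducible normal affine $G$-spherical varieties sharing the same weight monoid, and it only remains to upgrade this equality to an isomorphism of $G$-varieties.

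In the toric case $G = T$ I would argue directly and elementarily. Here $X$ and $Y$ are normal affine $T$-toric varieties and $B = T$, so the weight monoids are computed from $T$-semi-invariants. Since $T$ acts with a dense orbit, each $T$-weight space of $\OO(X)$ and of $\OO(Y)$ is at most one-dimensional, and hence the coordinate rings are the semigroup algebras of the respective weight monoids,
\[
	\OO(X) \simeq \kk[\Lambda^+(X)] \quad\text{and}\quad \OO(Y) \simeq \kk[\Lambda^+(Y)]
\]
as $T$-algebras. The equality $\Lambda^+(X) = \Lambda^+(Y)$ therefore produces a $T$-equivariant algebra isomorphism $\OO(X) \simeq \OO(Y)$, and taking $\Spec$ yields the desired $T$-equivariant isomorphism $X \simeq Y$.

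In the smooth case the plan is to invoke the uniqueness theorem for smooth affine spherical varieties: a smooth affine $G$-spherical variety is determined, up to $G$-equivariant isomorphism, by its weight monoid. This rigidity rests on Losev's uniqueness property for spherical homogeneous spaces \cite{Lo2009Uniqueness-propert} together with the classification of smooth affine spherical varieties by Knop--Van Steirteghem. Since $X$ and $Y$ are both smooth affine $G$-spherical with $\Lambda^+(X) = \Lambda^+(Y)$, this theorem immediately gives $X \simeq Y$ as $G$-varieties.

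I expect the smooth case to be the main obstacle, since it depends essentially on this deep uniqueness result rather than on an explicit construction. The smoothness hypothesis is genuinely needed and cannot simply be dropped: the affine spherical varieties sharing a fixed weight monoid are parametrized by a moduli scheme (Alexeev--Brion) that is in general positive-dimensional, so without smoothness the weight monoid alone does not pin down the variety up to $G$-isomorphism.
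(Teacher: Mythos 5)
Your proof is correct and follows essentially the same route as the paper: Corollary~\ref{cor.Autos_describe_weight_monoids} yields that $Y$ is $G$-spherical with $\Lambda^+(X)=\Lambda^+(Y)$, the toric case is the classical fact that a normal affine toric variety is the spectrum of the semigroup algebra of its (saturated) weight monoid, and the smooth case is settled by the uniqueness of a smooth affine spherical variety with given weight monoid. One correction on attribution: that uniqueness statement is precisely Knop's conjecture, proved by Losev in \cite[Theorem~1.3]{Lo2009Proof-of-the-Knop-}, which is the reference to invoke here rather than the uniqueness property for spherical homogeneous spaces \cite{Lo2009Uniqueness-propert}, and no appeal to the Knop--Van Steirteghem classification is needed.
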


\begin{proof}
	By Corollary \ref{cor.Autos_describe_weight_monoids}, $Y$ is $G$-spherical
	and  the weight monoids
	$\Lambda^+(X)$ and $\Lambda^+(Y)$  coincide.
	In case $X$ and $Y$ are smooth, the statement follows now from the affirmative
	answer of \name{Knop}'s Conjecture, see \cite[Theorem~1.3]{Lo2009Proof-of-the-Knop-}.
	In case $G$ is a torus, it is classical, that from the weight monoid $\Lambda^+(X)$
	one can reconstruct the toric variety $X$ up to $G$-equivariant isomorphisms,
	see e.g. Fulton \cite[\S1.3]{Fu1993Introduction-to-to}.
\end{proof}

\section{A counter example}
\label{sec.Counter_example}

For th rest of this article, we give an example which shows that we cannot drop the normality condition
in Main Theorem~\ref{mainthm:autos}. The example is borrowed from \cite{Re2017Characterization-o}.

Let $\mu_d \subset \kk^\ast$ be the 
finite cyclic subgroup of order $d$ and let it act on $\mathbb{A}^n$ via 
$t \cdot (x_1, \ldots, x_n) = (t x_1, \ldots t x_n)$. The algebraic quotient 
$\mathbb{A}^n/\mu_d$ has the coordinate ring 
\[
	\mathcal{O}(\mathbb{A}^n/\mu_d) = \bigoplus_{k \ge 0} \kk[x_1,\dots,x_n]_{kd}
	\subset \kk[x_1, \ldots, x_n] \, , 
\]
where 
$\kk[x_1,\dots,x_n]_{i} \subset \kk[x_1, \ldots, x_n]$ 
denotes the subspace of homogeneous polynomials of degree $i$. For each $s \geq 2$, let
\[
	A_{d, n}^s = \Spec\left(\kk \oplus  \bigoplus_{k \ge s} \kk[x_1,\dots,x_n]_{kd} \right) \, .
\]

\begin{lemma}
	\label{lem.ex_normality}
	For $n, d, s \geq 2$ and the algebraic quotient $\pi \colon \AA^n \to \AA^n / \mu_d$ holds:
	\begin{enumerate}
		\item \label{lem.ex_normality1} 
		The variety
		$\mathbb{A}^n/\mu_d$ is $\SL_n(\kk)$-spherical for the natural $\SL_n(\kk)$-action
		on $\AA^n$ and $\AA^n / \mu_d$ is smooth outside $\pi(0, \ldots, 0)$;
		%the singular locus of $\AA^n / \mu_d$ is equal to $\{ \pi(0, \ldots, 0) \}$;
		\item \label{lem.ex_normality2} 
		There is an $\SL_n(\kk)$-action on  $A_{d, n}^s$ such that the morphism $\eta \colon \mathbb{A}^n/\mu_d \to A_{d, n}^s$ which is induced by the natural
		inclusion $\OO(A_{d. n}^s) \subset \OO(\mathbb{A}^n/\mu_d)$ is 
		$\SL_n(\kk)$-equivariant. Moreover, $\eta$ is the normalization map and it is bijective;
		\item \label{lem.ex_normality3} 
		The natural group homomorphism $\Aut(A_{d, n}^s) \to \Aut(\mathbb{A}^n/\mu_d)$
		is a group isomorphism that preserves algebraic subgroups;		
		\item \label{lem.ex_normality4}
		The variety $A_{d, n}^s$ is not normal;
		\item \label{lem.ex_normality5}
		The weight monoids $\Lambda^+(A_{d, n}^s)$ and $\Lambda^+(\mathbb{A}^n/\mu_d)$ inside $\frak{X}(B)$ are distinct when we fix
		a Borel subgroup $B \subset \SL_n(\kk)$.
	\end{enumerate}
\end{lemma}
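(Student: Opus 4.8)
The plan is to work throughout with the two graded rings $R := \OO(\AA^n/\mu_d) = \bigoplus_{k \geq 0}\kk[x_1,\dots,x_n]_{kd}$ and $A := \OO(A_{d,n}^s) = \kk \oplus \bigoplus_{k \geq s}\kk[x_1,\dots,x_n]_{kd}$, writing $R_+$, $A_+$ for their irrelevant ideals and using that the $\SL_n$-action preserves the total degree. For~\eqref{lem.ex_normality1} I would first recall that $\AA^n$ is $\SL_n$-spherical: for the upper triangular Borel $B$ the orbit $B\cdot e_n$ consists of all vectors with nonzero last coordinate, hence is open and dense. As $\pi\colon\AA^n\to\AA^n/\mu_d$ is finite, surjective and $\SL_n$-equivariant, the image of this orbit is a dense locally closed $B$-orbit, hence open, so $\AA^n/\mu_d$ is $\SL_n$-spherical (if one insists on a faithful action one divides $\SL_n$ by the finite central kernel $\mu_{\gcd(n,d)}$, which alters none of the orbit or weight data used below). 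Smoothness away from $\pi(0,\dots,0)$ holds because $\mu_d$ acts freely on $\AA^n\setminus\{0\}$, while the cone point is genuinely singular for $d\geq2$.

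For~\eqref{lem.ex_normality2} the $\SL_n$-action preserves each graded piece, hence the subalgebra $A\subset R$, so $\eta$ is $\SL_n$-equivariant. To identify $\eta$ with the normalization I would note that $R$ is normal (invariants of the normal ring $\kk[x_1,\dots,x_n]$ under the finite group $\mu_d$); that any homogeneous $r\in R_{kd}$ satisfies $r^N\in A$ for $N\gg0$, giving the monic relation $t^N-r^N$ with coefficients in $A$, so $R$ is integral and, being finitely generated, finite over $A$; and that $\operatorname{Frac}(A)=\operatorname{Frac}(R)$, since for any $r\in R$ and any nonzero $a\in A_{sd}$ one has $ra\in A$ and thus $r=(ra)/a$. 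Hence $R$ is the integral closure of $A$. Bijectivity I would get by localizing: for $f\in A_{sd}=R_{sd}$ one checks $A_f=R_f$ (if $r\in R_{kd}$ then $rf\in A$, so $r=(rf)/f\in A_f$), and these sets cover the complement of the vertex, so $\eta$ is an isomorphism over $\Spec A\setminus V(A_+)$; the fibre over the vertex is $V(A_+R)$, and since $A_+=\bigoplus_{k\geq s}R_{kd}$ has radical $R_+$ in $R$, this fibre is the single vertex of $R$. Statement~\eqref{lem.ex_normality4} is then immediate, as $A\neq R$ (for instance $R_d\not\subset A$) while $R$ is the integral closure of $A$.

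The crux is~\eqref{lem.ex_normality3}, and the decisive point is that the \emph{conductor} of $A\subset R$ is a power of the maximal ideal of the cone point. A direct computation of $\mathfrak c=\{a\in A:aR\subset A\}$ gives $\mathfrak c=\bigoplus_{k\geq s}R_{kd}$, and since $R_+$ is generated by $R_d$ and every degree $kd$ with $k\geq s$ splits as $(s-1)d+(k-s+1)d$, one obtains $\mathfrak c=R_+^{\,s}$ and hence $A=\kk\cdot1\oplus R_+^{\,s}$. Now any $\psi\in\Aut(\AA^n/\mu_d)$ fixes the unique singular point $\pi(0,\dots,0)$ by~\eqref{lem.ex_normality1}, so $\psi^{*}$ preserves $R_+$, therefore $R_+^{\,s}$, and therefore $A=\kk\cdot1\oplus R_+^{\,s}$; thus $\psi$ descends to an automorphism of $A_{d,n}^s$. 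Conversely, every automorphism of $A_{d,n}^s$ extends uniquely to its normalization $\AA^n/\mu_d$ by functoriality of normalization, and the two assignments are mutually inverse; the homomorphism is injective because $\eta$ is dominant. That it preserves algebraic subgroups follows because normalization carries a regular $H$-action to a regular $H$-action and, conversely, a regular action of an algebraic group $K\subset\Aut(\AA^n/\mu_d)$ fixes the singular point pointwise, hence stabilises $A=\kk\cdot1\oplus R_+^{\,s}$, so restricts to a regular $K$-action on $\Spec A$. I expect this surjectivity, namely descending automorphisms of the normal variety to the non-normal one, to be the main obstacle, and the identity $\mathfrak c=R_+^{\,s}$ is what makes it work.

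Finally, for~\eqref{lem.ex_normality5} I would compute both monoids explicitly. Each graded piece $\kk[x_1,\dots,x_n]_{j}=\operatorname{Sym}^{j}(\kk^n)^{*}$ is an irreducible $\SL_n$-module, so its space of $B$-semi-invariants is one-dimensional of weight $j\mu$, where $\mu\neq0$ is the $B$-weight of the semi-invariant in degree $1$. Reading off the degrees occurring in $R$ and in $A$ then yields $\Lambda^+(\AA^n/\mu_d)=\ZZ_{\geq0}\,(d\mu)$ and $\Lambda^+(A_{d,n}^s)=\{0\}\cup\{\,kd\mu:k\geq s\,\}$; since $s\geq2$, the weight $d\mu$ lies in the former but not in the latter, so the two weight monoids are distinct.
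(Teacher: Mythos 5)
Your proposal is correct, and on parts \eqref{lem.ex_normality1}, \eqref{lem.ex_normality2}, \eqref{lem.ex_normality4} and \eqref{lem.ex_normality5} it runs essentially parallel to the paper (the paper gets smoothness and bijectivity of $\eta$ geometrically from transitivity of $\SL_n(\kk)$ on $\AA^n\setminus\{0\}$, you get them from freeness of the $\mu_d$-action and from localizations $A_f=R_f$ plus the fibre over the vertex; part \eqref{lem.ex_normality5} is the same computation, phrased via irreducibility of $\kk[x_1,\dots,x_n]_j$ instead of writing down the $U$-invariants $x_n^{kd}$). The real divergence is in part \eqref{lem.ex_normality3}, which you rightly call the crux. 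The paper proves surjectivity of $\Aut(A_{d,n}^s)\to\Aut(\AA^n/\mu_d)$ by viewing $\AA^n\to\AA^n/\mu_d$ as the Cox realization of the toric variety $\AA^n/\mu_d$ (Arzhantsev--Ga\u{\i}fullin, using $n\geq 2$), lifting a given automorphism to an automorphism $\psi$ of $\AA^n$ by Berchtold's theorem, showing by a degree argument that $\psi$ is $\mu_d$-equivariant and hence that $\psi^\ast$ preserves $\OO(A_{d,n}^s)$, and then using Kaliman's lemma (a bijective endomorphism is an automorphism) and Furter--Kraft to conclude that $\theta$ is an ind-group isomorphism. Your conductor argument replaces all of this machinery: from $\mathfrak{c}=A_+=R_+^{\,s}$ you obtain the intrinsic description $A=\kk\cdot 1\oplus R_+^{\,s}$, every automorphism of $\AA^n/\mu_d$ fixes the unique singular point and so its pullback preserves $R_+$, hence preserves $A$; this produces the inverse of $\theta$ directly, with no Cox rings, no lifting to $\AA^n$, and no appeal to Kaliman, and it makes ``preserves algebraic subgroups'' transparent in both directions. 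What the paper's route buys is independence from any singularity analysis and an explicit lift to $\AA^n$; what yours buys is a short, self-contained commutative-algebra proof. Two assertions in your write-up should be made explicit, though neither is a genuine gap: (i) your argument really needs that $\pi(0,\dots,0)$ is \emph{singular} (otherwise automorphisms need not fix it); this is one line, since the embedding dimension there is $\dim_{\kk} R_+/R_+^2=\dim_{\kk}\kk[x_1,\dots,x_n]_d=\binom{n+d-1}{d}>n$ for $n,d\geq 2$ (alternatively, the scalar action of $\mu_d$ has no pseudo-reflections); (ii) that the descended action of an algebraic subgroup $K\subset\Aut(\AA^n/\mu_d)$ on $\Spec A$ is again \emph{regular}: pointwise invariance of $A$ under $K$ implies that the coaction maps $A$ into $\OO(K)\otimes A$ by the usual linear-independence argument, exactly as in Proposition~\ref{prop.locally_finite_and_rational}.
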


\begin{proof}
	\eqref{lem.ex_normality1}: As the natural $\SL_n(\kk)$-action on $\AA^n$ commutes with the $\mu_d$-action,
	we get an induced $\SL_n(\kk)$-action on $\AA^n / \mu_d$ such that $\pi$ is $\SL_n(\kk)$-equivariant
	and $\AA^n / \mu_d$ is $\SL_n(\kk)$-spherical. As $\SL_n(\kk)$ acts transitively
	on $\AA^n \setminus \{ 0 \}$, the projection $\pi$ induces a finite \'etale morphism 
	$\AA^n \setminus \{(0, \ldots, 0) \} \to (\AA^n / \mu_d) \setminus \{ \pi(0, \ldots, 0 )\}$. This shows
	that $(\AA^n / \mu_d) \setminus \{ \pi(0, \ldots, 0 )\}$ is smooth. 
%	The maximal ideal 
%	\[
%		\mathfrak{m} = \bigoplus_{k \geq 1} \kk[x_1, \ldots, x_n]_{kd} \subset \OO(\AA^n / \mu_d)
%	\]
%	corresponding to
%	$\pi(0, \ldots, 0)$ satisfies $\mathfrak{m} / \mathfrak{m}^2 = \kk[x_1, \ldots, x_n]_{d}$
%	and since  $n, d \geq 2$, we get that $\pi(0, \ldots, 0)$ is a singularity of $\AA^n / \mu_d$.
%	

	\eqref{lem.ex_normality2}: 
	As $\SL_n(\kk)$ acts linearly on $\AA^n$, we get an $\SL_n(\kk)$-action on 
	$A_{d, n}^s$ such that $\eta \colon \mathbb{A}^n/\mu_d \to A_{d, n}^s$ is $\SL_n(\kk)$-equivariant.
	
	As $\AA^n$ is normal, the algebraic quotient $\AA^n / \mu_d$ is normal.
	As $\OO(A_{d, n}^s)$ has finite codimension in $\OO(\AA^n / \mu_d)$, the ring
	extension $\OO(A_{d, n}^s) \subset \OO(\AA^n / \mu_d)$
	is an integral. Moreover, for each 
	monomial $f \in \kk[x_1, \ldots, x_n]$ of degree $sd$, we get an
	equality by localizing, namely $\OO(A_{d, n}^s)_f = \OO(\AA^n / \mu_d)_f$, and thus $\eta$ is birational.
	This shows that $\eta$ is the normalization map.
	Moreover, as $\eta$ is $\SL_n(\kk)$-equivariant
	and as $\SL_n(\kk)$ acts transitively on $(\AA^d / \mu_d) \setminus \{ \pi(0, \ldots, 0) \}$, we get
	that $A_{d, n}^s \setminus \{ \eta(\pi(0, \ldots, 0))\}$ is smooth and as $\eta$ is the normalization,
	it is an isomorphism over the complement of $\eta(\pi(0, \ldots, 0))$.
	Moreover, $\eta^{-1}(\eta(\pi(0, \ldots, 0))) = \{ \pi(0, \ldots, 0) \}$ and thus $\eta$ is bijective.
	
	\eqref{lem.ex_normality3}:
	Each automorphism of $A_{d, n}^s$ lifts uniquely to an automorphism of $\AA^n / \mu_d$ via
	the normalization $\AA^n / \mu_d \to A_{d, n}^s$ and hence we get an
	injective group homomorphism $\theta \colon \Aut(A_{d,n}^s) \to \Aut(\mathbb{A}^n/\mu_d)$,
	see \cite{Ra1964A-note-on-automorp}.
	
	Now we prove that $\theta$ is surjective. For this, let $\varphi \in \Aut(\AA^n / \mu_d)$. 
	As $n \geq 2$, 
	the algebraic quotient $\AA^n \to \AA^n/\mu_d$ is in fact the Cox realization of the toric variety
	$\AA^n/\mu_d$ (see~\cite[Theorem 3.1]{ArGa2010Cox-rings-semigrou}). 
	By~\cite[Corollary 2.5, Lemma~4.2]{Be2003Lifting-of-morphis},
	$\varphi$ lifts via $\AA^n \to \AA^n / \mu_d$ to an automorphism $\psi$ of $\AA^n$ 
	and there is an integer  $c \geq 1$ which is coprime to $d$ such that for each $t \in \mu_d$
	and each $(a_1, \ldots, a_n) \in \AA^n$ we have
	\[
		\psi(t a_1, \ldots, t a_n) = t^c \psi(a_1, \ldots, a_n) \, .
	\]
	This implies that for each $i \in \{ 1, \ldots, n \}$,
	\[
		\psi^\ast(x_i) \in \bigoplus_{k\geq0} \kk[x_1,\dots,x_n]_{kd+c} \, .
	\]
	As $\psi$ is an automorphism of $\AA^n$, we get $c = 1$ and thus
	$\psi$ is $\mu_d$-equivariant (see also \cite[Proposition~4]{Re2017Characterization-o}).
	Hence, $\psi^\ast \colon \kk[x_1, \ldots,x_n] \to \kk[x_1, \ldots, x_n]$ maps
	$\OO(A_{d, n}^s)$ onto itself and by construction restricts to $\varphi^\ast$ on $\OO(\AA^n / \mu_d)$.
	Therefore, there is an endomorphism $\tilde{\varphi} \colon A_{d, n}^s \to A_{d, n}^s$
	that induces $\varphi \in \Aut(\AA^n / \mu_d)$
	via the normalization map $\eta \colon \AA^n / \mu_d \to A_{d, n}^s$. 
	As $\eta$ and $\varphi$ are bijective, 
	$\tilde{\varphi}$ is bijective as well;
	hence $\tilde{\varphi}$ is an automorphism of $A_{d, n}^s$ by \cite[Lemma~1]{Ka2005On-a-theorem-of-Ax} and thus $\theta$ is surjective.
	
	Since $\theta \colon \Aut(A_{d, n}^s) \to  \Aut(\AA^n / \mu_d)$ is a group isomorphism 
	and as it is induced by the normalization map $\AA^n / \mu_d \to A_{d, n}^s$, it follows that $\theta$
	is an isomorphism of ind-groups, see~\cite[Proposition 12.1.1]{FuKr2018On-the-geometry-of}. In particular, $\theta$ is a group isomorphism that
	preserves algebraic subgroups.
	
	\eqref{lem.ex_normality4}: The normalization map $\AA^n / \mu_d \to A_{d, n}^s$ is
	not an isomorphism, since the inclusion $\OO(A_{d, n}^s) \subset \OO(\AA^n / \mu_d)$ is proper
	(note that $s \geq 2$).
	
	\eqref{lem.ex_normality5}:
	We may assume hat $B \subset \SL_n(\kk)$ is the Borel subgroup of upper triangular matrices.
	Denote by $U \subset B$ the unipotent radical, i.e. the upper triangular matrices with $1$ on the diagonal.
	Then the subrings of $U$-invariant functions satisfy
	$\OO(\AA^n / \mu_d)^U = \bigoplus_{k \geq 0} \kk x_n^{kd}$ and 
	$\OO(A_{d, n}^s)^U = \kk \oplus \bigoplus_{k \geq s} \kk x_n^{kd}$. Denote by
	$\chi_n \colon B  \to \GG_m$ the character which is the projection to the entry $(n, n)$.
	Then we get
	\[
		\Lambda^+(\AA^n / \mu_d) = \set{\chi_n^{kd}}{k \geq 0} \quad \textrm{and} \quad
		\Lambda^+(A_{d, n}^s) = \set{\chi_n^{kd}}{k = 0 \ \textrm{or} \ k \geq s}
	\]
	inside $\frak{X}(B)$ and as $s \geq 2$, these monoids are distinct.
\end{proof}

\par\bigskip
\renewcommand{\MR}[1]{}
\bibliographystyle{amsalpha}

\providecommand{\bysame}{\leavevmode\hbox to3em{\hrulefill}\thinspace}
\providecommand{\MR}{\relax\ifhmode\unskip\space\fi MR }
% \MRhref is called by the amsart/book/proc definition of \MR.
\providecommand{\MRhref}[2]{%
  \href{http://www.ams.org/mathscinet-getitem?mr=#1}{#2}
}
\providecommand{\href}[2]{#2}

\end{document}